\theoremstyle{plain}
\def\Pb{{\mathbb P}}
\def\Fb{{\mathbb F}}
\def\Zb{{\mathbb Z}}
\def\Wc{{\mathcal W}}
\def\Xc{{\mathcal X}}
\def\Yc{{\mathcal Y}}
\def\C~{\widetilde{C}}
\newtheorem{theorem}{Theorem}[section]
\newtheorem{definition}[theorem]{Definition}
\newtheorem{lemma}[theorem]{Lemma}
\newtheorem{corollary}[theorem]{Corollary}
\newtheorem{solution*}{Solution}
\newtheorem{proposition}[theorem]{Proposition}
\newtheorem{remark}[theorem]{Remark}
\numberwithin{equation}{section}
\newcommand{\PP}{\Pb^1\times\Pb^1}
\newcommand{\bmu}{\scriptsize $\bar{\mu}$}
\newcommand{\s}[1]{\scriptsize$#1$} 
\newcommand{\ti}[1]{\tiny$\bar{#1}$}
\DeclareMathOperator{\Hom}{\mathscr{H}\text{\kern -3pt {\calligra\large om}}\,}
\DeclarePairedDelimiter\ceil{\lceil}{\rceil}
\title{\bf Classification of Del Pezzo Orders with Canonical Singularities}
\author{ A. Nasr\thanks{Carleton University, School of Mathematics and Statistics,
      Ottawa, ON, Canada,\, Email: amir.nasr@carleton.ca}  \qquad
    }
\date{\today}
\begin{document}
\maketitle

\abstract{
We classify del Pezzo non-commutative surfaces that are finite over their centres and have no worse than canonical singularities. Using the minimal model program, we introduce the minimal model of such surfaces. We first classify the minimal models and then give the classification of these surfaces in general.  This presents a complementary result and method to the classification of del Pezzo orders over projective surfaces given by Chan and Kulkarni in 2003.}

%%%%%%%%%%%%%%%%%%%%%%%%%%%%%%%%%%%%%%%%%%%%%%%%%%
\section{Introduction}
In modern algebraic geometry, non-commutative algebraic varieties are considered.  One open problem concerning non-commutative varieties is classification of non-commutative surfaces.  A nice subclass of non-commutative surfaces is the ones which are finite over their centres. Such a non-commutative surface is called an order or more precisely an order over its centre.

In the present work, we are interested in classifying  del Pezzo orders with canonical singularities. A del Pezzo order is a generalization of the notion being del Pezzo for commutative surfaces  that was introduced by  Pasquale del Pezzo in 1887. These are the surfaces with ample anti-canonical bundles.  In 2003 Chan and Kulkarni showed that if an order over a nice enough centre is del Pezzo, then the centre is a del Pezzo surface, \cite{chan2003pezzo}. Using this result, they classified del Pezzo orders over normal Gorenstein projective surfaces. After that, Chan and Ingalls generalized the terminology of minimal model program to orders  and classified minimal terminal orders, \cite{CI2}.  Using the fact that blowups of orders with terminal singularities will only have terminal singularities, we find all blowups of the minimal models which preserve the canonical bundle being ample. From now on, we call orders with terminal (canonical) singularities terminal (canonical) orders. Further, we write a terminal del Pezzo order as TdPO and a canonical del Pezzo order as CdPO.

\begin{theorem}
Let $\Xc$ be a TdPO over a smooth surface $S$ with ramification divisors $D=\cup D_i$ and  ramification degrees $e_i$. Then one of the following occurs
\begin{enumerate}
\item
$S=\Pb^2$, $3\leq\deg D\leq 5$. Ramification degrees $e_i$ are all equal, say $e$. Further, $e=2$ when $\deg D=5$ and $e=2$ or $e=3$ when $\deg D=4$
\item 
There is a sequence of blowups $f:S\rightarrow \Pb^2$ at a set of points $\Sigma=\{p_1, \cdots, p_n\}\subset \Pb^2$, where
\begin{itemize}
 \item 
 $\deg f_*D=3$,  $n<9$, and $\Sigma\subset f_*D$ is in  general position; or
 \item
 $\deg f_*D=3$, $n=1$, $p_1\notin f_*D$, and $e_i=2$; or
 \item
 $\deg f_*D=4$, $n=1$, $p_1\in f_*D$, and $e_i=2$.
 \end{itemize}
 \item 
 $S=\PP$ and if $D=aC_0+bF$ for crossing fibres $C_0$ and $F$, then $2\leq a,b\leq3$ and ramification degrees are all equal and, unless $D\sim 2C_0+2F$, they are all $2$.
 \item
 There is a sequence of blowups $f:S\rightarrow \PP$ at a set of points $\Sigma=\{p_1, \cdots, p_n\}\subset \PP$, where $f_*D$ has bi-degree $(2,2)$,  $n<8$, and $\Sigma\subset f_*D$ is in  general position.
\end{enumerate}
\end{theorem}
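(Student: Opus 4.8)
The plan is to reduce the classification of terminal del Pezzo orders (TdPOs) to the known classification of minimal terminal orders due to Chan and Ingalls, and then to run the anti-canonical ampleness condition through every blowup that can be applied to a minimal model. Concretely, I would first invoke the result of Chan and Kulkarni recalled in the introduction: since $\Xc$ is del Pezzo, its centre $S$ is a del Pezzo surface. Because $\Xc$ is terminal, the minimal model program for orders applies, so there is a birational morphism $f\colon S\to S_{\min}$ contracting $(-1)$-curves, where $S_{\min}$ is the centre of a minimal terminal order. The minimal terminal del Pezzo orders have $S_{\min}=\Pb^2$ or $S_{\min}=\PP$ (these being the two minimal rational del Pezzo surfaces), which accounts for the dichotomy between items $(1),(2)$ and items $(3),(4)$ in the statement. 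The bulk of the work is then a case analysis, for each minimal model, of which sequences of blowups keep $-K_{\Xc}$ ample.

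First I would set up the numerical criterion for ampleness on an order. For a canonical (here terminal) order $\Xc$ over $S$ with ramification divisor $D=\bigcup D_i$ and ramification degrees $e_i$, the canonical bundle satisfies an adjunction-type formula of the form $K_{\Xc}\sim f^{*}\!\bigl(K_S+\sum_i(1-\tfrac{1}{e_i})D_i\bigr)$, so that $-K_{\Xc}$ is ample exactly when the $\Qb$-divisor $-\bigl(K_S+\sum_i(1-\tfrac{1}{e_i})D_i\bigr)$ is ample on $S$. This converts the problem into testing positivity of an explicit divisor class against all curve classes, in particular against the $D_i$ themselves and against the exceptional curves of $f$. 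On $\Pb^2$ with $D$ of degree $d$ and common ramification degree $e$, the condition $-K_{\Xc}$ ample becomes $3-(1-\tfrac1e)d>0$, i.e. $d<\tfrac{3e}{e-1}$; running this over integer values of $e\ge 2$ and $d\ge 3$ pins down the ranges $3\le d\le 5$ with the stated constraints on $e$, giving item $(1)$. The analogous computation on $\PP$, writing $D=aC_0+bF$ and using $-K_{\PP}\sim 2C_0+2F$, yields the box $2\le a,b\le 3$ and the ramification constraint of item $(3)$.

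For items $(2)$ and $(4)$ I would analyse blowups. After blowing up a point $p\in S$ with exceptional curve $E$, one has $K_{\Xc'}\sim f^{*}K_{\Xc}+E'$ where $E'$ accounts for $E$ and any change in the ramification data along $E$ (a point on the ramification divisor contributes differently from a point off it, which is precisely why the three bullets in $(2)$ split according to whether $p_i\in f_*D$ and what the degree of $f_*D$ is). The ampleness of $-K_{\Xc'}$ must be checked against every exceptional curve and against the strict transforms of the $D_i$; requiring $-K_{\Xc'}\cdot E>0$ and $-K_{\Xc'}\cdot \widetilde{D_i}>0$ simultaneously bounds the number $n$ of blown-up points. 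I would push the pushforward $f_*D$ back to the minimal model, observe that it must be one of the del Pezzo configurations from items $(1)$ and $(3)$, and then count: the degree-$3$ case on $\Pb^2$ allows up to eight general blowups on $D$ (hence $n<9$), while the bi-degree $(2,2)$ case on $\PP$ allows up to seven (hence $n<8$), with the two sporadic single-blowup families accounting for the remaining bullets of $(2)$.

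The main obstacle I anticipate is the general-position bookkeeping: showing that the blown-up points $\Sigma$ must lie on $f_*D$ (or, in the exceptional single-point cases, off it) and in general position, and proving that this is not merely sufficient but necessary for $-K_{\Xc}$ to remain ample. The subtlety is that the order structure forces the ramification to behave differently along curves through special configurations of points; a point in special position—say several collinear points, or a point at which $D$ is singular in a way incompatible with terminality—either produces a non-terminal singularity (violating the hypothesis via the Chan–Ingalls terminal criterion) or produces a curve of non-positive anti-canonical degree (violating ampleness). I would handle this by translating ``general position'' into the statement that the strict transform $f^{*}(-K)$ of the anti-canonical class stays in the ample cone, equivalently that the configuration avoids the walls of the effective cone of the blown-up del Pezzo surface; the delicate direction is ruling out the special configurations, which I expect to require a careful case-by-case intersection computation combined with the terminality constraint on the ramification indices $e_i$ along the exceptional divisors.
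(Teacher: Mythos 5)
Your proposal follows essentially the same route as the paper: reduce to a minimal terminal model over $\Pb^2$ or $\PP$ via the order MMP, pin down the minimal TdPOs by the ampleness inequality for $-\bigl(K_S+\sum_i(1-\tfrac{1}{e_i})D_i\bigr)$ (giving $3\le\deg D\le 5$ on $\Pb^2$ and the $(a,b)$ box on $\PP$), and then classify admissible blowups by noting that when the ramification is anti-canonical the order is del Pezzo iff its centre is, so the $n<9$ and $n<8$ bounds and the general-position conditions come from the classical classification of del Pezzo blowups, with the sporadic single-blowup cases handled by direct intersection computations. This matches the paper's argument in structure and in all essential steps.
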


The  work aims to classify CdPOs. To do so, we first need to find their resolutions to terminal orders, we will see that the resolution of  a canonical del Pezzo order (which is a terminal order) is not necessarily del Pezzo, and further, their minimal models are more various.

\begin{theorem}
Let  $\Xc$ be a  CdPO over $Z$. Also, let $f:\Yc\rightarrow\Xc$  be a minimal resolution of $\Xc$ to a terminal almost del Pezzo order $\Yc$ and let $g:\Yc\rightarrow\Wc$ be a contraction of $\Yc$ to a minimal terminal almost del Pezzo order $\Wc$. Then  $\Wc$ has centre  $Z=\Pb^2$ or $Z=\Fb_n$ for $n=0,1,$ or $2$.
\end{theorem}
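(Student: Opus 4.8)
The plan is to transfer the question about the order $\Wc$ into one about its centre via the canonical bundle formula for orders, and then to combine the resulting log--del~Pezzo geometry of the centre with the Chan--Ingalls classification of minimal terminal orders in \cite{CI2}. Write $\pi\colon\Wc\to Z$ for the structure map to the centre and let $D=\cup D_i$, $e_i$ be the ramification data of $\Wc$. For a maximal order the canonical divisor satisfies $K_{\Wc}\equiv \pi^{*}(K_Z+\Delta)$ with $\Delta=\sum_i(1-\tfrac1{e_i})D_i$, as in \cite{chan2003pezzo}. Since $\pi$ is finite, $-K_{\Wc}$ is nef and big if and only if $-(K_Z+\Delta)$ is, which is exactly the ``almost del Pezzo'' hypothesis; thus $(Z,\Delta)$ is a weak log del Pezzo pair. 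Because every $e_i\ge 2$, the coefficients $1-\tfrac1{e_i}$ lie in $[\tfrac12,1)$, so the pair is klt, consistent with $\Yc\to\Wc$ being terminal (in particular $Z$ is smooth).

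Next I would identify $Z$. A klt surface pair with $-(K_Z+\Delta)$ nef and big is a weak log del Pezzo surface and hence rational, so $Z$ is a smooth rational surface. The minimal model structure of $\Wc$ descends to a fibration of $Z$ over the base $B$ of the Mori fibre space: if $B$ is a point then $Z$ has Picard rank one, so rationality gives $Z=\Pb^2$; if $B$ is a curve then $Z\to B$ is a relatively minimal ruled surface, and rationality forces $B=\Pb^1$, so $Z=\Fb_n$ for some $n\ge 0$. That no other centre occurs for a minimal terminal order is precisely the classification of \cite{CI2}, which I would cite here. This already produces the list $\Pb^2,\Fb_n$, and the remaining task is the bound on $n$.

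Finally I would bound $n\le 2$ on $\Fb_n$. With $C_0$ the negative section ($C_0^2=-n$) and $F$ a fibre, one computes $-K_{\Fb_n}\cdot C_0=2-n$ and $-K_{\Fb_n}\cdot F=2$, so nefness of $-(K_Z+\Delta)$ gives $0\le (2-n)-\Delta\cdot C_0$ and $\Delta\cdot F\le 2$. When $C_0\not\subset\operatorname{Supp}\Delta$ the term $\Delta\cdot C_0$ is nonnegative and $n\le 2$ is immediate. The genuine difficulty---and the step I expect to be the main obstacle---is the case $C_0\subset\operatorname{Supp}\Delta$, where $\Delta\cdot C_0$ can be negative and nefness alone no longer controls $n$. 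Here I would invoke the explicit local structure of the ramification of a terminal order along a curve together with the conic-bundle and minimality constraints of \cite{CI2}: the admissible ramification indices along a section of $\Fb_n$, combined with the bigness inequality $(K_Z+\Delta)^2>0$ and the fibre bound $\Delta\cdot F\le 2$, should force $n\le 2$. Showing that no terminal, minimal configuration survives for $n\ge 3$ is the crux of the argument.
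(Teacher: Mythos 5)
Your setup is sound and your reduction to the trichotomy of \cite{CI2} matches the paper, but the argument has a genuine gap at exactly the point you flag as ``the crux'': you never actually rule out $n\ge 3$ when $C_0\subset\operatorname{Supp}\Delta$. Nefness of $-(K_Z+\Delta)$ gives $n\le 2-\Delta\cdot C_0$, and when $C_0$ is a ramification curve $\Delta\cdot C_0$ can be negative (on $\Fb_n$ with $C_0$ ramified of index $e$ one gets a contribution $-(1-\tfrac1e)n$ from $C_0$ itself), so the inequality degenerates and permits arbitrarily large $n$ unless the other ramification components compensate. Your proposed fix---``invoke the explicit local structure of the ramification\dots should force $n\le 2$''---is a statement of intent, not a proof, and this is precisely the hard case.

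The paper closes this gap with a different and cleaner device: it proves (Theorem \ref{almost del pezzo centre}, resting on Lemma \ref{Effective curve.K_X}) that if the order is almost del Pezzo then the \emph{centre itself} is almost del Pezzo, i.e.\ $K_ZC\le 0$ for every irreducible curve $C$ and $K_Z^2>0$. The content of Lemma \ref{Effective curve.K_X} is exactly the ramified case you are missing: writing
\[
K_{\Wc}C=\tfrac1e K_ZC+\bigl(1-\tfrac1e\bigr)(K_Z+C)C+\sum_{i\ne 1}\bigl(1-\tfrac1{e_i}\bigr)D_iC
\]
for a ramification curve $C=D_1$, one sees that $K_ZC>0$ together with $K_{\Wc}C\le 0$ forces $(K_Z+C)C<0$, whence $C$ is smooth rational by Lemma \ref{C is smooth rational}, and the Riemann--Hurwitz contradiction of Chan--Kulkarni's Theorem 12 applies verbatim. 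Once $-K_Z$ is known to be nef and big, the conclusion is immediate from the genus formula on ruled surfaces: $K_ZC_0=n+2g-2\le 0$ and $K_Z^2=8(1-g)>0$ give $g=0$ and $n\le 2$ with no case distinction on whether $C_0$ is ramified. To complete your argument you would need to prove an analogue of this lemma; without it the bound $n\le 2$ does not follow from the nefness of $-(K_Z+\Delta)$ alone.
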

With the same procedure as TdPOs, we classify minimal  terminal almost del Pezzo orders (TAdPOs), and then  we find all the blowups which preserve being del Pezzo. 

\begin{theorem}
 Let $\Wc$ be a minimal TAdPO over $Z$ with ramification divisor $D=\cup D_i$ and ramification degrees $e_i$. Also let $g:\Yc\rightarrow \Wc$ represent $n$ iterated blowups of $\Wc$ at the points $\Sigma\subset Z$. Then the followings give a complete list of  $K_{\Yc}$-zero curve $E$'s such that if $f:\Yc\rightarrow\Xc$ contracts $E$, then $\Xc$ is a CdPO.
 \begin{enumerate}
 \item
 $Z=\Pb^2$, $\deg D=3$, $\Sigma$ is in almost general position (see Definition \ref{almost general position P2}) and we have one of the followings
 \begin{itemize}
  \item 
  $\Sigma$ is the set of a double infinitely near point with the exceptional curves $E_1$ and $E_2$, where $E_1^2=-2$ and $E_2^2=-1$; $E:=E_1$.
  \item
  $\Sigma=\{p, q\}$ where $e=2$, $p\in D$ and $q\notin D$; $E$ is the strict transform of the  line going through $p$ and $q$.
 \item
  $\Sigma$ contains  $3$ points in $D$ (infinitely near points are allowed) where there is a line  $\ell$ with multiplicity $3$ at $\Sigma$; $E:=\ell$.
 \item
  $\Sigma$ contains  $6$ points in $D$ (infinitely near points are allowed) where there is a conic  $C$ with multiplicity $6$ at $\Sigma$; $E:=C$.
  \item
 $\Sigma$ contains  $8$ points in $D$ (infinitely near points are allowed) where there is a nodal cubic  $C'$ with multiplicity $9$ at $\Sigma$; $E:=C'$.
  \end{itemize}
 \item
 $Z=\Pb^2$, $\deg D=4$,  $\Sigma$ is in almost general position and contains  $2$ or $3$ points in $D$ (infinitely near points are allowed) where there is a line  $\ell$ with multiplicity $2$ at $\Sigma$; $E:=\ell$. If $\Sigma$ has $3$ points, they are not collinear.
 
  \item
 $Z=\PP$,  and we have one of the followings
 \begin{itemize}
 \item
$D\equiv2C_0+2F$, $e=2$, and $\Sigma=\{p\}$ is a single point, where $p\notin D$. Then $E$ is the proper transform of any fibre (in any direction) passing through $p$.
\item
$D\equiv3C_0+2F$, $e=2$, and $\Sigma=\{p\}$ is a single point, where $p\in D$. Then $E$ is the proper transform of  any fibre in $[F]$ passing through  $p$.
\item
$D\equiv3C_0+3F$, $e=2$, and $\Sigma=\{p\}$ is a single point, where $p\in D$. Then $E$ is the proper transform  of any fibre (in any direction) passing through  $p$.
\item
$D\equiv2C_0+2F$, $e>1$, and $\Sigma\subset D$ is a set of points in almost general position (see Definition \ref{almost general position P1}) . Then $E$ is the blowup of  any curve in  $\Sigma$-almost general position.
  \end{itemize}
  \item
 $Z=\Fb_1$,  $\Sigma$ is in almost general position (see Definition \ref{almost general position F1}), $D\equiv 2C_0+4F$ where $C_0$ is the unique section with $C_0^2=-1$, $e=2$, and $E$ is one of the followings
 \begin{itemize}
 \item
$C_0$.
\item
$\tilde{F}$ , where $F$ is a fibre and  multiplicity of $F$ at $\Sigma$ is $2$.
\item
An exceptional curve with self-intersection equals $-2$.
  \end{itemize}
  \item
 $Z=\Fb_2$,  $\Sigma$ is in almost general position (see Definition \ref{Z2 Y adp}), $D\equiv 2C_0+4F$ where $C_0$ is the unique section with $C_0^2=-2$, $e$ is free and we have one of the followings
 \begin{itemize}
 \item 
  The section $C_0$
 \item
  Blowing up  points $p\notin D$; $E:=F$ where $F$ is the fibre passing $p$.
  \item
  Blowing up  a set of points $\Sigma=\{p_1,\cdots, p_n\}\subset D$ in almost general position where $n\leq7$;  $E:=F$ is any fibre with multiplicity $2$ at $\Sigma$.
 \item
 $E:=E_j$ where $E_j$ is $(-2)$-exceptional curve.
  \end{itemize}
\end{enumerate}
\end{theorem}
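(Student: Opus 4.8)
The plan is to reduce every statement about the orders $\Wc$, $\Yc$, $\Xc$ to intersection-theoretic statements about log pairs on their (commutative) centres, and then to carry out the enumeration surface by surface. The starting point is the canonical bundle formula for an order: if $\rho\colon\Yc\to\tilde Z$ denotes the structure map to the blown-up centre $\tilde Z$ carrying ramification data $(D_i,e_i)$, then $K_{\Yc}\equiv\rho^*\bigl(K_{\tilde Z}+\sum_i(1-\tfrac1{e_i})D_i\bigr)$, so that a curve $E$ on $\Yc$ is $K_{\Yc}$-zero precisely when $(K_{\tilde Z}+\Delta)\cdot\bar E=0$ for $\Delta:=\sum_i(1-\tfrac1{e_i})D_i$ and $\bar E=\rho_*E$. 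Since contracting a $K_{\Yc}$-zero curve is crepant, $K_{\Yc}=f^*K_{\Xc}$, so $\Xc$ automatically inherits no worse than canonical singularities from the terminal $\Yc$; what remains is exactly the del Pezzo condition. Thus the problem becomes: on the almost del Pezzo $\Yc$ (whose $-K_{\Yc}$ is nef and big), determine all $K_{\Yc}$-zero curves $E$ such that contracting $E$ makes $-K_{\Xc}$ ample; by the contraction theorem this forces $E$ to carry every $K_{\Yc}$-zero class.

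First I would fix the centre using the preceding theorem, which restricts $Z$ to $\Pb^2$, $\PP=\Fb_0$, $\Fb_1$, or $\Fb_2$, and recall for each the corresponding minimal TAdPO data $(D,e_i)$ coming from the classification of minimal models. For each such $Z$ I would record the anticanonical class and the effective boundary $\Delta$, and then translate the relevant ``almost general position'' definition into the concrete requirement that blowing up $\Sigma$ keeps $-K_{\Yc}$ nef and big. This is what bounds $n$ (the $n<9$, $n<8$, $n\le7$ conditions) and forbids the degenerate configurations: nef-and-bigness of $-K_{\Yc}$ is equivalent to the blown-up boundary pair staying weakly log del Pezzo, and $(-K_{\Yc})^2>0$ forces an upper bound on the number of points.

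The core computation is the solution of $K_{\Yc}\cdot E=0$ for irreducible $E$. Writing $g\colon\Yc\to\Wc$ for the $n$ blowups with exceptional curves $E_j$ and discrepancies $a_j$ (the $a_j$ determined by whether the centre of each blowup lies on $D$ and by the relevant $e_i$), I would expand $K_{\Yc}=g^*K_{\Wc}+\sum_j a_jE_j$ and impose $K_{\Yc}\cdot E=0$ against the two natural families of candidates: strict transforms of low-degree curves on $Z$ (lines, conics, nodal cubics on $\Pb^2$; fibres and the distinguished sections on the Hirzebruch surfaces) passing through $\Sigma$ with prescribed multiplicities, and the exceptional $(-2)$-curves. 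Adjunction on $\tilde Z$ together with the boundary $\Delta$ turns $K_{\Yc}\cdot E=0$ into the exact multiplicity conditions appearing in the list — a line of multiplicity $3$, a conic of multiplicity $6$, a nodal cubic of multiplicity $9$ in the degree-$3$ case over $\Pb^2$, and their analogues on the $\Fb_n$ cases. For each surviving $E$ I would then verify by Nakai--Moishezon that $-K_{\Xc}$ is ample, i.e. that $-K_{\Yc}$ meets every non-contracted curve strictly positively, so that $\Xc$ is genuinely a CdPO.

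The main obstacle is \emph{completeness}: proving that the enumerated curves exhaust all possibilities. Two points make this delicate. First, the discrepancy bookkeeping is sensitive to the ramification, so one must track how each $e_i$ (including the free-$e$ case over $\Fb_2$ and the $D\equiv2C_0+2F$ case over $\PP$) enters the coefficient $a_j$ of each $E_j$, and hence which blown-up configurations can produce a $K_{\Yc}$-zero curve at all; the almost-del-Pezzo constraint and the requirement that contraction preserve ampleness interact here and must be checked simultaneously rather than separately. Second, one must show there is no $K_{\Yc}$-zero curve outside the listed families — this I would handle by classifying all effective classes $C$ on each $\Yc$ with $(K_{\tilde Z}+\Delta)\cdot C=0$ and $C^2<0$, using the bound on $n$ from the second step to make this a finite check, and then ruling out reducible or higher-degree configurations by positivity of $-K_{\Yc}$ on their components. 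Assembling these case analyses over the four centres yields the five-item list.
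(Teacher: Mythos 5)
Your plan follows the paper's own strategy essentially step for step: reduce everything to the log pair $(Z,\Delta)$ on the centre, use the classification of minimal TAdPOs to fix $Z\in\{\Pb^2,\PP,\Fb_1,\Fb_2\}$, bound the number of blowups by $K_{\Yc}^2>0$ together with nefness tested on generators of the effective cone (which is exactly what the various almost-general-position definitions encode, cf.\ Propositions \ref{Y adP M} and \ref{Z iff X a}), and then solve $K_{\Yc}\cdot E=0$ case by case using the discrepancy coefficients of Lemma \ref{coefficient of E} against strict transforms of low-degree curves and $(-2)$-exceptional curves. This matches the paper's proof, which is distributed over Section \ref{Classification of CdPOs}; your explicit insistence on verifying ampleness of $-K_{\Xc}$ after contraction (i.e.\ that \emph{all} $K_{\Yc}$-zero curves are contracted) is a point the paper leaves largely implicit, but it is a refinement of presentation rather than a different method.
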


We always work with an algebraically closed field. $\Xc$, $\Yc$, and $\Wc$ denote maximal orders over the centres $Z_{\Xc}$, $Z_{\Yc}$ and $Z_{\Wc}$, respectively. A maximal order $\Xc$ over $Z_{\Xc}$ correspond to a finite number of ramified divisors $D_{i, \Xc}\subset Z_{\Xc}$ their  sum $D_{\Xc}=\sum_iD_{i, \Xc}$ is called ramification divisors. When it is clear we show the centre of $\Xc$ by $Z$, a ramified divisor by $D_i$ and its ramification divisors by $D$. Also for each ramified divisor $D_i$, there is a ramification degree $e_i$. These give us the divisor
\[
\Delta_{\Xc}:=\sum_i\left(1-\dfrac{1}{e_i}\right)D_{i, \Xc},
\]
called the discriminant of $\Xc$ and again when it is clear we show it by $\Delta$.

 This work is based on my PhD thesis under the supervision of Dr. Colin Ingalls.

%%%%%%%%%%%%%%%%%%%%%%%%%%%%%%%%%%%%%%%%%%%%%%%%%%%
\section{Classification of TdPOs}\label{section2}
In this section we will classify TdPOs.  Chan and Ingalls in \cite{CI2}, generalized the notion of minimal model program (MMP) to terminal orders over surfaces. Running MMP on a TdPO, we get a minimal TdPO. We firstly classify minimal TdPOs and then using this classification, we can classify all TdPOs.

\subsection{Minimal TdPOs}
\begin{definition}\label{minimal terminal order}
Let $\Xc$ be a terminal order over $Z$ and let $K_{\Xc}=K_Z+\Delta$ be its canonical divisor. Then $\Xc$ is a minimal terminal order if for every irreducible curve $C\in Z$, either $K_{\Xc}C\geq 0$ or $C^2\geq 0$.
\end{definition}

\begin{proposition}\cite[Theorem 3.10]{CI2}\label{contracting terminal orders is terminal}
 Let $\Xc$ be a terminal order over $Z$. Suppose there is an irreducible curve $E\in Z$ such that $E^2<0$ and $K_{\Xc}.E<0$. Then there exists a map $\pi:Z\rightarrow Z'$ that contracts exactly $E$ and the order $\Xc'$ over $Z'$ is terminal.
\end{proposition}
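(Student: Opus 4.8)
The plan is to produce the contraction at the level of centres, propagate the order by taking a maximal extension, and then certify terminality by a discrepancy comparison. Since $E$ is an irreducible curve with $E^2<0$, the $1\times 1$ intersection matrix $(E^2)$ is negative definite, so Grauert's contractibility criterion yields a proper birational morphism $\pi:Z\rightarrow Z'$ onto a normal surface contracting exactly $E$ to a point $z'$ (equivalently, since $E$ spans a $K_{\Xc}$-negative ray with $E^2<0$, one may invoke the contraction theorem). First I would use the hypothesis $K_{\Xc}.E<0$ together with the canonical bundle formula $K_{\Xc}=K_Z+\Delta$ and adjunction on $Z$ to identify $E$ as a smooth rational curve and to record how $E$ meets the ramification divisor $D$; this bookkeeping isolates the case in which $E$ is itself a ramified component, which must be tracked separately in the discriminant.

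Next I would define the contracted order $\Xc'$ as the maximal order on $Z'$ extending $\Xc$ over $Z\setminus E$, for instance the reflexive pushforward $(\pi_*\Xc)^{**}$, with ramification divisors $\pi_*D_{i,\Xc}$ and the inherited ramification degrees, so that $K_{\Xc'}=K_{Z'}+\Delta'$ with $\Delta'=\pi_*\Delta$. Because $\pi$ is an isomorphism over $Z'\setminus\{z'\}$, the orders $\Xc$ and $\Xc'$ coincide there, and terminality of $\Xc'$ can only fail over the single point $z'$.

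The core of the argument, and the step I expect to be the main obstacle, is verifying terminality at $z'$ through a discrepancy comparison. Writing $K_{\Xc}=\pi^*K_{\Xc'}+a\,E$ and intersecting with $E$ gives $K_{\Xc}.E=a\,E^2$ (using $\pi^*K_{\Xc'}.E=0$), whence the discrepancy $a=(K_{\Xc}.E)/E^2>0$ because both the numerator and the denominator are negative; in particular $E$ itself has positive discrepancy over $\Xc'$. Passing to a common resolution $g:W\rightarrow Z$ with $h=\pi\circ g$ and writing $g^*E=\widetilde E+\sum_i c_iF_i$ with $c_i\geq0$, the standard comparison then reads $a(F_i;Z',\Delta')=a(F_i;Z,\Delta)+a\,c_i$, so every discrepancy over $\Xc'$ is at least the corresponding one over $\Xc$; since $\Xc$ is terminal and $a>0$, all discrepancies over $\Xc'$ are positive and $\Xc'$ is terminal. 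The genuinely delicate points are order-theoretic rather than formal: one must justify that this discrepancy calculus is the correct one for orders in the sense of \cite{CI2} (that discrepancies are computed by order resolutions and that $\Delta$ pushes forward and pulls back compatibly), and that when $E$ lies in the ramification locus the resulting change in the discriminant coefficients does not destroy positivity. These are precisely the compatibilities one checks against the local classification of terminal orders over surfaces.
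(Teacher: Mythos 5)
The paper does not actually prove this statement: it is imported verbatim from Chan--Ingalls as \cite[Theorem 3.10]{CI2}, so there is no in-paper argument to compare yours against. Judged on its own terms, your outline has the right shape (contract at the level of centres, push the order forward as a reflexive/maximal extension, certify terminality by a discrepancy comparison), and the numerical bookkeeping is correct: $a=(K_{\Xc}.E)/E^2>0$, and $a(F_i;Z',\Delta')=a(F_i;Z,\Delta)+a\,c_i$ with $c_i\geq0$ does force all discrepancies over $z'$ to be positive, \emph{provided} terminality of orders is characterized by positivity of order-theoretic discrepancies and provided those discrepancies obey the log-pair calculus you invoke.

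Two substantive gaps remain. First, you never show that $E$ is a $(-1)$-curve, i.e.\ that $Z'$ is smooth. Terminal orders in the sense of \cite{CI2} have smooth centres with normal crossing ramification, and this paper uses the proposition precisely in that form (Corollary \ref{centre of minimal terminal order} speaks of a sequence of blowdowns of $(-1)$-curves); a Grauert contraction onto a singular normal surface would not produce a terminal order. When $E$ is not a ramification component the claim is easy: $\Delta.E\geq0$ gives $K_Z.E\leq K_{\Xc}.E<0$, and adjunction with $E^2<0$ forces $K_Z.E=E^2=-1$. But when $E$ is a ramified component, $\Delta.E$ can be negative and one must run Riemann--Hurwitz on the cyclic cover of $E$ attached to the ramification data to conclude $E\simeq\Pb^1$ and $E^2=-1$; your sketch only promises to ``record how $E$ meets $D$'' without drawing this conclusion. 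Second, the compatibilities you defer at the end are the actual mathematical content, not a formality: the boundary entering the order discrepancies on a common resolution $W$ is the discriminant of the induced order on $W$, whose coefficients along exceptional curves are dictated by the Artin--Mumford sequence (cf.\ Lemma \ref{coefficient of E}) rather than by the total transform of $\Delta$, and when $E$ is ramified one must additionally verify that the pushed-forward ramification configuration at $z'$ is again of terminal type (normal crossings with allowable secondary ramification). As written, your argument establishes the formal MMP comparison but assumes the order-theoretic input that \cite[Theorem 3.10]{CI2} is actually proving.
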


Let  $K_{\Xc}$ be not nef. If for an irreducible curve $E$ the self intersection is negative, then $\pi:Z\rightarrow Z'$ contracts $E$. Proposition \ref{contracting terminal orders is terminal} proves that the order $\Xc'$ over $Z'$ is a terminal order. Then $\Xc$ can be replaced by $\Xc'$ and we can repeat the proposition for $\Xc'$. This ensures that we end with a minimal terminal order.

\begin{corollary}\label{centre of minimal terminal order}
Let $\Xc$ be a terminal order over $Z$. There exists a sequence of blowdowns of $(-1)$-curves 
\[
f:Z\rightarrow Z_1\rightarrow\cdots\rightarrow Z_n=Z'
\]
and a maximal order $\Xc'$ over $Z'$ where $K_{\Xc'}=K_{Z'}+\Delta'$. Then one of the followings holds,
\begin{itemize}
\item $K_{\Xc'}$ is nef.
\item $\pi:Z'\rightarrow C$ is a ruled surface and $-K_{\Xc'}.F>0$ for a fibre $F$. Further, $Z'$ contains no irreducible  curve $C$ such that $C^2<0$ and $K_{\Xc'}.C<0$.
\item $Z'\simeq \Pb^2$ and $-K_{\Xc'}$ is ample.
\end{itemize} 
\end{corollary}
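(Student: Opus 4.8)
The plan is to run the order minimal model program: iterate Proposition~\ref{contracting terminal orders is terminal} until it no longer applies, then read the three alternatives off the classification of extremal contractions on a smooth surface.

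First I would dispose of the trivial case: if $K_{\Xc}$ is already nef, take $Z'=Z$ and $\Xc'=\Xc$, which is the first alternative. Otherwise $K_{\Xc}$ is not nef, so some irreducible curve $E$ has $K_{\Xc}.E<0$. As long as one such curve also has $E^2<0$, Proposition~\ref{contracting terminal orders is terminal} furnishes a contraction $\pi:Z\to Z'$ killing exactly $E$ with $\Xc'$ again terminal. Since $\pi$ is a birational morphism between smooth surfaces (centres of terminal orders being smooth) that contracts a single irreducible curve, Castelnuovo's criterion forces $E^2=-1$; hence each step is genuinely the blowdown of a $(-1)$-curve. I then replace $\Xc$ by $\Xc'$ and repeat. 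Termination is immediate: every blowdown drops the Picard number $\rho(Z)$ by one and $\rho\geq1$, so after finitely many steps the process halts, producing exactly the chain $f:Z\to Z_1\to\cdots\to Z_n=Z'$ of the statement.

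When the program halts, either $K_{\Xc'}$ is nef (first alternative) or $K_{\Xc'}$ is not nef while no irreducible curve $C$ satisfies both $C^2<0$ and $K_{\Xc'}.C<0$ --- precisely the clause attached to the second alternative. To identify the geometry I would apply the Cone Theorem to the pair $(Z',\Delta')$, whose log canonical class is $K_{Z'}+\Delta'=K_{\Xc'}$. Non-nefness yields a $K_{\Xc'}$-negative extremal ray $R=\Rb_{\geq0}[C]$ spanned by a rational curve, and its contraction $\phi_R:Z'\to Y$ is of one of the three surface types. The birational type is excluded, since it would contract a curve $C$ with $C^2<0$ and $K_{\Xc'}.C<0$, contradicting the halting hypothesis. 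If $\dim Y=1$ then $Z'\to Y$ is a ruled surface with $-K_{\Xc'}.F>0$ on a fibre $F$, the second alternative; if $\dim Y=0$ then $\rho(Z')=1$ on a smooth surface carrying a $K_{\Xc'}$-negative ray, forcing $Z'\simeq\Pb^2$ with $-K_{\Xc'}$ ample, the third alternative.

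The step I expect to be the main obstacle is the correct transfer of the Cone Theorem and the classification of extremal contractions into the order setting: one must verify that terminality of $\Xc'$ makes $(Z',\Delta')$ a pair to which the surface MMP genuinely applies, and that the numerical data ($-K_{\Xc'}.F>0$ on fibres together with the absence of contractible negative curves) really does isolate the ruled and $\Pb^2$ cases with no further possibilities. By contrast, the nef dichotomy, the Castelnuovo identification of the contracted curves, and termination via the Picard number are all routine once Proposition~\ref{contracting terminal orders is terminal} is available.
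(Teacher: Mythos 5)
Your argument is correct and follows essentially the same route the paper takes: the paper's own justification is the paragraph after Proposition~\ref{contracting terminal orders is terminal} (iterate the contraction until no curve with $C^2<0$ and $K_{\Xc}.C<0$ remains), with the resulting trichotomy imported from Chan--Ingalls \cite[Corollary 3.20]{CI2}, which is proved exactly by the cone-theorem analysis of the klt pair $(Z',\Delta')$ that you carry out. The point you flag as the main obstacle does go through, since a terminal order has smooth centre and normal-crossing ramification with coefficients $1-\tfrac{1}{e_i}<1$, so $(Z',\Delta')$ is indeed a klt surface pair to which the log MMP classification of extremal contractions applies.
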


\begin{lemma}\label{contraction of del pezzo orders}
Let $\Xc$ be a del Pezzo order over $Z$ and let $f:Z\rightarrow Z'$ be a birational morphism  which contracts exactly an irreducible curve $E$ such that $E^2=-1$. Then the order $\Xc'$ over $Z'$ is del Pezzo.
\end{lemma}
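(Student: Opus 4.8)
The plan is to prove the statement directly from the definition, by showing that $-K_{\Xc'}$ is ample on $Z'$ via the Nakai--Moishezon criterion for $\Qb$-divisors: it suffices to check that $(-K_{\Xc'})^2 > 0$ and that $-K_{\Xc'}\cdot C' > 0$ for every irreducible curve $C'\subset Z'$. The engine of the argument is a pullback relation between the two order-canonical divisors. Writing $K_{\Xc} = K_Z + \Delta$ and $K_{\Xc'} = K_{Z'} + \Delta'$ with $\Delta' = f_*\Delta$, I would combine the blowdown formula $K_Z = f^*K_{Z'} + E$ with the transformation of the discriminant, $f^*\Delta' = \Delta + mE$ for a suitable rational $m$, to reach a relation of the shape $f^*K_{\Xc'} = K_{\Xc} + aE$. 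The detailed bookkeeping of $\Delta$ (in particular the case where $E$ is itself a ramification component, so that $E$ appears in $\Delta$ with coefficient $1-\tfrac1{e_E}$) is what produces the exceptional term, but its exact value is not needed in closed form.

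Instead, I would pin down $a$ by intersecting with $E$. Since $f$ contracts $E$, the projection formula gives $f^*K_{\Xc'}\cdot E = K_{\Xc'}\cdot f_*E = 0$, and combined with $E^2 = -1$ the relation $f^*K_{\Xc'} = K_{\Xc} + aE$ forces $a = K_{\Xc}\cdot E$. Because $\Xc$ is del Pezzo, $-K_{\Xc}$ is ample, so $K_{\Xc}\cdot E < 0$ and hence $a < 0$; equivalently $f^*(-K_{\Xc'}) = -K_{\Xc} - aE$ with $-a > 0$.

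With this in hand the two inequalities follow by computation on $Z$, using that pullback preserves intersection numbers. For the self-intersection, $(-K_{\Xc'})^2 = (-K_{\Xc} - aE)^2 = (-K_{\Xc})^2 + a^2 > 0$, since $(-K_{\Xc})^2 > 0$ by ampleness. For an arbitrary irreducible curve $C'\subset Z'$, let $C$ be its strict transform, so that $f_*C = C'$ and $C\neq E$. The projection formula yields $-K_{\Xc'}\cdot C' = f^*(-K_{\Xc'})\cdot C = (-K_{\Xc})\cdot C - a\,(E\cdot C)$; here $(-K_{\Xc})\cdot C > 0$ by ampleness, while $-a > 0$ and $E\cdot C\geq 0$ because $E$ and $C$ are distinct irreducible curves, so the whole expression is positive. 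By Nakai--Moishezon, $-K_{\Xc'}$ is ample and $\Xc'$ is a del Pezzo order.

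The step I expect to be the main obstacle is the first one: establishing that $f^*K_{\Xc'} = K_{\Xc} + aE$ genuinely holds at the level of the order-canonical divisors, i.e.\ that $\Xc'$ is a well-defined maximal order with ramification data $\Delta' = f_*\Delta$ and that the discriminant transforms as $f^*\Delta' = \Delta + mE$ under the contraction. Everything after that is a formal consequence of $a = K_{\Xc}\cdot E < 0$ and is insensitive to the explicit value of $a$, so the crux is the behaviour of the ramification data under contraction rather than any intersection-theoretic inequality.
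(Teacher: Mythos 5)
Your proposal is correct and follows essentially the same route as the paper: both rest on the relation $K_{\Xc}\equiv f^*K_{\Xc'}+(\text{const})\cdot E$ and then check $(K_{\Xc'})^2>0$ and negativity on curves by pulling back to $Z$. The only cosmetic difference is that you test curves via the strict transform, which forces you to first pin down the sign of the exceptional coefficient from $K_{\Xc}\cdot E<0$, whereas the paper uses the total transform $f^*C'$, which is orthogonal to $E$ and lets the coefficient drop out entirely.
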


\begin{proof}
Consider the equations
\begin{align*}
K_{\Xc}&=K_Z+\Delta\\
K_{\Xc'}&=K_{Z'}+\Delta'\\
K_{Z}+\Delta&\equiv f^*(K_{Z'}+\Delta')+aE.
\end{align*}
As $\Xc$ is a del Pezzo order, $(K_{Z}+\Delta)^2>0$ and $(K_{Z}+\Delta).C<0$ for any effective curve $C\in Z$. Thus
\begin{align*}
(K_{Z}+\Delta)^2&=(f^*(K_{Z'}+\Delta')+aE)^2\\
&=(K_{Z'}+\Delta')^2-(a)^2>0,
\end{align*}
so  $(K_{Z'}+\Delta')^2>0$.

If $C'$ is an effective curve in $Z'$, then $C=f^*C'$ is an effective curve in $Z$.
\begin{align*}
0>(K_{Z}+\Delta)C&=(f^*(K_{Z'}+\Delta')+aE)C\\
&=f^*(K_{Z'}+\Delta')C+aE.C\\
&=(K_{Z'}+\Delta')f_*C+0\\
&=(K_{Z'}+\Delta')C'.
\end{align*}
\end{proof}

Now if we add the assumption of ampleness of the anti-canonical bundle $-K_{\Xc}$ to Corollary  \ref{centre of minimal terminal order} we get the following result.

\begin{corollary}\label{centre of minimal terminal del pezzo orders}
Let $\Xc$ be a minimal TdPO over $Z$. Then $Z=\PP$ or $Z=\Pb^2$.
\end{corollary}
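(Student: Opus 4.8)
The plan is to feed the extra hypothesis that $-K_{\Xc}$ is ample into Corollary \ref{centre of minimal terminal order} and determine which of its three alternatives can survive. First I would observe that since $\Xc$ is a minimal terminal order and $-K_{\Xc}$ is ample, there is no irreducible curve $E\subset Z$ with $E^2<0$: such a curve would satisfy $K_{\Xc}.E<0$ by ampleness, and together with $E^2<0$ this contradicts Definition \ref{minimal terminal order}. In particular $Z$ carries no $(-1)$-curve, so the sequence of blowdowns of $(-1)$-curves appearing in Corollary \ref{centre of minimal terminal order} is forced to be trivial, whence $Z$ itself falls into exactly one of the three alternatives listed there for $Z'$.

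Next I would eliminate the alternatives one at a time. The case ``$K_{\Xc}$ is nef'' is impossible, because $-K_{\Xc}$ ample gives $K_{\Xc}.C<0$ for every irreducible curve $C$, so $K_{\Xc}$ cannot be nef. The case ``$Z'\simeq\Pb^2$ with $-K_{\Xc'}$ ample'' is already one of the two desired conclusions, namely $Z=\Pb^2$. Thus the entire content of the statement is concentrated in the remaining ruled alternative.

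For the ruled case, $\pi:Z\to C$ is a ruled surface over a curve $C$, and $Z$ contains no irreducible curve of negative self-intersection: the condition in the corollary asks for no curve with both $C^2<0$ and $K_{\Xc}.C<0$, and ampleness of $-K_{\Xc}$ makes the second inequality automatic, so it reduces to the absence of negative curves. The decisive step is to pin down the base and the surface. Here I would invoke the Chan–Kulkarni result \cite{chan2003pezzo} (the centre of a del Pezzo order is a del Pezzo surface, hence rational) to conclude that $Z$ is a rational geometrically ruled surface, i.e.\ a Hirzebruch surface $\Fb_n$ with $n\geq0$. For $n\geq1$ the distinguished section $C_0$ has $C_0^2=-n<0$, which is excluded; hence $n=0$ and $Z=\Fb_0=\PP$. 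Combining the two surviving alternatives gives $Z=\Pb^2$ or $Z=\PP$.

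I expect the genuine obstacle to be precisely this ruled alternative, and within it the rationality of $Z$: without rationality one cannot exclude a ruled surface over a base of positive genus, where the absence of negative curves no longer forces $Z=\Fb_0$ (for instance a product with an elliptic base has no negative curves yet is not $\Fb_0$). Once rationality is secured via Chan–Kulkarni, the remainder is the standard classification of geometrically ruled rational surfaces together with the self-intersection of the negative section, and the conclusion follows directly.
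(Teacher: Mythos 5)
Your proof is correct and follows the same route the paper intends: the paper gives no explicit argument for this corollary, merely saying that it follows by adding ampleness of $-K_{\Xc}$ to Corollary \ref{centre of minimal terminal order}, and your elimination of the three alternatives is exactly the implicit content. The only place where you make a genuine choice is the ruled alternative, and there your route (Chan--Kulkarni implies $Z$ is del Pezzo, hence rational, hence $Z=\Fb_n$, and $n\geq 1$ is killed by the negative section) differs slightly from how the paper handles the parallel almost-del-Pezzo statement in Theorem \ref{minimal terminal almost del Pezzo order}, where the genus formula $K_ZC_0=n+2g-2$, $K_Z^2=8(1-g)$ is used directly to force $g=0$ and bound $n$; both arguments are valid, and your observation that rationality is the real content of the ruled case (a product with an elliptic base having no negative curves) is exactly the right thing to flag. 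One tiny point of care: you should make sure the Chan--Kulkarni theorem you invoke applies to the centre $Z$ here, which it does since $Z$ is smooth and in particular normal Gorenstein.
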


So all TdPOs are blowups of  the minimal TdPOs over $\PP$ or $\Pb^2$. Therefore, for classification we first classify the ones over $\Pb^2$ and $\PP$ and then we blow them up. We classify such blowups that keep the order del Pezzo.

 When we blo wup an order at a point $p$ the canonical bundle of the new order depends on $p$ and the ramification divisors.
 
\begin{lemma}\cite[p:21]{CI2}\label{coefficient of E}
Let $\Xc$ be a terminal order over $Z$ and let $f:Z'\rightarrow Z$ be a blowup at a point $p$. Then we have the equation
\[
K_{Z'}+\Delta'\equiv f^*(K_{Z}+\Delta)+aE,
\]
where
\begin{enumerate}
\item
$a=1$, if $p$ is not in $D=\ceil{\Delta}$.
\item
$a=\dfrac{1}{e}$, if $p$ is a smooth point of $D$, where $e$ is the ramification degree of $D$.
\item
$a=\dfrac{1}{ne}$, if $p$ is a singular point of $D$, where the ramification degrees of the ramification curves crossing at $p$ are $e$ and $ne$.
\end{enumerate}
\end{lemma}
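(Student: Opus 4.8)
The plan is to reduce the computation of $a$ to the standard discrepancy formula for a blow-up of a smooth surface, corrected by the extra ramification that the exceptional curve $E$ may acquire in the new order $\Xc'$. First I would record the well-known relation $K_{Z'}\equiv f^*K_Z+E$ for the blow-up $f\colon Z'\to Z$ of a point $p$ on the smooth surface $Z$. Next I would identify the discriminant $\Delta'$ of $\Xc'$: its ramification curves are the strict transforms $\tilde D_i$ of the $D_i$, each retaining its ramification degree $e_i$, together possibly with the exceptional curve $E$ carrying some ramification degree $e_E$ (where $e_E=1$ records that $E$ is unramified). Writing $m_i:=\mathrm{mult}_p D_i$ and using $f^*D_i\equiv \tilde D_i+m_iE$, I would substitute $\tilde D_i=f^*D_i-m_iE$ into $\Delta'=\sum_i\left(1-\tfrac{1}{e_i}\right)\tilde D_i+\left(1-\tfrac{1}{e_E}\right)E$ to obtain $\Delta'\equiv f^*\Delta-\bigl(\sum_i(1-\tfrac{1}{e_i})m_i\bigr)E+\left(1-\tfrac{1}{e_E}\right)E$.

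Combining the two relations yields the single display
\[
K_{Z'}+\Delta'\equiv f^*(K_Z+\Delta)+\left(1+\left(1-\tfrac{1}{e_E}\right)-\sum_i\left(1-\tfrac{1}{e_i}\right)m_i\right)E,
\]
so that the coefficient in question is $a=2-\tfrac{1}{e_E}-\sum_i\left(1-\tfrac{1}{e_i}\right)m_i$. From here the three cases are pure bookkeeping once $e_E$ and the multiplicities $m_i$ are known. In case (1), $p\notin D$ forces every $m_i=0$, and since $\Xc$ is Azumaya near $p$ the order $\Xc'$ is unramified along $E$, i.e. $e_E=1$, giving $a=1$. In case (2), $p$ is a smooth point lying on a single branch of degree $e$ with $m_i=1$, and $E$ remains unramified ($e_E=1$), so $a=1-\left(1-\tfrac1e\right)=\tfrac1e$. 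In case (3), $p$ is a node where branches of degrees $e$ and $ne$ cross, each with $m_i=1$, and the formula collapses to $a=\tfrac1e+\tfrac1{ne}-\tfrac1{e_E}$, which equals $\tfrac1{ne}$ \emph{precisely when} $e_E=e$.

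The crux, and where I expect the genuine work to lie, is determining the ramification degree $e_E$ of the exceptional curve inside $\Xc'$; this is exactly the point at which the noncommutative structure enters and cannot be read off from divisor theory on $Z$ alone. Cases (1) and (2) are governed by the local model of the order near an Azumaya point and near a smooth point of the ramification locus respectively, and in both $E$ stays unramified. The substantial case is (3): the terminal hypothesis at a node forces a divisibility relation between the two ramification degrees (this is precisely what the notation $e$ and $ne$ encodes), and one must show that blowing up the node produces a ramification curve of degree exactly $e$ — the smaller index, which is also $\gcd(e,ne)$ — along $E$. Granting this, the numerics above deliver $a=\tfrac1{ne}$. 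I would obtain the value $e_E=e$ directly from the \'etale-local normal forms for terminal orders at nodal ramification in \cite{CI2}, which is the genuinely noncommutative ingredient that the surface computation cannot supply by itself; the final step is then merely to substitute $e_E=e$ and the multiplicities into the boxed coefficient.
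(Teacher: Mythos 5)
Your reduction is sound, but be aware that the paper offers no proof of this lemma at all: it is quoted directly from \cite[p:21]{CI2}, so there is no internal argument to compare yours against. Your divisor arithmetic --- $K_{Z'}\equiv f^*K_Z+E$, $\tilde D_i\equiv f^*D_i-m_iE$, hence $a=2-\tfrac{1}{e_E}-\sum_i\left(1-\tfrac{1}{e_i}\right)m_i$ --- is the right skeleton, and under the terminal hypothesis each branch of $D$ through $p$ is smooth (so $m_i=1$) and at most two branches meet at $p$, which is what justifies your case-by-case multiplicities; you should state explicitly that terminality is what rules out higher multiplicities and three or more branches, since otherwise your ``pure bookkeeping'' would not exhaust the possibilities. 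You are also right that the entire content is the value of $e_E$, and your claimed values do check out against the \'etale-local models: at a smooth point of the ramification the class is a symbol $(x,f)_e$ with $f$ a unit at $p$, whose residue along $E$ is the class of the constant $f(p)$ and hence trivial, giving $e_E=1$; at a terminal node with indices $e$ and $ne$ the class is $(x,y)_e$ plus a part whose residue along $E$ is again constant, so in a standard chart the residue along $E$ is $(-s)^{1/e}$ and $e_E=e$, which is exactly what your formula needs to output $a=\tfrac{1}{ne}$. Deferring that residue computation to the normal forms in \cite{CI2} is legitimate --- it is the genuinely noncommutative input, it is precisely what the cited page supplies, and the rest of your argument is complete.
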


\begin{proposition}\label{decrease of K_X^2}
Let $\Xc$ be a terminal order over $Z$ and let $\Yc$ be the terminal order obtained by the blowup $f:Z'\rightarrow Z$ at a point $p$. Then
\[
K_{\Yc}^2= K_{\Xc}^2-a^2,
\]
where $a$ is as Lemma \ref{coefficient of E}.
\end{proposition}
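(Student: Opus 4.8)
The plan is to deduce the identity directly from the numerical equivalence supplied by Lemma \ref{coefficient of E}, by squaring it and invoking the standard intersection theory of a blowup of a smooth surface. First I would unwind the definitions: since $K_{\Yc}=K_{Z'}+\Delta'$ and $K_{\Xc}=K_{Z}+\Delta$, the equivalence in Lemma \ref{coefficient of E} reads
\[
K_{\Yc}\equiv f^*K_{\Xc}+aE
\]
as $\Qb$-divisors on $Z'$, where $E$ is the exceptional curve of $f$ and $a$ is the coefficient recorded there. Everything then reduces to computing $(f^*K_{\Xc}+aE)^2$.

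Next I would assemble the three intersection numbers needed after expanding the square. Because $f$ is the blowup of a smooth surface at a point, $E^2=-1$. By the projection formula, $f^*K_{\Xc}\cdot E=K_{\Xc}\cdot f_*E=0$, since $E$ is contracted and hence $f_*E=0$; and $(f^*K_{\Xc})^2=K_{\Xc}^2$, since pullback along a birational morphism of smooth surfaces preserves the intersection pairing. Substituting,
\[
K_{\Yc}^2=(f^*K_{\Xc})^2+2a\,(f^*K_{\Xc}\cdot E)+a^2E^2=K_{\Xc}^2+0-a^2,
\]
which is the claim.

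The one point worth a remark — rather than a genuine obstacle — is that $a$ is in general a rational number ($1$, $1/e$, or $1/(ne)$ according to the position of $p$ relative to $D$), so the computation takes place with $\Qb$-divisors. Since the intersection pairing extends bilinearly to $\Qb$-coefficients, both the projection formula and the value $E^2=-1$ remain valid, and no additional care is needed. In effect, all of the content of the proposition already resides in Lemma \ref{coefficient of E}; the self-intersection formula is then a purely formal consequence.
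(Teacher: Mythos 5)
Your proof is correct and follows the same route as the paper: square the numerical equivalence $K_{\Yc}\equiv f^*K_{\Xc}+aE$ and use $(f^*K_{\Xc})\cdot E=0$, $(f^*K_{\Xc})^2=K_{\Xc}^2$, and $E^2=-1$. You simply spell out the projection-formula justifications and the $\Qb$-divisor remark that the paper leaves implicit.
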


\begin{proof}
We have $K_{\Yc}\equiv f^*(K_{\Xc})+aE$, therefore $$K_{\Yc}^2= (f^*(K_{\Xc})+aE)^2=f^*(K_{\Xc})^2+2af^*(K_{\Xc}).E-a^2=K_{\Xc}^2-a^2$$
\end{proof}

Before classifying TdPOs, we state some useful results which make our calculations easier. 
In this section and Section \ref{Minimal Models of CdPOs}, we will see that for any order $\Xc$ that we work with all the degrees are equal, say to $e$, and there is an effective divisor $M$ such that the ramification divisor $D\sim -K_Z+M$. In particular $K_{\Xc}=\frac{1}{e}(K_Z+(e-1)M)$. Then by Lemma \ref{coefficient of E} we get the following proposition.
\begin{proposition}\label{D=Z+M}
Let $\Xc$ be a terminal order over $Z$ with ramification divisor $D$ and assume there exists an effective divisor $M$ such that $D\sim -K_Z+M$. If $\Yc$ is the terminal order obtained from $\Xc$ by a blowup $f:Z'\rightarrow Z$ at points in the ramification locus. Then $K_{\Yc}=\frac{1}{e}\left(K_{Z'}+(e-1)f^*M\right)$
\end{proposition}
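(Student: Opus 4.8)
The plan is to reduce the statement to the single formula of Lemma~\ref{coefficient of E} together with the two standing hypotheses. First I would record what those hypotheses give for $\Xc$ itself. Since all ramification degrees equal $e$, the discriminant is $\Delta = \left(1-\frac{1}{e}\right)D$, and substituting $D \sim -K_Z + M$ into $K_{\Xc} = K_Z + \Delta$ and collecting the $K_Z$ terms yields
\[
K_{\Xc} = K_Z + \left(1-\frac{1}{e}\right)(-K_Z + M) = \frac{1}{e}\bigl(K_Z + (e-1)M\bigr),
\]
which is precisely the form already announced in the text. This is the expression I would carry through the blowup.

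Next I would treat a single blowup $f:Z'\rightarrow Z$ at one point $p$ of the ramification locus. The crucial observation is that, because every ramification degree equals $e$, the coefficient $a$ of Lemma~\ref{coefficient of E} is $\frac{1}{e}$ in both relevant cases: at a smooth point of $D$ this is case~(2), and at a crossing point of $D$ this is case~(3) with $n=1$, since the two crossing curves share the common degree $e$. Hence $K_{\Yc} = f^*K_{\Xc} + \frac{1}{e}E$. Substituting the formula for $K_{\Xc}$, moving the pullback inside by linearity, and using the standard relation $K_{Z'} = f^*K_Z + E$ to replace $f^*K_Z$ by $K_{Z'} - E$, the two $\frac{1}{e}E$ contributions cancel and I obtain
\[
K_{\Yc} = \frac{1}{e}\bigl(f^*K_Z + (e-1)f^*M + E\bigr) = \frac{1}{e}\bigl(K_{Z'} + (e-1)f^*M\bigr),
\]
which is the claim for one blowup.

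For a blowup at several points, or for an iterated sequence, I would argue by induction with the single-blowup computation as the inductive step. What must be checked to make the induction run is that the standing hypotheses are stable under one blowup, with $M$ replaced by $f^*M$; that is, that the ramification divisor $D'$ of $\Yc$ again has all degrees $e$ and satisfies $D' \sim -K_{Z'} + f^*M$. Reading off $\Delta'$ from Lemma~\ref{coefficient of E}, the exceptional curve is \emph{unramified} when $p$ is a smooth point of $D$, so $D'$ is the strict transform and $D' \sim f^*D - E$; whereas at a crossing point the exceptional curve becomes ramified of degree $e$, and since the node has multiplicity $2$ the strict transform is $f^*D - 2E$, so $D' = (f^*D - 2E) + E \sim f^*D - E$ once more. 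In either case $D' \sim f^*D - E \sim -K_{Z'} + f^*M$, using $f^*D \sim -f^*K_Z + f^*M$ and $f^*K_Z = K_{Z'} - E$, and all degrees remain $e$. Thus the hypotheses propagate, the single-blowup formula applies at every stage, and the successive pullbacks compose to $f^*M$ for the full morphism.

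The arithmetic is routine; the one point needing genuine care — and the main obstacle — is this stability claim, namely tracking how the ramification divisor and its degrees behave under blowup (in particular deciding exactly when the exceptional curve becomes ramified), since it is precisely the preservation of ``$D \sim -K_Z + M$ with constant degree $e$'' that licenses the inductive use of the single-blowup case.
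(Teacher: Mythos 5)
Your proposal is correct and follows essentially the same route as the paper, which simply asserts the proposition as a consequence of Lemma~\ref{coefficient of E} (with $a=\frac{1}{e}$ at points of the ramification locus) applied to $K_{\Xc}=\frac{1}{e}(K_Z+(e-1)M)$ and the relation $K_{Z'}=f^*K_Z+E$. Your additional verification that the hypotheses ``$D\sim -K_Z+M$ with all degrees equal to $e$'' propagate under a blowup (tracking whether the exceptional curve becomes ramified) is a detail the paper leaves implicit, and it is carried out correctly.
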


In Proposition \ref{D=Z+M}, if $M=0$ for some terminal order $\Xc$, i.e. the ramification divisor of $\Xc$ is anti-canonical. Then $K_{\Xc}=\dfrac{1}{e}K_Z$ and we get the following result.

\begin{proposition}\label{Z iff X}
Let $\Xc$ be a terminal order over $Z$ with anti-canonical ramification divisors, meaning that $D\sim -K_Z$, where all the ramification degrees are equal, say to $e$. Then $K_{\Xc}=\dfrac{1}{e}K_Z$ and $\Xc$ is del Pezzo if and only if $Z$ is del Pezzo. Furthermore, blowing up points of the ramification locus leaves the ramification anti-canonical.
\end{proposition}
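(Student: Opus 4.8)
The plan is to dispatch the three assertions in order. The first two are formal. For $K_{\Xc}=\tfrac1e K_Z$, I start from $K_{\Xc}=K_Z+\Delta$; since all ramification degrees equal $e$ the discriminant is $\Delta=(1-\tfrac1e)D$, and $D\sim -K_Z$ gives $K_{\Xc}=K_Z-(1-\tfrac1e)K_Z=\tfrac1e K_Z$. (This is just the $M=0$ case of Proposition \ref{D=Z+M}, so I would either cite it or reproduce this one line.) For the del Pezzo equivalence, recall that $\Xc$ (resp.\ $Z$) is del Pezzo exactly when $-K_{\Xc}$ (resp.\ $-K_Z$) is ample; since $-K_{\Xc}=\tfrac1e(-K_Z)$ with $e$ a positive integer, Nakai--Moishezon together with the scalings $(-K_{\Xc})^2=\tfrac1{e^2}(-K_Z)^2$ and $-K_{\Xc}\cdot C=\tfrac1e(-K_Z)\cdot C$ for every irreducible curve $C$ show that the two positivity conditions hold simultaneously.

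For the stability of anti-canonical ramification under blowup it suffices to treat a single blowup $f:Z'\to Z$ at a point $p$ of the ramification locus and then iterate. Combining $K_{Z'}=f^*K_Z+E$ with Lemma \ref{coefficient of E} and solving for the new discriminant gives $\Delta'=f^*\Delta-(1-\tfrac1e)E$, where $a=\tfrac1e$ in both relevant cases since equal ramification degrees force $n=1$ at a crossing. If $p$ is a smooth point of $D$ then $f^*D=D'+E$, so $\Delta'=(1-\tfrac1e)D'$ and the new ramification divisor is the strict transform $D'=f^*D-E\sim f^*(-K_Z)-E=-K_{Z'}$. If $p$ is a node then $f^*D=D'+2E$, so $\Delta'=(1-\tfrac1e)(D'+E)$; here $E$ itself becomes ramified and $D'+E=f^*D-E\sim -K_{Z'}$. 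In either case the degrees remain $e$ and the ramification is again anti-canonical, so the property persists through any sequence of such blowups.

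I expect the third assertion to be the only genuine obstacle: one must correctly decide whether the exceptional curve $E$ joins the new ramification divisor, which is governed by the multiplicity $m$ of $D$ at $p$ (through $f^*D=D'+mE$) weighed against the coefficient $a$ of Lemma \ref{coefficient of E}. Once this bookkeeping is matched against $-K_{Z'}=f^*D-E$, the smooth case ($m=1$, with $E$ unramified) and the nodal case ($m=2$, with $E$ ramified) both collapse to the anti-canonical class, which is precisely what drives the proposition.
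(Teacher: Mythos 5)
Your proposal is correct and follows the same route the paper intends: the identity $K_{\Xc}=\tfrac1e K_Z$ and the del Pezzo equivalence are the "straightforward" part, and the blowup stability is exactly the $M=0$ case of Proposition \ref{D=Z+M}, driven by Lemma \ref{coefficient of E} with $a=\tfrac1e$. You simply supply the multiplicity bookkeeping ($f^*D=D'+mE$, with $E$ ramified precisely in the nodal case) that the paper leaves to the reader, and that bookkeeping is accurate.
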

\begin{proof}
The first part of the proposition is straightforward and left to the reader. The second part is a specific case of Proposition \ref{D=Z+M} where $M=0$.

% Let $\Yc$ be the terminal order over $Z'$ obtained from a terminal order $\Xc$ over $Z$  by blowing up points $\Sigma=\{p_1, \ldots, p_n\}$ of an anti-canonical ramification divisors where $K_{\Xc}=\dfrac{1}{e}K_Z$. Then, 
%\begin{align*}
%K_{\Yc}=&K_{Z'}+\Delta_{Z'}\\
%=&f^*(K_Z+\Delta_Z)+\dfrac{1}{e}\left(E_{p_1}+\cdots+E_{p_n}\right)\\
%=&\dfrac{1}{e}\left(f^*(K_Z)+E_{p_1}+\cdots+E_{p_n}\right)\\
%=&\dfrac{1}{e}K_{Z'},
%\end{align*}
%and it shows the second part of the proposition. Now let $C$ be a curve where $K_{\Yc}C=0$, then $K_{Z'}C=0$ meaning that $C$ is a $(-2)$-curve.
\end{proof}

%We can generalize Proposition \label{Z iff X} to TdPOs for which $M\neq0$.

\begin{proposition}\label{Y dP M}
Let $D\subset Z$ be ramification divisors of an order and let $f:Z'\rightarrow Z$ be blowups at  $\Sigma\subset D$. If $\Yc$ is del Pezzo over $Z'$ with ramification divisors $D'\sim -K_{Z'}+M$ for some effective divisor $M$, then the following condition holds. Let $C\subset Z'$ be an effective curve. Then the multiplicity $m=mult_{\Sigma}f_*C<2-(e-1)M.f_*C+f_*C^2$.
\end{proposition}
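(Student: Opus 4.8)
The plan is to read the del Pezzo hypothesis on $\Yc$ as a single numerical inequality and then transport it down to $Z$ through the blowup $f$, closing with adjunction. First I would record what del Pezzo means numerically: since $-K_{\Yc}$ is ample, every irreducible effective curve $C\subset Z'$ satisfies $K_{\Yc}\cdot C<0$. Because the ramification degrees are all equal to $e$ and $D'\sim -K_{Z'}+M$, the same computation as in Proposition \ref{D=Z+M} gives
\[
K_{\Yc}=K_{Z'}+\tfrac{e-1}{e}D'=\tfrac{1}{e}\big(K_{Z'}+(e-1)M\big),
\]
where $M$ is pulled back from $Z$, so that $M\cdot C=M\cdot f_*C$ by the projection formula. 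Clearing the positive factor $e$, the condition $K_{\Yc}\cdot C<0$ becomes $K_{Z'}\cdot C+(e-1)M\cdot C<0$.

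Next I would push the canonical term down to $Z$. Writing $E_1,\dots,E_n$ for the total transforms of the exceptional curves, so that $K_{Z'}=f^*K_Z+\sum_i E_i$ and $C=f^*f_*C-\sum_i m_iE_i$ with $m=\operatorname{mult}_{\Sigma}f_*C=\sum_i m_i$, the relations $E_i\cdot E_j=-\delta_{ij}$ and $E_i\cdot f^*\alpha=0$ collapse a short expansion to
\[
K_{Z'}\cdot C=K_Z\cdot f_*C+m.
\]
Substituting this together with $M\cdot C=M\cdot f_*C$ into the inequality above yields $m< -K_Z\cdot f_*C-(e-1)M\cdot f_*C$.

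Finally I would apply adjunction on the smooth surface $Z$ to the curve $f_*C$, namely $(K_Z+f_*C)\cdot f_*C=2p_a(f_*C)-2$, which rearranges to $-K_Z\cdot f_*C=2+(f_*C)^2-2p_a(f_*C)$. Feeding this in and using that $f_*C$ is irreducible, so $p_a(f_*C)\ge 0$ and the term $-2p_a(f_*C)$ is non-positive, I discard it to obtain
\[
m<2-(e-1)M\cdot f_*C+(f_*C)^2,
\]
which is the asserted bound.

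The hard part is purely in the bookkeeping: getting both the sign and the coefficient of the multiplicity term correct in $K_{Z'}\cdot C=K_Z\cdot f_*C+m$, since the entire statement hinges on that identity, and in correctly identifying $M$ as pulled back from $Z$ so that the projection formula applies. The one hypothesis I must flag is that $C$ (equivalently $f_*C$) be irreducible and not $f$-exceptional, which is exactly what guarantees $p_a(f_*C)\ge 0$; the contracted case is trivial, as there $f_*C=0$ and $m=0<2$.
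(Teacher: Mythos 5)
Your proof is correct and is essentially the paper's argument: both expand $eK_{\Yc}\cdot C = K_{Z'}\cdot C + (e-1)f^*M\cdot C$ and close with the genus formula, the only cosmetic difference being that you apply adjunction to $f_*C$ on $Z$ (converting $K_{Z'}\cdot C$ into $K_Z\cdot f_*C$ plus multiplicities) while the paper applies it to $C$ on $Z'$ (converting $C^2$ into $(f_*C)^2$ minus multiplicities). Your explicit discarding of the nonnegative genus term is in fact slightly more careful than the paper's implicit assumption that the curves in question are rational.
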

\begin{proof}
The genus formula and Proposition \ref{D=Z+M} give
\begin{align*}
eK_{\Yc}.C=K_{Z'}.C+(e-1)f^*M.C&=-2-C^2+(e-1)M.f_*C\\&=-2-f_*C^2+m+(e-1)M.f_*C.
\end{align*}
\end{proof}

\begin{remark}\label{effective cone}
The necessary criteria in Proposition \ref{Y dP M} is exactly sufficient and need only be checked for generators $C$ of effective cone of $Z'$. Considering classification of del Pezzo surfaces in \cite[Theorem 3.4]{hidaka1981normal}, for any  del Pezzo surface $Z'$, the degree $\deg Z'\leq 9$. If $\deg Z'=9$, then $Z'=\Pb^2$, where the effective cone is generated by $H$, for a line $H$. If $\deg Z'=8$, then $Z'=\PP, \Fb_1$, or $\Fb_2$ and in these cases the effective cone is generated by $0$, $(-1)$, and $(-2)$-curves.  For del Pezzo surfaces with degree $\leq7$  the effective cone  is generated by $(-1)$ and $(-2)$-curves, see \cite[Theorem 3.10]{derenthal2008nef}.
\end{remark}

\subsection{TdPOs Over $\Pb^2$ and Its Blowups }
Let $\Xc$ denote a del Pezzo order over $\Pb^2$ and $D=\cup D_i$ denotes the ramification divisors of $\Xc$ with ramification degrees $\{e_i\}$. Using the results in \cite{CI2} and \cite{chan2003pezzo}, we have $3\leq \deg(D)\leq 5$. 
Additionally for ramification divisors $D=\cup D_i$,  all the ramification degrees $e_i$ are equal.
 
\begin{table}[H]
    \centering
\begin{tabular}{c|c}
$deg(D)$ & $e$ \\
\hline
$3$ & $\geq 2$\\
$4$ & $2, 3$\\
$5$ & $2$
\end{tabular}
\caption{Ramification divisors and degrees of terminal orders over $\Pb^2$}
\label{table}
\end{table}

Let $\Xc$ be a terminal order over $\Pb^2$ and let $D$ be ramification divisors  with the ramification degree $e$. Then,
\[
(K_{\Pb^2}+\Delta_{\Pb^2})^2=\left(-3H+\left(1-\frac{1}{e}\right)dH\right)^2.
\]
Thus
\begin{equation}\label{self intersect P^2}
(K_{\Pb^2}+\Delta_{\Pb^2})^2=\left\{ \begin{array}{l}
         \dfrac{9}{e^2}~~~~~~~~~~~~~~~\text{if}~~~d=3\\
        1-\dfrac{8}{e}+\dfrac{16}{e^2}~~~\text{if}~~~d=4\\
        \dfrac{1}{4}~~~~~~~~~~~~~~~~\text{if}~~~d=5\end{array} \right.
\end{equation}
%Now if $d=3$, 
%\begin{equation}\label{self intersect P^2 d3}(K_{\Pb^2}+\Delta_{\Pb^2})^2=\frac{9}{e^2}\end{equation}
%if $d=4$,  
%\begin{equation}\label{self intersect P^2 d4}(K_{\Pb^2}+\Delta_{\Pb^2})^2=1-\frac{8}{e}+\frac{16}{e^2}\end{equation}
%if $d=5$, considering $e=2$, we get
%\begin{equation}\label{self intersect P^2 d5}(K_{\Pb^2}+\Delta_{\Pb^2})^2=\frac{1}{4}\end{equation}

\subsubsection*{Degree $3$ ramification divisors over $\Pb^2$}

Let $\Xc$ be a maximal order over $\Pb^2$. Also, we let $D$ be ramification divisors of degree $3$ over $\Pb^2$ and we define $\Delta=\left(1-\frac{1}{e}\right)D$. Then $D$ is of one of the types in Figure \ref{Cubic ramification configurations}.  The number $e$  represents the ramification degree of the curves  and $\mu$ is any generator of the cyclic group $\frac{\Zb}{e\Zb}$ and represents the ramification index of the  curves at the branch points.

\begin{figure}[H]
   \centering
\begin{multicols}{4}

\begin{tikzpicture}
\node (1) at (0.7,0) {e};
\node (1) at (0.7,-1) {e};
\node (1) at (1.6,-0.2) {\bmu};
\node (1) at (1.8,-0.7) {-\bmu};
\node (1) at (2.4,0.2) {\bmu};
\node (1) at (2.3,0.7) {-\bmu};
\coordinate (O) at (1,0);
  \coordinate (A) at (3,0);
\coordinate (B) at (1,-1);
  \coordinate (C) at (3,1);
  \draw[] (B)--(C);
  \draw[] (O) to [bend left=90] (A);
  \draw[] (O) to [bend right=90] (A);  
 \end{tikzpicture}\\
 Transverse
 line \& conic
 
\columnbreak

\begin{tikzpicture}[scale=0.8]
\node (1) at (0,0) {e};
\node (1) at (2,0) {e};
\node (1) at (0.9,-1.3) {e};
\node (1) at (-0.5,-0.8) {-\bmu};
\node (1) at (-0.6,-1.3) {\bmu};
\node (1) at (1.5,-0.8) {\bmu};
\node (1) at (2.1,-1.4) {-\bmu};
\node (1) at (0.6,0.75) {\bmu};
\node (1) at (1.4,0.75) {-\bmu};
\draw [black!90]  (-1,-1) -- (3,-1); 
\draw [black] (-0.5,-1.5) -- (1.5,1.3);
\draw [black] (0.5,1.3) -- (2.5,-1.5);
\end{tikzpicture}\\
Transverse lines
\columnbreak

 \begin{tikzpicture}[scale=0.8]
\node (1) at (-1.5,0) {e};
\draw [black, scale=1] plot [smooth, tension=0.8] coordinates { (1.5,1.5) (0.7,0.4)(-0.5,0.8) (-1.2,0) (-0.5,-0.8) (0.7,-0.4)(1.5,-1.5)};
\end{tikzpicture}\\
 A smooth cubic
\columnbreak

\begin{tikzpicture}
\node (1) at (-1.7,0) {e};
\node (1) at (0.2,0.4) {\bmu};
\node (1) at (0.2,-0.4) {-\bmu};
\draw [black, scale=1.2] plot [smooth, tension=1] coordinates { (0.8,0.8) (0,0) (-1,-0.5) (-1,0.5) (0,0) (0.8,-0.8)};
\end{tikzpicture}\\
 A nodal cubic
\end{multicols}
    
\caption{Cubic ramification configurations}
\label{Cubic ramification configurations}
 \end{figure}
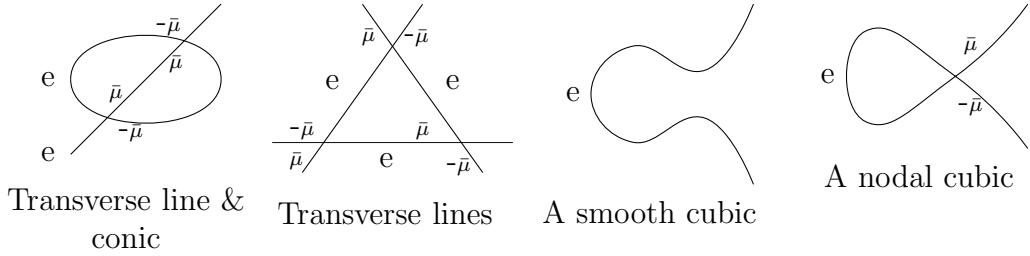

\begin{lemma}\label{blowup out of D, e=2}
Let $\Xc$ be a terminal order over $\Pb^2$ and let $D$ be ramification divisors of degree $3$ with the ramification degree $e$. Also let $f:Z\rightarrow \Pb^2$ be a blowup at a point not in $D$. Then the order over $Z$  is del Pezzo if and only if $e=2$.
\end{lemma}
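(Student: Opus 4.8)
The plan is to reduce everything to one self-intersection computation together with a check on the generators of the effective cone of $Z=\mathrm{Bl}_p\Pb^2=\Fb_1$. First I would record that a degree-$3$ ramification divisor on $\Pb^2$ is anti-canonical, $D\sim 3H\sim -K_{\Pb^2}$, so by Proposition \ref{Z iff X} we have $K_{\Xc}=\tfrac1e K_{\Pb^2}$ and hence $K_{\Xc}^2=\tfrac{9}{e^2}$ (this is equation \ref{self intersect P^2} with $d=3$). Since $p\notin D$, Lemma \ref{coefficient of E}(1) gives $a=1$ in $K_{\Yc}\equiv f^*K_{\Xc}+aE$, and blowing up away from $D$ leaves the ramification degree equal to $e$, so Proposition \ref{decrease of K_X^2} yields $K_{\Yc}^2=\tfrac{9}{e^2}-1$.

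For the necessary direction I would argue purely numerically: if $\Yc$ is del Pezzo then $-K_{\Yc}$ is ample, so in particular $K_{\Yc}^2>0$, forcing $\tfrac{9}{e^2}>1$, i.e. $e<3$. Since $e\geq 2$ for degree-$3$ ramification (Table \ref{table}), this leaves only $e=2$.

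For the converse I would set $e=2$ and verify ampleness of $-K_{\Yc}$ directly. Writing $\tilde H=f^*H$ and $E$ for the exceptional $(-1)$-curve, one finds $K_{\Yc}=-\tfrac32\tilde H+E$, so $K_{\Yc}^2=\tfrac54>0$. By Remark \ref{effective cone} the effective cone of $\Fb_1$ is generated by the $(-1)$-curve $E$ and the $0$-curve $F=\tilde H-E$ (the strict transform of a line through $p$), so by Nakai--Moishezon it suffices to check $K_{\Yc}\cdot E<0$ and $K_{\Yc}\cdot F<0$; a one-line computation gives $K_{\Yc}\cdot E=-1$ and $K_{\Yc}\cdot F=-\tfrac12$, both negative. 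Hence $-K_{\Yc}$ is ample and $\Yc$ is del Pezzo, which closes the equivalence.

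The only genuine subtlety is the converse. The necessary condition falls out of the self-intersection alone, but sufficiency requires $K_{\Yc}\cdot C<0$ for \emph{every} effective curve $C$, not merely $K_{\Yc}^2>0$. What makes this tractable is that $Z=\Fb_1$ has a two-generator effective cone (Remark \ref{effective cone}), so the infinitely many curves collapse to the two intersection checks above. I would therefore be careful to identify $Z$ as $\Fb_1$ and to pin down its effective cone correctly, since it is precisely this finiteness that must be re-examined once one iterates the blowup further.
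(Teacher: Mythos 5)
Your proposal is correct and follows essentially the same route as the paper: the necessity of $e=2$ comes from $K_{\Yc}^2=\tfrac{9}{e^2}-1>0$ via Proposition \ref{decrease of K_X^2}, and sufficiency from checking $K_{\Yc}\cdot C<0$ on effective curves. The only (harmless) difference is in that last check: the paper computes $K_{\Yc}\cdot C=-\tfrac{3}{2}\deg f_*C+\operatorname{mult}_p f_*C\le-\tfrac12\deg f_*C$ directly for an arbitrary effective curve, whereas you reduce to the two generators $E$ and $\tilde H-E$ of the effective cone of $\Fb_1$; both are valid.
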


\begin{proof}
Let $f:Z\rightarrow \Pb^2$ be a blowup at a point $p\notin D$. Then by Proposition \ref{decrease of K_X^2} we have
\[
(K_{Z}+\Delta_Z)^2=\dfrac{9}{e^2}-1.
\]
Therefore $\dfrac{9}{e^2}-1>0$ if and only if $e<3$. 

  Now let $e=2$ and let $C$ be an effective curve in $Z$. Then by 
\begin{align*}
(K_{Z}+\Delta_Z)C&=(f^*(K_{\Pb^2}+\Delta_{\Pb^2})+E)C\\
&=f^*(K_{\Pb^2}+\Delta_{\Pb^2})C+EC\\
&=\left(-3H+\left(1-\dfrac{1}{2}\right)3H\right)f_{*}C+EC\\
&=-\dfrac{3}{2}d+r,
\end{align*}
where $d=\deg(f_*C)$ and $r$ is the multiplicity of $f_*C$ at $p$ which is not greater than $d$. So $(K_{Z}+\Delta_Z)C<0$.
\end{proof}

\begin{proposition}\label{blowup 2 points out of D}
Let $\Xc$ be a TdPO over $\Pb^2$ and let $D$ be ramification divisors  of degree $3$. Also Let $f:Z\rightarrow \Pb^2$ be two blowups at  the points $p$ and $q$ and let $\Yc$ be a maximal order over $Z$ obtained by the blowups. If any of $p$ or $q$ is not in $D$, then $\Yc$ is not del Pezzo.
\end{proposition}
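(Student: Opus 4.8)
The plan is to reduce the entire statement to two intersection numbers on $Z$: the self-intersection $K_{\Yc}^2$, and $K_{\Yc}\cdot\tilde\ell$, where $\tilde\ell$ is the strict transform of the line through $p$ and $q$. Throughout, all ramification degrees are equal to a single $e$ (Table \ref{table}), and by \eqref{self intersect P^2} the base order satisfies $K_{\Xc}^2=9/e^2$, with $K_{\Xc}\equiv-\tfrac{3}{e}H$ for a line class $H$.

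First I would split $f$ into two successive blowups and record the exceptional coefficients $a_1,a_2$ supplied by Lemma \ref{coefficient of E}: each $a_i$ equals $1$ when the corresponding centre lies off $D$ and $1/e$ when it lies on $D$ (a node contributes $1/e$ as well, since the two branches have the equal degree $e$). The hypothesis that $p$ or $q$ avoids $D$ forces at least one $a_i=1$. Applying Proposition \ref{decrease of K_X^2} twice then yields $K_{\Yc}^2=9/e^2-a_1^2-a_2^2$. For $e\geq 3$ this already finishes the argument: $9/e^2\leq 1$ while $a_1^2+a_2^2\geq 1+1/e^2>1$, so $K_{\Yc}^2<0$, and a del Pezzo order must have positive canonical self-intersection.

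The remaining case $e=2$ is precisely the one where the self-intersection stays positive ($9/4-a_1^2-a_2^2>0$ in every subcase), so here I would instead exhibit a curve violating ampleness of $-K_{\Yc}$. Taking $\tilde\ell\equiv H-E_1-E_2$, the strict transform of the line through $p$ and $q$, and writing $K_{\Yc}\equiv-\tfrac32 H+a_1E_1+a_2E_2$, one computes $K_{\Yc}\cdot\tilde\ell=a_1+a_2-\tfrac32$. Since $e=2$ gives $a_i\in\{1,\tfrac12\}$ with at least one $a_i=1$, we have $a_1+a_2\geq\tfrac32$, hence $K_{\Yc}\cdot\tilde\ell\geq 0$. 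As $\tilde\ell$ is an irreducible curve, this contradicts $-K_{\Yc}$ being ample, so $\Yc$ is not del Pezzo.

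The hard part will be making the $e=2$ argument uniform over infinitely near configurations, i.e.\ when $q$ lies on the exceptional curve over $p$. There ``the line through $p$ and $q$'' should be read as the line through $p$ in the tangent direction cut out by $q$; I would verify that its strict transform still has class $H-E_1-E_2$ (with $E_1$ now the total transform of the first exceptional divisor) and that the two displayed identities for $K_{\Yc}^2$ and $K_{\Yc}\cdot\tilde\ell$ hold verbatim in this basis. Note that the hypothesis already pins down at least one coefficient: the centre off $D$ contributes $a=1$ directly, so the inequality $a_1+a_2\geq\tfrac32$ is robust regardless of the ordering or the position of the second centre. Once the infinitely near bookkeeping is checked, the inequalities above close all subcases, consistent with Lemma \ref{blowup out of D, e=2}, which permits a single blowup off $D$ only when $e=2$.
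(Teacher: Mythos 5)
Your proof is correct and follows essentially the same route as the paper: the key step in both is that the strict transform $\tilde\ell$ of the line through $p$ and $q$ satisfies $K_{\Yc}\cdot\tilde\ell=a_1+a_2-\tfrac32\geq 0$ once $e=2$ and at least one $a_i=1$. The only (minor) divergence is that you dispatch $e\geq 3$ by computing $K_{\Yc}^2=9/e^2-a_1^2-a_2^2<0$ directly, whereas the paper reduces to Lemma \ref{blowup out of D, e=2}; your explicit treatment of the infinitely near configuration is a welcome extra check that the paper leaves implicit.
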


\begin{proof}
By Lemma \ref{blowup out of D, e=2} we know if $e\neq2$, then the order is not del Pezzo. So let $e=2$ and without loss of generality, let us assume $p\notin D$. Then
\[
K_Z+\Delta_Z\equiv f^*(K_{\Pb^2}+\Delta_{\Pb^2})+E_p+aE_q,
\]
where $a\geq\dfrac{1}{2}$ depending on $q$ whether it is in the ramification divisors or not. Now let  $C\in Z$ be a line going through $p$ and $q$. Then
\begin{align*}
(K_{Z}+\Delta_Z)C&=(f^*(K_{\Pb^2}+\Delta_{\Pb^2})+E_p+aE_q)C\\
&=f^*(K_{\Pb^2}+\Delta_{\Pb^2})C+E_pC+aE_qC\\
&=\left(-3H+\left(1-\dfrac{1}{2}\right)3H\right)f_*C+E_pC+aE_qC\\
&=-\dfrac{3}{2}+1+a\geq0,
\end{align*}
 i.e. $\Yc$ is not del Pezzo.
\end{proof}

Now  the only remaining case for degree $3$ ramification divisors to be classified is blowups of points sitting all in $D$.
\begin{definition}
 Let $\Sigma=\{p_1, \cdots, p_n\}$ be a set of distinct points of $\Pb^2$.  $\Sigma$ is in general position if
 \begin{enumerate}
  \item No three points lie on a line;
  \item No six points lie on a conic.
 \end{enumerate}
\end{definition}

\begin{theorem}\label{classification of deg D=3}
Let $\Xc$ be a TdPO over $\Pb^2$ with  ramification divisors $D$ of degree $3$ and  ramification degrees $e$. Let $f:Z\rightarrow \Pb^2$ be a sequence of blowups at the points $\Sigma=\{p_1, \cdots, p_n\}$. Then the associated maximal order $\Yc$ over $Z$ is del Pezzo if and only if one of the following occurs.
\begin{enumerate}
\item 
$\Sigma\subset D$, $\Sigma$ is in general position and $n<9$;
 \item 
 $\Sigma\not\subset D$, $n=1$ and $e=2$.
\end{enumerate}
\end{theorem}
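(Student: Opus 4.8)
The plan rests on the observation that for $\deg D = 3$ the ramification is anti-canonical: $D \sim 3H \sim -K_{\Pb^2}$, so in the language of Proposition \ref{D=Z+M} we are in the case $M = 0$. I would organize everything around the dichotomy $\Sigma \subset D$ versus $\Sigma \not\subset D$, since this is exactly what controls the exceptional coefficients $a_i$ of Lemma \ref{coefficient of E}.

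\emph{Case $\Sigma \subset D$.} Because every blown-up point lies in the ramification locus, Proposition \ref{Z iff X} (equivalently Proposition \ref{D=Z+M} with $M = 0$) gives $K_{\Yc} = \frac{1}{e} K_Z$. Hence $-K_{\Yc}$ is ample if and only if $-K_Z$ is ample, i.e. if and only if $Z$ is a del Pezzo surface. By the classification of del Pezzo surfaces (as in Remark \ref{effective cone}), the blowup of $\Pb^2$ at $\Sigma$ is del Pezzo precisely when $n \le 8$ and $\Sigma$ is in general position, which is exactly condition (1). I would make this self-contained through Proposition \ref{Y dP M} and Remark \ref{effective cone}: with $M = 0$ the criterion reduces to $m < 2 + (f_*C)^2$ for each generator $C$ of the effective cone. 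Evaluating this on lines, conics, and (at $n = 8$) nodal cubics reproduces ``no three collinear, no six on a conic'' and rules out the degenerate cubic, while any $(-1)$-curve satisfies the inequality automatically; so the general position hypotheses are both necessary and sufficient.

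\emph{Case $\Sigma \not\subset D$.} Fix $p \in \Sigma$ with $p \notin D$, so $a_p = 1$ by Lemma \ref{coefficient of E}. If $n = 1$, Lemma \ref{blowup out of D, e=2} gives del Pezzo if and only if $e = 2$, which is condition (2). If $n \ge 2$ I would show $\Yc$ is never del Pezzo by exhibiting a curve on which $-K_{\Yc}$ fails to be positive: let $C$ be the strict transform of a line $\ell$ through $p$ and a second point $q$ of $\Sigma$ (when $q$ is infinitely near $p$, take $\ell$ in its tangent direction). Writing $K_Z + \Delta_Z = f^*(K_{\Pb^2} + \Delta_{\Pb^2}) + \sum_i a_i E_i$ and using $K_{\Pb^2} + \Delta_{\Pb^2} = \frac{1}{e} K_{\Pb^2} = -\frac{3}{e}H$, the computation gives $(K_Z + \Delta_Z).C = -\frac{3}{e} + a_p + a_q = -\frac{3}{e} + 1 + a_q \ge 1 - \frac{2}{e} \ge 0$ for every $e \ge 2$, since $a_q \ge \frac{1}{e}$. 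Thus $-K_{\Yc}$ is not ample, so no such configuration appears in the list; together with the two successful cases this yields the stated ``if and only if.''

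The step I expect to be the main obstacle is the $n \ge 2$, $\Sigma \not\subset D$ computation in the presence of infinitely near points: I must choose the correct line $\ell$ (its tangent direction when the second point is infinitely near $p$) and verify that the coefficient $a_q$ and the intersection $E_q.C$ come out so that the inequality $-\frac{3}{e} + a_p + a_q \ge 0$ persists, including when $\ell$ happens to meet further points of $\Sigma$. The surface-level bookkeeping of the classes $H, E_1, \dots, E_n$ and of which exceptional curves $\ell$ meets is routine, but it is precisely where the care is required.
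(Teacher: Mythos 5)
Your proposal follows essentially the same route as the paper: the case $\Sigma\subset D$ is handled by Proposition \ref{Z iff X} (anti-canonical ramification, $K_{\Yc}=\frac{1}{e}K_Z$) together with the classification of del Pezzo blowups of $\Pb^2$, and the case $\Sigma\not\subset D$ reduces to Lemma \ref{blowup out of D, e=2} for $n=1$ and, for $n\geq 2$, to exactly the line-through-two-points computation that constitutes Proposition \ref{blowup 2 points out of D}. Your only deviations are cosmetic: you re-derive that proposition uniformly in $e$ (using $a_q\geq\frac{1}{e}$) instead of first reducing to $e=2$, and you flag the infinitely-near bookkeeping that the paper leaves implicit.
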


\begin{proof}
Here the ramification divisor is anti-canonical  and therefore the first case is proved by Proposition \ref{Z iff X} and classification of del Pezzo blowups of $\Pb^2$ given in \cite{hidaka1981normal}.  The second case is Proposition \ref{blowup 2 points out of D}.
\end{proof}

\subsubsection*{Degree $4$ and $5$ Ramification Divisors}
\begin{theorem}\label{blowups degree 4}
 Let $\Xc$ be a TdPO over $\Pb^2$ with ramification divisors $D=\cup D_i$  of degree $4$. Also, let  $f:Z\rightarrow \Pb^2$ be a blowup at a point $p$ and let $\Yc$ be the associated maximal order over $Z$. Then $\Yc$ is del Pezzo if and only if $p\in D$ and $e=2$. Moreover, $\Yc$ can not be blown up to a del Pezzo order. 
\end{theorem}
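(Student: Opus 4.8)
The plan is to split the biconditional by the sign of $K_{\Yc}^2$, establish the positive case by a Nakai--Moishezon check with the machinery already assembled, and settle the final ``moreover'' clause by exhibiting an explicit obstructing line together with the contraction Lemma \ref{contraction of del pezzo orders}. I would first dispose of the ``only if'' direction purely numerically. By the classification recalled in Table \ref{table} the common ramification degree satisfies $e\in\{2,3\}$, and equation \eqref{self intersect P^2} gives $K_{\Xc}^2=1-\tfrac{8}{e}+\tfrac{16}{e^2}$, that is $K_{\Xc}^2=1$ for $e=2$ and $K_{\Xc}^2=\tfrac19$ for $e=3$. Applying Proposition \ref{decrease of K_X^2} with the values of $a$ from Lemma \ref{coefficient of E} (namely $a=1$ if $p\notin D$, and $a=\tfrac1e$ if $p\in D$ --- smooth or nodal, the latter because all degrees equal $e$ forces $n=1$), I would tabulate $K_{\Yc}^2=K_{\Xc}^2-a^2$ in every case and observe that the only instance with $K_{\Yc}^2>0$ is $e=2$, $p\in D$, where $K_{\Yc}^2=\tfrac34$; all other cases give $K_{\Yc}^2\le 0$ and hence cannot be del Pezzo. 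This forces $e=2$ and $p\in D$.

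For the converse I would verify ampleness of $-K_{\Yc}$ via the already-packaged criterion. When $e=2$ and $\deg D=4$ we have $\Delta_{\Pb^2}=\tfrac12 D\sim 2H$, so $D\sim -K_{\Pb^2}+M$ with $M\sim H$ effective; thus Propositions \ref{D=Z+M} and \ref{Y dP M} apply to the blowup $f:Z\to\Pb^2$ at $p\in D$. By Remark \ref{effective cone} it then suffices to check the strict inequality $\operatorname{mult}_{\Sigma}f_*C<2-(e-1)M\cdot f_*C+(f_*C)^2$ on the two generators $E$ and $\widetilde H=f^*H-E$ of the effective cone of the one-point blowup $Z$. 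For $\widetilde H$ this reads $1<2-1+1=2$ and for $E$ it reads $0<2$, both valid, so $\Yc$ is del Pezzo.

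Finally, for the ``moreover'' clause I would first pin down the ramification of $\Yc$. Computing $\Delta_{\Yc}=K_{\Yc}-K_Z=2f^*H-\tfrac12 E$ (using $K_{\Yc}=\tfrac12(-2f^*H+E)$ from Proposition \ref{D=Z+M}) shows the ramification divisor of $\Yc$ is the proper transform $\widetilde D\sim 4f^*H-E$, with $E$ \emph{unramified}. Now let $g:\Yc'\to\Yc$ be the blowup at a point $q$. If $q\notin\widetilde D$ then $a=1$ and $K_{\Yc'}^2=\tfrac34-1<0$, so $\Yc'$ is not del Pezzo. If $q\in\widetilde D$ then $a=\tfrac12$ and $K_{\Yc'}^2=\tfrac12>0$, so the self-intersection no longer obstructs and I must produce a curve: I would take the line $\ell\subset\Pb^2$ through $p$ and the image of $q$ (through $p$ in the direction $q$ when $q$ is infinitely near $p$). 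Its proper transform has class $f^*H-E_p-E_q$ and multiplicity $m=2$ at $\Sigma$, violating the del Pezzo bound $m<2-1+1=2$ of Proposition \ref{Y dP M} (equivalently $K_{\Yc'}\cdot\widetilde\ell=0$), so $\Yc'$ is not del Pezzo. Since del Pezzo descends under contraction of a $(-1)$-curve by Lemma \ref{contraction of del pezzo orders}, no iterated blowup of $\Yc$ can be del Pezzo either.

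I expect the main obstacle to be exactly this last part: because $K_{\Yc'}^2$ stays positive when $q\in\widetilde D$, the numerical criterion gives no contradiction and one is forced to locate the obstructing line and, in particular, to handle the infinitely-near configuration $q\in E_p$ uniformly with the separated case. A secondary subtlety worth confirming carefully is that $E$ genuinely fails to be a ramification component after the first blowup, since this is what controls whether $a=1$ or $a=\tfrac12$ at the second stage.
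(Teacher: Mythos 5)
Your proposal is correct and follows essentially the same route as the paper: the numerical elimination of cases via $K_{\Yc}^2=K_{\Xc}^2-a^2$, the verification of ampleness on the effective-cone generators $E$ and $\widetilde H$ using Proposition \ref{Y dP M} and Remark \ref{effective cone}, and the obstructing line through the two blown-up points for the final clause. If anything, your write-up is slightly more complete than the paper's, which only works out the case of two points both in $D$ for the ``moreover'' part and leaves implicit both the reduction of iterated blowups via Lemma \ref{contraction of del pezzo orders} and the infinitely-near configuration.
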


\begin{proof}
Let $\Xc$ be a TdPO over $\Pb^2$ with the ramification divisor $D=\cup D_i$  of degree $4$. Let  $f:Z\rightarrow \Pb^2$ be a blowup at a point $p$ with the exceptional curve $E$ and let $\Yc$ be the associated maximal order over $Z$. Using Equation \ref{self intersect P^2} and Proposition \ref{decrease of K_X^2} we get
\begin{align*}
(K_{Z}+\Delta_Z)^2&=1-\dfrac{8}{e}+\dfrac{16}{e^2}-a^2,
\end{align*}
where $e=2$ or $e=3$. We can easily get that $a\neq 1$, meaning that $p\in D$ by Lemma \ref{coefficient of E} and also $e\neq3$. We see that $D\sim -K_{\Pb^2}+H$ for a line $H$. Further $p\in D$, and by Proposition \ref{Y dP M} the  sufficient and necessary condition for $\Yc$ to be del Pezzo is to have 
$mult_{p}f_*C<2-(2-1)H.f_*C+f_*C^2$,  for  every effective curve $C\in Z$. Moreover, by Remark \ref{effective cone} we need to check the inequality only for $C=\widetilde{H}$ and $C=E$ which generate the effective cone of $Z$.
\begin{align*}
0=mult_{p}H&<2-H^2+H^2=2
\end{align*}
and
\begin{align*}
0=mult_{p}p&<2-H.p+p^2=1
\end{align*}

Now let $f:Z\rightarrow \Pb^2$ be blowups at the points $p$ and $q$ in $D$ and let $\ell$ be a  line passing through $p$ and $q$. Also let $\tilde{\ell}$ be the proper transform of $\ell$. Then
\begin{align*}
(K_{Z}+\Delta_Z)\tilde{\ell}&=\left(f^*(K_{\Pb^2}+\Delta_{\Pb^2})+\dfrac{1}{2}E_p+\dfrac{1}{2}E_q\right)\tilde{\ell}\\
&=(K_{\Pb^2}+\Delta_{\Pb^2})\ell+\dfrac{1}{2}(E_p+E_q)\ell\\
&\leq -1+1,
\end{align*}
i.e. the obtained  order over $Z$ is not del Pezzo. 
\end{proof}

\begin{theorem}\label{blowups degree 5}
 Let $\Xc$ be a TdPO over $\Pb^2$ with  ramification divisors $D=\cup D_i$  of degree $5$. Then any blowup of $\Xc$ will not be del Pezzo.
\end{theorem}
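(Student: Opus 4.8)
The plan is to show that the single numerical invariant $K^2$ already obstructs the del Pezzo property, so that no curve-by-curve analysis is needed. First I would recall from Table \ref{table} that when $\deg D=5$ the ramification degree is forced to be $e=2$, and from Equation \ref{self intersect P^2} that consequently $(K_{\Pb^2}+\Delta_{\Pb^2})^2=\tfrac14$. This value is the crucial feature of the degree-$5$ case: it sits exactly at the threshold that even a single blowup will breach.

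Next I would invoke Proposition \ref{decrease of K_X^2}. If $f:Z\rightarrow\Pb^2$ is a blowup at a point $p$ and $\Yc$ is the resulting maximal order, then $(K_Z+\Delta_Z)^2=\tfrac14-a^2$, where $a$ is the coefficient supplied by Lemma \ref{coefficient of E}. The key step is to bound $a$ from below. Since every ramification curve has degree $e=2$, Lemma \ref{coefficient of E} leaves only three possibilities: $a=1$ if $p\notin D$; $a=\tfrac12$ if $p$ is a smooth point of $D$; and, at a singular point of $D$, since the two crossing curves both have degree $2$ we again obtain $a=\tfrac12$. In every case $a\geq\tfrac12$.

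Combining these two facts gives $(K_Z+\Delta_Z)^2=\tfrac14-a^2\leq\tfrac14-\tfrac14=0$. Since a del Pezzo order must satisfy $(K_Z+\Delta)^2>0$ (the positivity that is part of $-K_{\Yc}$ being ample, exactly as used in the proof of Lemma \ref{contraction of del pezzo orders}), the order $\Yc$ cannot be del Pezzo. For an iterated sequence of blowups I would then note that blowups of terminal orders remain terminal, so Proposition \ref{decrease of K_X^2} applies at each successive stage and $K^2$ only decreases further; hence the inequality $K^2\leq 0$ persists and no blowup whatsoever produces a del Pezzo order.

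I do not anticipate a genuine obstacle here; the degree-$5$ case is rigid precisely because $e=2$ is the only admissible ramification degree. The one point requiring care is the singular-point case, where one must confirm that equal crossing degrees $e=ne=2$ force $n=1$ and therefore $a=\tfrac{1}{ne}=\tfrac12$, rather than some smaller value. Once that is checked, the uniform bound $a\geq\tfrac12$ closes the argument immediately.
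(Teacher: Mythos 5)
Your argument is correct and is essentially the paper's own proof: both compute $(K_{\Pb^2}+\Delta_{\Pb^2})^2=\tfrac14$ from Equation \ref{self intersect P^2}, apply Proposition \ref{decrease of K_X^2} to get $\tfrac14-a^2\leq 0$ since $a\geq\tfrac12$ by Lemma \ref{coefficient of E}, and conclude the blown-up order cannot be del Pezzo. Your additional care with the singular-point case of Lemma \ref{coefficient of E} and the remark that further blowups only decrease $K^2$ are welcome details the paper leaves implicit, but they do not change the route.
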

\begin{proof}
Let $\Xc$ be a TdPO over $\Pb^2$ with the ramification divisor $D=\cup D_i$  of degree $5$. And let  $f:Z\rightarrow \Pb^2$ be a blowup at a point $p$ and let $\Yc$ be the order over $Z$. Using Equation \ref{self intersect P^2} and Proposition \ref{decrease of K_X^2} we get
\begin{align*}
(K_{Z}+\Delta_Z)^2&=\dfrac{1}{4}-a^2,
\end{align*}
where $a=1$ or $a=\frac{1}{2}$, both of which make the self-intersection $(K_{Z}+\Delta_Z)^2$ less than or equal to zero.
\end{proof}

Figures \ref{Quartic ramification divisors}  is a complete list of degree $4$ terminal ramification divisors over $\Pb^2$. The four ones on the top have $e=2$ and the rest have $e=3$.

\begin{figure}[]
  \centering
\begin{multicols}{4}
 \begin{tikzpicture}[scale=0.9]
 \node (1) at (0,1) {};
 \node (1) at (0,-0.3) {\ti1};
\node (1) at (0,-0.9) {\ti1};
\draw [black] plot [smooth, tension=0.9] coordinates { (0,-0.6) (1,0) (1.4,-0.6) (1,-1.2) (0,-0.6) (-1,0) (-1.4,-0.6) (-1,-1.2) (0,-0.6)};
\end{tikzpicture}\\
Irreducible quartic with one node

\columnbreak
\begin{tikzpicture}[scale=0.9]
 \node (1) at (0,1.6) {};
 \node (1) at (0,0.3) {\ti1};
\node (1) at (0,-0.3) {\ti1};
\node (1) at (1.6,0.3) {\ti1};
\node (1) at (1.6,-0.3) {\ti1};
\draw [black] plot [smooth, tension=0.7, scale=0.8] coordinates { (0,0) (1,0.8) (2,0) (3,-0.8) (3.4,0) (3,0.8) (2,0) (1,-0.8) (0,0) (-1,0.8) (-1.4,0) (-1,-0.8) (0,0)};
\end{tikzpicture} \\
Irreducible quartic with two nodes

\columnbreak
\begin{tikzpicture}
\node (1) at (-0.1,0.6) {\ti1};
\node (1) at (-0.5,0.7) {\ti1};
\node (1) at (0.5,0.2) {\ti1};
\node (1) at (-0.2,-0.2) {\ti1};
\node (1) at (-0.5,-0.3) {\ti1};
\node (1) at (0.5,-0.2) {\ti1};
\draw [black] plot [smooth cycle, tension=2, scale=1.2] coordinates { (-0.3,0) (0.8,1.1) (0.1,-0.2) (-1.4,0) (0,0.2) (0.9,-1)};
 \end{tikzpicture} \\
Irreducible quartic with three nodes
 
\columnbreak
 \begin{tikzpicture}[scale=0.8]
 \node (1) at (0,1.8) {};
\node (1) at (-0.6,0.6) {\ti1};
\node (1) at (-1,1) {\ti1};
\node (1) at (0.6,0.6) {\ti1};
\node (1) at (1,1) {\ti1};
\node (1) at (-0.6,-0.6) {\ti1};
\node (1) at (-1,-1) {\ti1};
\node (1) at (0.6,-0.6) {\ti1};
\node (1) at (1,-1) {\ti1};
\draw [black] plot [smooth cycle, tension=1] coordinates { (-1,0) (0,1.5) (1,0) (0,-1.5)};
\draw [black] plot [smooth cycle, tension=1] coordinates { (-1.5,0) (0,1) (1.5,0) (0,-1)};
\end{tikzpicture}\\
Two conics crossing at $4$ points.
\end{multicols} 
\begin{multicols}{3}
 \begin{tikzpicture}[scale=0.9]
 \node (1) at (-1.8,1.6) {};
 \node (1) at (0,0.3) {\ti1};
\node (1) at (0,-0.3) {\ti2};
\draw [black] plot [smooth, tension=0.9] coordinates { (0,0) (1,0.6) (1.4,0) (1,-0.6) (0,0) (-1,0.6) (-1.4,0) (-1,-0.6) (0,0)};
\end{tikzpicture}\\
Irreducible quartic with one node

 \begin{tikzpicture}[scale=1.1]
 \node (1) at (0,1.6) {};
\node (1) at (-1.2,0) {\ti1};
\node (1) at (-1.5,0.4) {\ti2};
\node (1) at (-0.6,0.1) {\ti1};
\node (1) at (-0.5,0.6) {\ti2};
\node (1) at (0.7,0.3) {\ti1};
\node (1) at (0.6,0.7) {\ti2};
\node (1) at (0.3,0) {\ti1};
\node (1) at (0,-0.2) {\ti2};
\draw [black, scale=1.2] plot [smooth, tension=1] coordinates { (0.8,0.8) (0,0) (-1,-0.5) (-1,0.5) (0,0) (0.8,-0.8)};
\draw [black, scale=1.2] plot [smooth, tension=1] coordinates { (-1.6,0.1) (0.9,0.4)};
\end{tikzpicture}\\
Transverse\\nodal cubic \& line

\begin{tikzpicture}
\node (1) at (-0.4,1.7) {};
\node (1) at (-1.1,-0.5) {\ti2};
\node (1) at (-0.6,-0.7) {\ti1};
\node (1) at (1.1,0.6) {\ti1};
\node (1) at (0.8,0.9) {\ti2};
\node (1) at (0.4,0.6) {\ti2};
\node (1) at (0.6,0.15) {\ti1};
%\draw [black, scale=0.8] plot [smooth cycle, tension=1] coordinates { (-1.5,0) (-0.5,1) (0.5,0) (-0.5,-1)};
%\draw [black, scale=1] plot [smooth, tension=1] coordinates { (1.5,1.5) (1,0) (1.5,-1.5)};
\draw [black, scale=1] plot [smooth, tension=1] coordinates { (-1.5,-1) (1.6,1.3)};
\draw [black, scale=1] plot [smooth, scale=0.8, tension=0.8] coordinates { (1.5,1.5) (0.7,0.4)(-0.5,0.8) (-1.2,0) (-0.5,-0.8) (0.7,-0.4)(1.5,-1.5)};
\end{tikzpicture}\\
Transverse\\ smooth cubic \& line
 
\columnbreak
  \begin{tikzpicture}[scale=0.9]
 \node (1) at (0,1.6) {};
 \node (1) at (0,0.3) {\ti2};
\node (1) at (0,-0.3) {\ti1};
\node (1) at (1.6,0.3) {\ti2};
\node (1) at (1.6,-0.3) {\ti1};
\draw [black] plot [smooth, tension=0.7, scale=0.8] coordinates { (0,0) (1,0.8) (2,0) (3,-0.8) (3.4,0) (3,0.8) (2,0) (1,-0.8) (0,0) (-1,0.8) (-1.4,0) (-1,-0.8) (0,0)};
\end{tikzpicture}\\
Irreducible quartic with two node
 
\begin{tikzpicture}[scale=0.8]
\node (1) at (-1.7,1.8) {};
\node (1) at (-0.6,0.7) {\ti1};
\node (1) at (-0.9,1) {\ti2};
\node (1) at (0.6,0.7) {\ti1};
\node (1) at (0.9,1) {\ti2};
\node (1) at (-0.6,-0.7) {\ti2};
\node (1) at (-0.9,-1.1) {\ti1};
\node (1) at (0.6,-0.7) {\ti2};
\node (1) at (0.9,-1.1) {\ti1};
\draw [black] plot [smooth cycle, tension=1] coordinates { (-1,0) (0,1.5) (1,0) (0,-1.5)};
\draw [black] plot [smooth cycle, tension=1] coordinates { (-1.5,0) (0,1) (1.5,0) (0,-1)};
\end{tikzpicture}\\
Transverse  conics

 \begin{tikzpicture}[scale=1.1]
 \node (1) at (0,1.6) {};
\node (1) at (-1.2,0) {\ti2};
\node (1) at (-1.5,0.4) {\ti1};
\node (1) at (-0.6,0.1) {\ti2};
\node (1) at (-0.5,0.6) {\ti1};
\node (1) at (0.7,0.3) {\ti2};
\node (1) at (0.6,0.7) {\ti1};
\node (1) at (0.3,0) {\ti1};
\node (1) at (0,-0.2) {\ti2};
\draw [black, scale=1.2] plot [smooth, tension=1] coordinates { (0.8,0.8) (0,0) (-1,-0.5) (-1,0.5) (0,0) (0.8,-0.8)};
\draw [black, scale=1.2] plot [smooth, tension=1] coordinates { (-1.6,0.1) (0.9,0.4)};
\end{tikzpicture}
Transverse\\ nodal cubic \& line
 
\columnbreak
 \begin{tikzpicture}
\node (1) at (-0.1,0.6) {\ti1};
\node (1) at (-0.5,0.7) {\ti2};
\node (1) at (0.5,0.2) {\ti1};
\node (1) at (-0.2,-0.2) {\ti2};
\node (1) at (-0.5,-0.3) {\ti1};
\node (1) at (0.5,-0.2) {\ti2};
\draw [black] plot [smooth cycle, tension=2, scale=1.2] coordinates { (-0.3,0) (0.8,1.1) (0.1,-0.2) (-1.4,0) (0,0.2) (0.9,-1)};
 \end{tikzpicture} \\
 Irreducible quartic with three node

\begin{tikzpicture}
\node (1) at (-0.4,1.7) {};
\node (1) at (-1.1,-0.5) {\ti1};
\node (1) at (-0.6,-0.7) {\ti2};
\node (1) at (1.1,0.6) {\ti2};
\node (1) at (0.8,0.9) {\ti1};
\node (1) at (0.4,0.6) {\ti1};
\node (1) at (0.6,0.15) {\ti2};
%\draw [black, scale=0.8] plot [smooth cycle, tension=1] coordinates { (-1.5,0) (-0.5,1) (0.5,0) (-0.5,-1)};
%\draw [black, scale=1] plot [smooth, tension=1] coordinates { (1.5,1.5) (1,0) (1.5,-1.5)};
\draw [black, scale=1] plot [smooth, tension=1] coordinates { (-1.5,-1) (1.6,1.3)};
\draw [black, scale=1] plot [smooth, scale=0.8, tension=0.8] coordinates { (1.5,1.5) (0.7,0.4)(-0.5,0.8) (-1.2,0) (-0.5,-0.8) (0.7,-0.4)(1.5,-1.5)};
\end{tikzpicture}\\
Transverse\\ smooth cubic \& line

\begin{tikzpicture}
\node (1) at (-0.4,0.5) {\ti2};
\node (1) at (-0.4,1.2) {};
\node (1) at (-0.8,0.75) {\ti1};
\node (1) at (-0.5,-0.4) {\ti1};
\node (1) at (-0.8,-0.8) {\ti2};
\node (1) at (1,0.4) {\ti2};
\node (1) at (0.6,0.7) {\ti1};
\node (1) at (0.9,-0.2) {\ti2};
\node (1) at (0.8,-0.7) {\ti1};
\node (1) at (0.05,0.15) {\ti1};
\node (1) at (-0.1,-0.25) {\ti2};
\draw [black, scale=0.8] plot [smooth cycle, tension=1] coordinates { (-1,0) (0,1) (1,0) (0,-1)};
\draw [black, scale=1] plot [smooth, tension=1] coordinates { (-1,-1.2) (1.2,0.8)};
\draw [black, scale=1] plot [smooth, tension=1] coordinates { (-1.4,1) (1.2,-0.8)};
\end{tikzpicture} 
Transverse\\  conic \&  lines

\end{multicols}
\caption{Quartic ramification divisors}
\label{Quartic ramification divisors}
\end{figure}
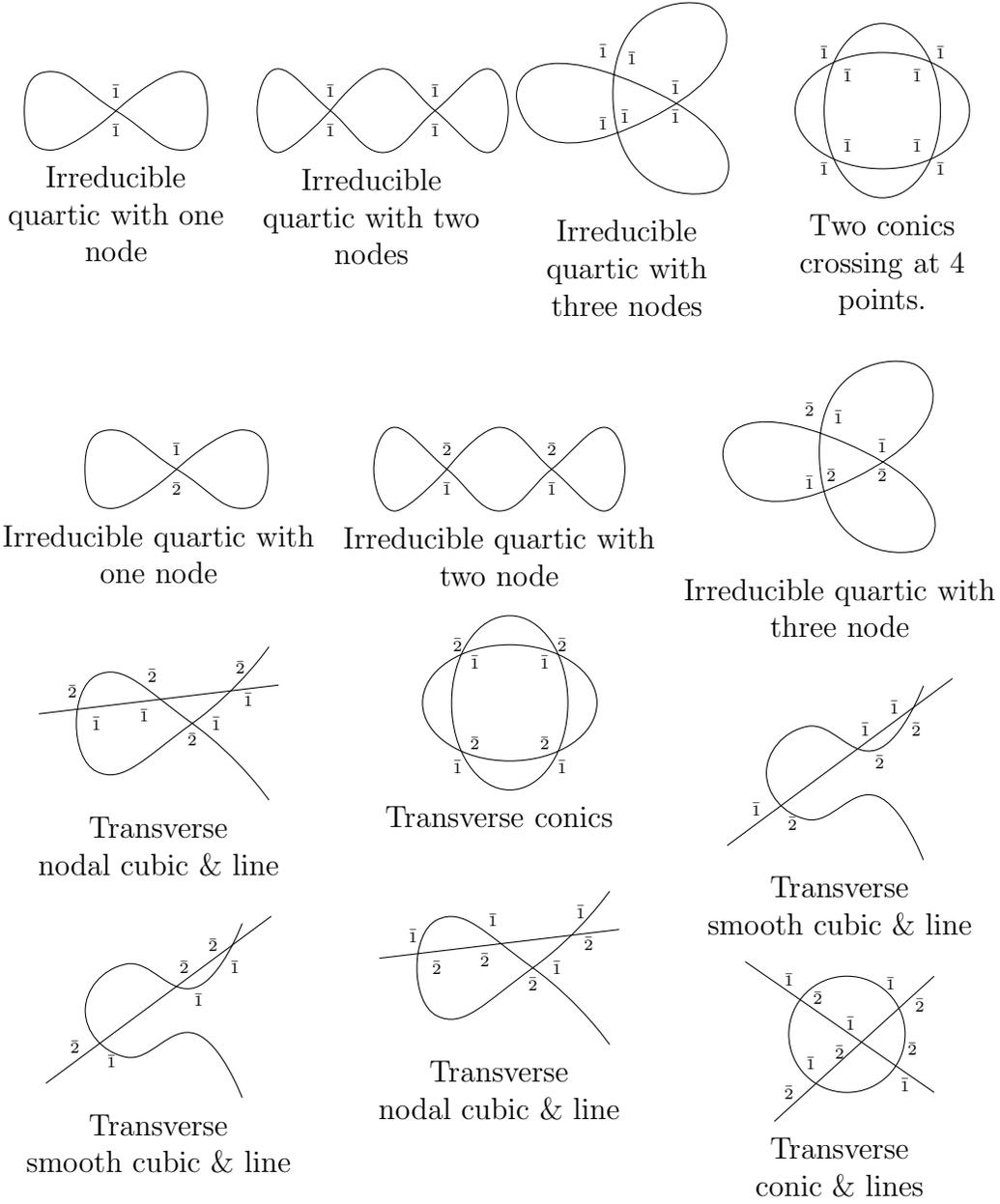

\subsection{TdPOs over $\PP$}
\begin{proposition}\cite[Proposition 30]{chan2003pezzo}\label{centre of del pezzo on PP}
If $D$ is the ramification divisors  of a TdPO over $\PP$, then $2C_0+2F\leq[D]\leq 3C_0+3F$ where $C_0$ and $F$ are two crossing fibres . Moreover the ramification degrees of $D$ are equal, say $e$, and  $e=2$ if $2C_0+2F< [D]\leq 3C_0+3F$. Conversely any terminal order with such ramification data is del Pezzo.
\end{proposition}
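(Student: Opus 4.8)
The plan is to reduce the entire statement to the numerical ampleness criterion on $Z=\PP$ and then feed in two facts about the ramification of the order. The Mori cone of $\PP$ is spanned by the two fibre classes $C_0$ and $F$, with $C_0^2=F^2=0$ and $C_0\cdot F=1$ (the $\deg Z=8$ case of Remark~\ref{effective cone}), so a class $\alpha C_0+\beta F$ is ample exactly when $\alpha>0$ and $\beta>0$. Assuming for the moment that the ramification degrees share a common value $e$ and writing $[D]=aC_0+bF$, we have $\Delta\equiv(1-\tfrac1e)(aC_0+bF)$ and, since $K_Z=-2C_0-2F$, the class $-K_{\Xc}=-(K_Z+\Delta)$ equals $\bigl(2-(1-\tfrac1e)a\bigr)C_0+\bigl(2-(1-\tfrac1e)b\bigr)F$. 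Hence $\Xc$ is del Pezzo if and only if the two inequalities $(1-\tfrac1e)a<2$ and $(1-\tfrac1e)b<2$ hold, and the whole proposition becomes a matter of combining these with the two ramification inputs below.

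First I would establish the lower bound $2C_0+2F\le[D]$, equivalently $a=[D]\cdot F\ge2$ and $b=[D]\cdot C_0\ge2$. Here I would use that the secondary ramification of $\Xc$ along each component $D_i$ is a cyclic $\Zb/e_i$-cover of $D_i$, \'etale away from the nodes of $D$ — the standard description of the ramification of an order (see \cite{chan2003pezzo} and \cite{CI2}). Suppose $a\le1$. Any component of class $(0,\,\cdot)$ is a fibre in $|F|$, and such fibres are pairwise disjoint, so each meets the rest of $D$ in at most the one point where it could cross a component of class $(1,\,\cdot)$; since a rational curve carrying a nontrivial cyclic cover needs at least two branch points, no fibre component can occur. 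As $D\neq0$, this forces $D$ to be a single irreducible curve of class $(1,b)$, which by $p_a(aC_0+bF)=(a-1)(b-1)$ is smooth rational with no nodes and so again supports no nontrivial cover — a contradiction. Hence $a\ge2$, and symmetrically $b\ge2$. The borderline class $2C_0+2F=-K_Z$ is elliptic and admits \'etale cyclic covers of every degree, which is exactly the unconstrained-$e$ situation $D\sim-K_Z$ governed by Proposition~\ref{Z iff X}.

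The equality of all ramification degrees is the delicate point, and this is where I expect the main obstacle to lie: it is an arithmetic statement about the global consistency of the secondary ramification rather than an intersection-theoretic one. Locally Lemma~\ref{coefficient of E} permits a node with crossing degrees $e$ and $ne$, so equality cannot be forced node by node; I would instead combine the monodromy-compatibility at the nodes with reciprocity for the $\Zb/e_i$-cover along each (now rational) component, checking on the short list of configurations with $a,b\le3$ that the monodromies are over-determined unless every $n=1$. This is precisely the portion cleanest to import from the structure theory of del Pezzo orders in \cite{chan2003pezzo}, which is in any case the stated source of the proposition. Granting equal degrees $e$, the two inequalities finish the bounds: if $e\ge3$ then $1-\tfrac1e\ge\tfrac23$, so $(1-\tfrac1e)a<2$ gives $a\le2$ and likewise $b\le2$, whence $[D]=2C_0+2F$; contrapositively, $2C_0+2F<[D]$ forces $e=2$, and then $\tfrac12a<2$ and $\tfrac12b<2$ give $a,b\le3$, i.e. $[D]\le3C_0+3F$.

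Finally, for the converse I would run the computation forward. Given terminal ramification data on $\PP$ with $2\le a,b\le3$ and with $e=2$ unless $[D]=2C_0+2F$, the coefficients $2-(1-\tfrac1e)a$ and $2-(1-\tfrac1e)b$ equal $2/e>0$ when $(a,b)=(2,2)$ and lie in $\{\tfrac12,1\}$ in the remaining cases where $e=2$; in every case both are strictly positive, so $-K_{\Xc}$ is ample and $\Xc$ is del Pezzo. In summary, only the equality of degrees requires the ramification-theoretic input; the bounds, the distinguished role of $e=2$, and the converse are all direct consequences of the single ampleness computation on $\PP$.
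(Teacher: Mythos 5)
The paper does not prove this proposition at all: it is quoted verbatim from Chan and Kulkarni as \cite[Proposition 30]{chan2003pezzo}, so there is no internal proof to measure you against, and what you have written is essentially a reconstruction of the source's argument. The parts you carry out in full are correct. On $\PP$ a class $\alpha C_0+\beta F$ is ample iff $\alpha,\beta>0$, so (granting equal degrees) del Pezzo is exactly $(1-\tfrac1e)a<2$ and $(1-\tfrac1e)b<2$; from this, $e\geq3$ forces $[D]=2C_0+2F$, contrapositively $2C_0+2F<[D]$ forces $e=2$ and then $a,b\leq3$, and the converse is the same computation run forward. The cyclic-cover argument for $a,b\geq2$ is also the right mechanism, though it silently uses that $D\neq0$ (for a nontrivial maximal order on a rational surface this needs Artin--Mumford) and that the cover of each component is connected and \'etale away from the nodes of $D$ --- facts you are importing from the structure theory of terminal orders rather than proving.

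The one genuine omission is the equality of the ramification degrees, which you explicitly defer to \cite{chan2003pezzo}. Since that is the only part of the statement that is not a formal consequence of the ampleness computation, the proposal does not stand alone as written. Note, however, that the paper already restates the needed input as Proposition \ref{genus for p-max} (Chan--Kulkarni's Lemma 23): for each prime $p$ dividing some $e_i$, the divisor $D_p$ satisfies $p_a(D_p)\geq1$, and since $p_a(aC_0+bF)=(a-1)(b-1)$ on $\PP$ this forces $D_p\geq 2C_0+2F$. Combining that with the ampleness inequalities $\sum_i(1-\tfrac1{e_i})a_i<2$ and $\sum_i(1-\tfrac1{e_i})b_i<2$ (which already give $a,b\leq3$ without assuming equal degrees) pins down the possible configurations and eliminates unequal $e_i$ by a short case check. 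Closing the gap that way would make your argument self-contained modulo only the standard ramification formalism, and would use only tools the paper itself states.
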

Let $\Xc$ be a TdPO over $\PP$ and let $D$ be its ramification divisors with  ramification degree $e$, then the canonical divisor is as the following
\begin{align*}
K_{\Xc}=K_{\PP}+\Delta_{\PP}&\equiv \left(-2C_0-2F\right)+\left(1-\frac{1}{e}\right)\left(aC_0+bF\right)\\
&=\left(a-2-\dfrac{a}{e}\right)C_0+\left(b-2-\dfrac{b}{e}\right)F.
\end{align*}
for suitable $a$ and $b$. Further, we have the following equalities  for the self-intersection of the canonical divisor
\[
(K_{\PP}+\Delta_{\PP})^2=2\left(a-2-\dfrac{a}{e}\right)\left(b-2-\dfrac{b}{e}\right).
\]
Thus
\begin{equation}\label{self intersect P^1}
(K_{\PP}+\Delta_{\PP})^2=\left\{ \begin{array}{l}
         \dfrac{8}{e^2}~~~~~~~~~~~~~~~\text{if}~~~D\sim 2C_0+2F\\
        1~~~\text{if}~~~D\sim 3C_0+2F ~\text{or}~D\sim 2C_0+3F\\
        \dfrac{1}{2}~~~~~~~~~~~~~~~~\text{if}~~~D\sim 3C_0+3F\end{array} \right.
\end{equation}
%If $D\sim 2C_0+2F$, 
%\begin{equation}\label{self intersect P^1 22}(K_{\PP}+\Delta_{\PP})^2=\frac{8}{e^2}\end{equation}
%if $D\sim 3C_0+2F ~\text{or}~D\sim 2C_0+3F$,  considering $e=2$, we get
%\begin{equation}\label{self intersect P^1 22}(K_{\PP}+\Delta_{\PP})^2=1\end{equation}
%if $D\sim 3C_0+3F$, considering $e=2$, considering $e=2$, we get
%\begin{equation}\label{self intersect P^1 22}(K_{\PP}+\Delta_{\PP})^2=\frac{1}{2}\end{equation}

\begin{proposition}\label{blowup p notin PP}
 Let $\Xc$ be a TdPO over $\PP$ and let $D$ be its ramification divisors. Then if $f:Z\rightarrow\PP$ is a blowup at a point $p\notin D$, then the order $\Yc$ over $Z$ is not del Pezzo.
\end{proposition}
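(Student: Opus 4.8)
The plan is to separate cases according to the class $[D]$, which by Proposition \ref{centre of del pezzo on PP} satisfies $2C_0+2F\leq[D]\leq 3C_0+3F$, and to use that a point $p\notin D$ is blown up with coefficient $a=1$.

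First I would set up the common computation. Since $p\notin D=\ceil{\Delta}$, Lemma \ref{coefficient of E} gives $a=1$, so
\[
K_Z+\Delta_Z\equiv f^*(K_{\PP}+\Delta_{\PP})+E,
\]
and Proposition \ref{decrease of K_X^2} yields $K_{\Yc}^2=K_{\Xc}^2-1$. Substituting the values of Equation \ref{self intersect P^1} then disposes of every class other than $2C_0+2F$: if $[D]\sim 3C_0+2F$ or $2C_0+3F$ we get $K_{\Yc}^2=1-1=0$, and if $[D]\sim 3C_0+3F$ we get $K_{\Yc}^2=\tfrac{1}{2}-1<0$. In neither situation is $(K_Z+\Delta_Z)^2>0$, so $\Yc$ is not del Pezzo.

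The only delicate class is $D\sim 2C_0+2F=-K_{\PP}$, where $K_{\Yc}^2=\tfrac{8}{e^2}-1$ is still positive for $e=2$, so the numerical invariant alone does not conclude. Here I would instead exhibit an effective curve on which the anticanonical divisor is non-negative, in the spirit of Lemma \ref{blowup out of D, e=2}. Since the ramification is anti-canonical, $K_{\PP}+\Delta_{\PP}=\tfrac{1}{e}K_{\PP}$, hence after the blowup $K_Z+\Delta_Z\equiv \tfrac{1}{e}f^*K_{\PP}+E$. Letting $F_p$ be the fibre (in either ruling) through $p$, with proper transform $\tilde{F}_p=f^*F-E$, the relations $K_{\PP}.F=-2$ and $E^2=-1$ give
\[
(K_Z+\Delta_Z).\tilde{F}_p=\tfrac{1}{e}K_{\PP}.F-E^2=1-\tfrac{2}{e}\geq 0\qquad(e\geq2).
\]
As this intersection number is $\geq0$ for every admissible $e$, the anticanonical divisor is not ample and $\Yc$ is not del Pezzo.

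I expect the whole content to sit in this last case: the drop in $K^2$ by $a^2=1$ is not enough when $D$ is anti-canonical with $e=2$, and one must find the fibre $\tilde{F}_p$ through the blown-up point along which $-K_{\Yc}$ degenerates. The remaining classes are immediate substitutions into Equation \ref{self intersect P^1}, so no further obstacle is anticipated.
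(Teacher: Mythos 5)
Your proposal is correct and follows essentially the same route as the paper: first kill all classes except $D\equiv 2C_0+2F$ by the drop $K_{\Yc}^2=K_{\Xc}^2-1$, then exhibit the proper transform of the fibre through $p$ as a curve on which $K_{\Yc}$ is non-negative. The only cosmetic difference is that you treat all $e\geq 2$ uniformly via $1-\tfrac{2}{e}\geq 0$, whereas the paper first reduces to $e=2$ by the self-intersection count and then computes the same intersection to be $0$.
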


\begin{proof}
Let $f:Z\rightarrow\PP$ represents a blowup at a point $p\notin D$. Then if $E$ is the corresponding exceptional curve, we have the following equations
\begin{align*}
K_{Z}+\Delta_Z&\equiv f^*(K_{\PP}+\Delta_{\PP})+E,\\
&=f^*\left(\left(a-2-\dfrac{a}{e}\right)C_0+\left(b-2-\dfrac{b}{e}\right)F\right)+E.
\end{align*}

\begin{align*}
(K_{Z}+\Delta_Z)^2&=2\left(a-2-\dfrac{a}{e}\right)\left(b-2-\dfrac{b}{e}\right)-1
\end{align*}
It can be easily checked that the self-intersection in all cases of $D$ except $D\equiv 2C_0+2F$ is negative and  for $D\equiv 2C_0+2F$ it is positive only if the ramification degrees are $2$. Now let $D\equiv 2C_0+2F$ and let $\tilde{\ell}$ be the proper transform of a line $\ell\equiv F$ going through $p$. Then
\begin{align*}
(K_{Z}+\Delta_Z)\tilde{\ell}&= (f^*(K_{\PP}+\Delta_{\PP}))\tilde{\ell}+E.\tilde{\ell}\\
&=(-C_0-F)\ell+1\\
&=0
\end{align*}
\end{proof}

\begin{theorem}\label{blowup p in PP}
Let $\Xc$ be a TdPO over $\PP$ and let $D$ be its ramification divisors such that $2C_0+2F<[D]$. Then any blowup of the order at a point in $D$ fails to be del Pezzo.
\end{theorem}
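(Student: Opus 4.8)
The plan is to reduce to the two cases allowed by the hypothesis and, in each, to exhibit an effective curve on which the anti-canonical class of the blown-up order is numerically trivial; this shows $-K_{\Yc}$ is not ample and hence $\Yc$ is not del Pezzo. By Proposition \ref{centre of del pezzo on PP}, the hypothesis $2C_0+2F<[D]$ forces every ramification degree to equal $e=2$ and forces $[D]$ to be either $3C_0+2F$ (equivalently $2C_0+3F$) or $3C_0+3F$. Writing the $C_0$- and $F$-coefficients of $D$ as $a$ and $b$ and setting $e=2$, the formula for $K_{\Xc}$ preceding Equation \ref{self intersect P^1} gives $K_{\Xc}\equiv\left(\tfrac{a}{2}-2\right)C_0+\left(\tfrac{b}{2}-2\right)F$, so $K_{\Xc}\equiv-\tfrac12 C_0-F$ in the first case and $K_{\Xc}\equiv-\tfrac12 C_0-\tfrac12 F$ in the second.

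First I would record that the self-intersection test is inconclusive here. Since $p\in D$ and all ramification degrees equal $2$, Lemma \ref{coefficient of E} gives the coefficient of the exceptional curve $E$ as $\tfrac12$, whether $p$ is a smooth point of $D$ or a node (in the nodal case the two crossing branches both have degree $2$, so $n=1$ and $\tfrac{1}{ne}=\tfrac12$). Hence Proposition \ref{decrease of K_X^2} yields $K_{\Yc}^2=K_{\Xc}^2-\tfrac14$, which equals $\tfrac34$ or $\tfrac14$ according to the case, both strictly positive. The obstruction to being del Pezzo must therefore be detected by an effective curve, not by the intersection number.

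The key step is to test $K_Z+\Delta_Z$ against the proper transform of a ruling through $p$. Let $\ell$ be the fibre in the class $F$ passing through $p$; since $\ell$ is smooth at $p$ we have $\tilde\ell=f^*\ell-E$. Using $f^*\alpha\cdot E=0$ and $E^2=-1$, I would compute
\[
(K_Z+\Delta_Z)\cdot\tilde\ell=\left(f^*K_{\Xc}+\tfrac12 E\right)\cdot\left(f^*\ell-E\right)=K_{\Xc}\cdot F+\tfrac12=\left(\tfrac{a}{2}-2\right)+\tfrac12 .
\]
As $a=3$ in both admissible cases, this equals $0$, so $\tilde\ell$ is an effective curve with $(K_Z+\Delta_Z)\cdot\tilde\ell\geq0$, and $\Yc$ fails the del Pezzo condition. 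For $D\sim3C_0+3F$ the symmetric computation shows either ruling works, whereas for $D\sim3C_0+2F$ one must choose $\ell\in|F|$: the other ruling gives $K_{\Xc}\cdot C_0=\tfrac{b}{2}-2=-1$ and hence $(K_Z+\Delta_Z)\cdot\tilde\ell=-\tfrac12<0$, which is consistent with del Pezzo and so of no use.

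The only genuine obstacle is conceptual: one must recognise that $K_{\Yc}^2>0$ does not imply the order is del Pezzo, and then locate the correct anti-canonically trivial curve. Once the ruling in the class $F$ is singled out, the verification is a direct intersection calculation that parallels Proposition \ref{blowup p notin PP}.
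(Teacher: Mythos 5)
Your proof is correct, and at bottom it rests on the same numerical criterion as the paper's argument, but you apply it to the curve that actually carries the conclusion. The paper writes $D\sim -K_{\PP}+M$ and invokes Proposition \ref{Y dP M} together with Remark \ref{effective cone}, but its displayed check is performed only on the exceptional curve $E$, where the inequality $mult_{p}f_*C<2-(e-1)M.f_*C+f_*C^2$ is satisfied; the generator of the effective cone that violates the del Pezzo condition is the proper transform $\tilde\ell$ of the ruling in $|F|$ through $p$ (a $(-1)$-curve on the blowup), for which $mult_p\ell=1$ while $2-M.\ell+\ell^2=2-1+0=1$, so the strict inequality fails. Your direct computation $\left(f^*K_{\Xc}+\tfrac12 E\right)\cdot\left(f^*\ell-E\right)=K_{\Xc}\cdot F+\tfrac12=0$ is exactly this failure made explicit, and your preliminary points are both correct and worth keeping: that $e=2$ forces the coefficient of $E$ to be $\tfrac12$ even when $p$ is a node of $D$ (so Lemma \ref{coefficient of E} gives $\tfrac{1}{ne}=\tfrac12$), and that $K_{\Yc}^2$ equals $\tfrac34$ or $\tfrac14$, so the self-intersection test alone cannot decide the question. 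What your route gives up is the economy of citing Remark \ref{effective cone} to limit the curves to be tested; what it gains is an explicit identification of the obstructing curve, which the paper's check on $E$ alone does not supply, and a verification that the other ruling ($K_{\Xc}\cdot C_0+\tfrac12=-\tfrac12$ when $D\sim 3C_0+2F$) is genuinely of no use.
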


\begin{proof}
Let $f:Z\rightarrow\PP$ represent a blowup at a point $p\in D$ with the exceptional curve $E$ where $[D\sim2C_0+2F+M]\leq 3C_0+3F$ for some effective divisor $M$. Then we know the ramification degrees $e=2$ and by Proposition  \ref{Y dP M} the inequality $mult_{p}f_*C<2-(2-1)H.f_*C+f_*C^2$ should satisfy for any effective curve $C\in Z$. But by Remark \ref{effective cone} we only need to check for the only $(-1)$-curve, $E$.
\begin{align*}
0=mult_{p}p<1-M.p=1
\end{align*}
\end{proof}

\begin{definition}
Let $\Sigma$ be a set of distinct points in $\PP$. $\Sigma$ is in general position if any curve of the form $aC_0+bF$ contains less than $2(a+b)$  points of $\Sigma$.
\end{definition}
\begin{theorem}\label{blowup p in D PP}
Let $\Xc$ be a TdPO over $\PP$ and let $D$ be its ramification divisors such that $[D]=2C_0+2F$. Also let $f:Z\rightarrow\PP$ be a sequence of blowups at the points $\Sigma=\{p_1, \cdots, p_n\}\subset D$. Then the associated order $\Yc$ over $Z$ is del Pezzo if and only if $\Sigma$ is in general position and $n\leq 7$.
\end{theorem}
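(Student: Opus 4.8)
The key observation is that the hypothesis $[D]=2C_0+2F$ makes the ramification divisor anti-canonical, since $K_{\PP}=-2C_0-2F$ and hence $D\sim -K_{\PP}$. Because $\Sigma\subset D$, Proposition \ref{Z iff X} applies: blowing up points of the ramification locus leaves the ramification anti-canonical, so the resulting order $\Yc$ over $Z$ satisfies $K_{\Yc}=\frac{1}{e}K_Z$, and $\Yc$ is del Pezzo if and only if $Z$ is del Pezzo. The whole statement is thereby reduced to the classical fact that the blowup $Z\to\PP$ at $\Sigma$ is a del Pezzo surface exactly when $\Sigma$ is in general position and $n\leq 7$. This reduction is the heart of the argument; everything else is verifying that the combinatorial general-position condition stated before the theorem is precisely the ampleness criterion for $-K_Z$.

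First I would dispose of the numerical bound. Blowing up $n$ distinct points drops the canonical self-intersection by one each time, so $K_Z^2=8-n$, and ampleness of $-K_Z$ forces $K_Z^2>0$, i.e. $n\leq 7$; equivalently $K_{\Yc}^2=\frac{1}{e^2}(8-n)$ by Proposition \ref{decrease of K_X^2}. Next I would translate general position into positivity. For an irreducible curve $C'\sim aC_0+bF$ on $\PP$ meeting $\Sigma$ in $m$ points, its strict transform $C$ on $Z$ satisfies $-K_Z\cdot C=(2C_0+2F)(aC_0+bF)-m=2(a+b)-m$, so $-K_Z\cdot C>0$ if and only if $m<2(a+b)$, which is exactly the condition in the definition preceding the theorem. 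The exceptional curves pose no problem, since $-K_Z\cdot E_i=1>0$ automatically.

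To finish the sufficiency direction I would invoke Nakai--Moishezon, or equivalently Remark \ref{effective cone}: it is enough to test $-K_Z\cdot C>0$ on the generators of the effective cone of $Z$, which for a surface of this degree are the $(-1)$- and $(-2)$-curves. Each such generator is either an exceptional curve, already handled, or the strict transform of a curve $aC_0+bF$, covered by the previous paragraph; together with $K_Z^2>0$ this gives ampleness. The necessity direction is the contrapositive: if general position fails, the offending curve $aC_0+bF$ yields a $C$ with $-K_Z\cdot C\leq 0$, and if $n\geq 8$ then $K_Z^2\leq 0$, so in either case $-K_Z$ is not ample and $Z$, hence $\Yc$, is not del Pezzo.

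I expect the only real obstacle to be the bookkeeping in the last step, namely confirming that the generators of the effective cone of $Z$ are exhausted by the exceptional divisors and the strict transforms of curves $aC_0+bF$, so that checking positivity against these is genuinely complete. This is exactly where one leans on the classification of normal del Pezzo surfaces in \cite{hidaka1981normal} and the description of their effective cones in \cite{derenthal2008nef}; the anti-canonical reduction supplied by Proposition \ref{Z iff X} is what makes this classical input directly usable here.
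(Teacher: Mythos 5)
Your proposal follows exactly the paper's route: the paper's proof is the one-line observation that $D\sim -K_{\PP}$ is anti-canonical, so Proposition \ref{Z iff X} reduces the claim to the classical characterization of del Pezzo blowups of $\PP$ cited from \cite{hidaka1981normal}. Your additional unpacking of that classical input (the bound $K_Z^2=8-n>0$ and the Nakai--Moishezon check against generators of the effective cone) is correct and consistent with Remark \ref{effective cone}, so this is the same argument carried out in more detail.
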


\begin{proof}
Here $D$ is an anti-canonical ramification, then use Proposition \ref{Z iff X} and classification of del Pezzo blowups of $\PP$ given in \cite{hidaka1981normal}.
\end{proof}

Figures  \ref{D=22} and \ref{figure e=2 on P1P1} are  the lists of possible minimal TdPO over $\PP$. In the descriptions, by $(a,b)$ it means a curve  in the divisor class $[aC_0+bF]$.

\begin{figure}[H]
  \centering
\begin{multicols}{3}
 \begin{tikzpicture}[scale=1]
 \node (1) at (0,1) {};
 \node (1) at (1,1) {\s$e$};
 \node (1) at (0,1) {\s$e$};
 \node (1) at (-0.8,0) {\s$e$};
 \node (1) at (-0.8,-1) {\s$e$};
 \node (1) at (1.2,0.3) {\bmu};
 \node (1) at (0.8,0.1) {-\bmu};
 \node (1) at (-0.2,-0.2) {-\bmu};
 \node (1) at (-0.3,0.15) {\bmu};
 
 \node (1) at (1.3,-1.1) {\bmu};
 \node (1) at (0.8,-0.8) {-\bmu};
 \node (1) at (-0.3,-1.2) {-\bmu};
 \node (1) at (-0.1,-0.8) {\bmu};
\draw [black] plot coordinates {(-0.6,0) (1.6,0)};
\draw [black] plot coordinates {(-0.6,-1) (1.6,-1)};
\draw [black] plot coordinates {(0,0.6) (0,-1.6)};
\draw [black] plot coordinates {(1,-1.6) (1,0.6)};
\end{tikzpicture}
$2\times(1,0)+2\times(0,1)$

\columnbreak
 
 \begin{tikzpicture}[scale=0.9]
 \node (1) at (0,1) {};
 \node (1) at (0,1) {\s$e$};
 \node (1) at (-0.6,1) {\s$e$};
 \node (1) at (-0.8,-1) {\s$e$};
 \node (1) at (-0.3,0) {-\bmu};
 \node (1) at (0.15,0.25) {\bmu};
 \node (1) at (0.2,-0.7) {-\bmu};
 \node (1) at (-0.2,-1.2) {\bmu};
 \node (1) at (1.25,-0.8) {-\bmu};
 \node (1) at (1,-1.25) {\bmu};
 \draw [black] plot coordinates {(-0.6,0.6) (1.6,-1.6)};
\draw [black] plot coordinates {(-0.6,-1) (1.6,-1)};
\draw [black] plot coordinates {(0,0.6) (0,-1.6)};
\end{tikzpicture}
$(1,0)+(0,1)+(1,1)$
\columnbreak
  
 \begin{tikzpicture}[scale=0.9]
 \node (1) at (0,1) {};
 \node (1) at (-0.3,1) {\s$e$};
 \node (1) at (-0.8,0) {\s$e$};
 \node (1) at (0.15,0.2) {-\bmu};
 \node (1) at (-0.25,-0.3) {\bmu};
 \node (1) at (1.45,0.3) {-\bmu};
 \node (1) at (0.9,-0.15) {\bmu};
\draw [black] plot coordinates {(-0.6,0) (1.6,0)};
 \draw [black] plot [smooth, tension=0.6] coordinates {(-0.3,0.6) (0,-0.6) (0.5,-1.2) (1,-0.6) (1.3,0.6)};
 \end{tikzpicture}
 $(1,0)+(1,2)$
\end{multicols}
 \begin{multicols}{5}
~ 
\columnbreak
 
\begin{tikzpicture}[scale=0.9]
 \node (1) at (0,1.6) {};
 \node (1) at (1,1) {\s$e$};
 \node (1) at (-0.1,0.3) {-\bmu};
 \node (1) at (-0.1,-0.3) {\bmu};
\draw [black] plot [smooth, tension=0.9] coordinates { (0,0) (1,0.6) (1.4,0) (1,-0.6) (0,0) (-1,0.6) (-1.4,0) (-1,-0.6) (0,0)};
\end{tikzpicture}
 $(2,2)$

\columnbreak
 ~
 \columnbreak

 \begin{tikzpicture}[scale=0.9]
 \node (1) at (0,1.6) {};
 \node (1) at (1,1) {\s$e$};
 \node (1) at (-0.1,0.3) {-\bmu};
 \node (1) at (-0.1,-0.3) {\bmu};
 \node (1) at (1.7,0.3) {-\bmu};
 \node (1) at (1.7,-0.3) {\bmu};
 \draw [black] plot [smooth, tension=0.7, scale=0.8] coordinates { (0,0) (1,0.8) (2,0) (3,-0.8) (3.4,0) (3,0.8) (2,0) (1,-0.8) (0,0) (-1,0.8) (-1.4,0) (-1,-0.8) (0,0)};
\end{tikzpicture}
 $(2,2)$

\columnbreak
 ~
 
\end{multicols}

\caption{$[D]=2C_0+2F$}\label{D=22}
\end{figure}
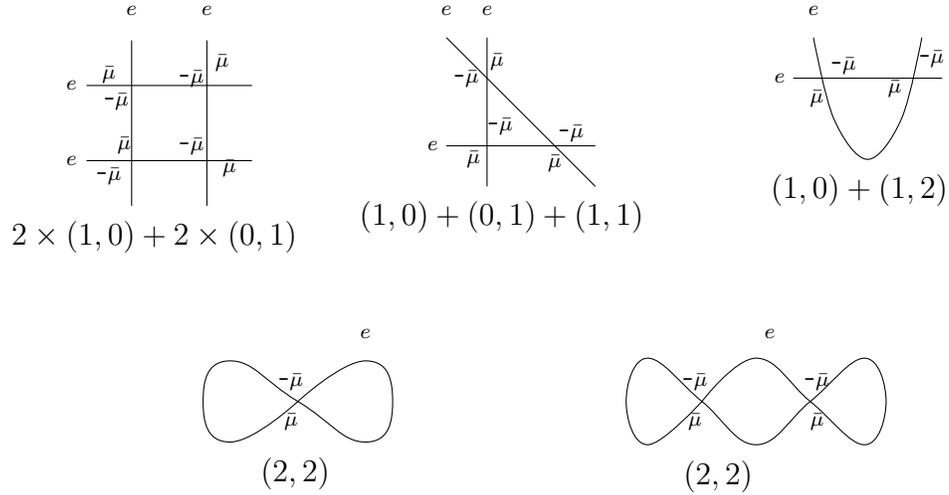

\begin{figure}[H]
  \centering
\begin{multicols}{4}
 \begin{tikzpicture}[scale=0.9]
 \node (1) at (0,1) {};
\draw [black] plot coordinates {(-0.6,0) (1.6,0)};
\draw [black] plot coordinates {(-0.6,-0.8) (1.6,-0.8)};
 \draw [black] plot [smooth, tension=0.6] coordinates {(-0.3,0.6) (0,-0.6) (0.5,-1.2) (1,-0.6) (1.3,0.6)};
 \end{tikzpicture}
 $2\times(1,0)+(1,2)$
   
\columnbreak

 \begin{tikzpicture}[scale=0.9]
 \node (1) at (1.7,1.2) {};
\draw [black] plot [smooth, tension=0.9] coordinates { (0,0) (1,0.6) (1.4,0) (1,-0.6) (0,0) (-1,0.6) (-1.4,0) (-1,-0.6) (0,0)};
\draw [black] plot coordinates {(1,1) (1,-1)};
\end{tikzpicture}
 $(2,2)+(0,1)$

\columnbreak
 
 \begin{tikzpicture}[scale=0.9]
 \node (1) at (3.6,1.4) {};
\draw [black] plot [smooth, tension=0.7, scale=0.8] coordinates { (0,0) (1,0.8) (2,0) (3,-0.8) (3.4,0) (3,0.8) (2,0) (1,-0.8) (0,0) (-1,0.8) (-1.4,0) (-1,-0.8) (0,0)};
\draw [black] plot coordinates {(0.5,1.2) (0.5,-1.2)};
\end{tikzpicture}
 $(2,2)+(0,1)$

\columnbreak
 
 \begin{tikzpicture}[scale=1.1, rotate=40]
 \node (1) at (0,1) {};
\draw [black] plot coordinates {(-0.6,0) (2,0)};
 \draw [black] plot [smooth, tension=0.6] coordinates {(-0.5,0.4) (-0.1,-0.1) (0.5,-0.3) (1.1,-0.1) (1.5,0.4)};
 \draw [black] plot [smooth, tension=0.6] coordinates {(-0.1,-0.5) (0.3,0) (0.9,0.3) (1.5,0) (1.9,-0.5)};
 \end{tikzpicture}
 $3\times (1,1)$
 
\end{multicols}

\caption{$2C_0+2F<[D]\leq3C_0+3F, e=2$}\label{figure e=2 on P1P1}
\end{figure}
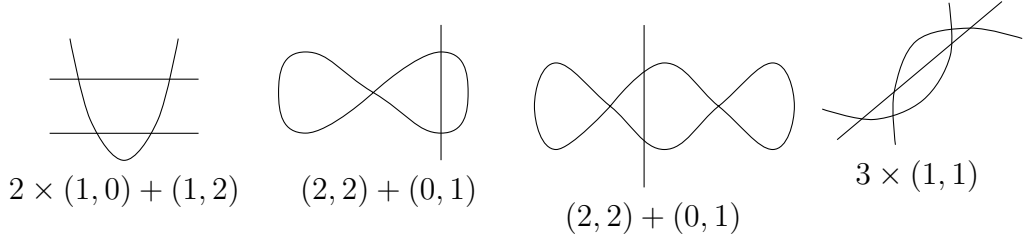

%%%%%%%%%%%%%%%%%%%%%%%%%%%%%%%%%%%%%%%%%%%%%%%%%%%%
\section{Minimal Models of CdPOs}\label{Minimal Models of CdPOs}

In this section we  classify minimal models of canonical del Pezzo orders (CdPOs). This classification will be used in Section \ref{Classification of CdPOs} to classify all CdPOs. 

\subsection{The Terminal resolution of CdPOs}\label{section Minimal Terminal orders for del Pezzo Canonical Orders}
\begin{definition}\label{minimal resolution for canonical orders}
Let $\Xc$ be a canonical order over $Z$. Then by \cite[Corollary 3.6]{CI2} there exists a sequence of blowups $f:\Yc\rightarrow \Xc$, called resolution of $\Xc$, where $\Yc$ is a terminal order. The resolution $f:\Yc\rightarrow \Xc$ is called minimal if $K_{\Yc}.E\geq 0$ for every $f$-exceptional curve $E$.
\end{definition}

\begin{definition}
Let  $\Xc$ be an order and let $f:\Yc\rightarrow\Xc$ be a resolution of $\Xc$. Then we have the following equation
\[
K_{\Yc}=K_{\Xc}+\sum_ia_iE_i,
\]
where  $E_i$s are $f$-exceptional curves. The order $\Xc$ has canonical singularities if $a=\min\{a_i\}\geq 0$.
\end{definition}

\begin{definition}\label{almost del pezzo order}
Let $\Xc$ be a maximal order over a Gorenstein surface $Z$. We call $\Xc$ almost del Pezzo if $ K_{\Xc}^2>0$, and for every effective curve $C\in Z$, $-K_{\Xc}C\geq 0$. By \cite[Theorem 2.2.16.]{lazarsfeld2017positivity} it is equivalent to $-K_{\Xc}$ being nef and big.
\end{definition}

\begin{proposition}
Let $\Xc$ be an order over a normal Gorenstein surface $Z$. Then $\Xc$ is almost del Pezzo if and only if $-K_{\Xc}$ is big and semiample.
\end{proposition}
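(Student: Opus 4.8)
The plan is to isolate the only genuine content of the statement. By Definition~\ref{almost del pezzo order} together with the cited equivalence from \cite{lazarsfeld2017positivity}, the assertion ``$\Xc$ is almost del Pezzo'' is literally the statement that $-K_{\Xc}$ is nef and big. Since ``semiample'' always implies ``nef'', the reverse implication will be immediate, so the substance of the proposition is the forward direction: on a normal Gorenstein surface the anticanonical $\Qb$-divisor $-K_{\Xc}=-(K_Z+\Delta)$ is semiample as soon as it is nef and big.

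For the easy direction, I would suppose $-K_{\Xc}$ is big and semiample. Semiampleness gives an integer $m>0$ for which $-mK_{\Xc}$ is Cartier and globally generated; a globally generated divisor is nef, and nefness descends to the $\Qb$-divisor $-K_{\Xc}$ since $(-K_{\Xc}).C=\frac{1}{m}(-mK_{\Xc}).C\geq 0$ for every curve $C$. Hence $-K_{\Xc}$ is nef and big, which is exactly the almost del Pezzo condition.

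For the forward direction I would invoke the Kawamata--Shokurov base-point-free theorem applied to the pair $(Z,\Delta)$ and the divisor $L:=-K_{\Xc}$. The key numerical input is that for every $a>0$ one has $aL-(K_Z+\Delta)=aL+L=(a+1)L$, which is nef and big precisely because $L$ is. Thus $L$ is a nef $\Qb$-Cartier divisor for which $aL-(K_Z+\Delta)$ is nef and big for some (indeed every) $a>0$, and the theorem yields that $L$ is semiample. Combined with the assumed bigness of $-K_{\Xc}$, this produces a big and semiample anticanonical divisor, as required.

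The main obstacle is verifying the hypotheses needed to run the base-point-free theorem over the possibly singular centre $Z$: namely that $(Z,\Delta)$ is a klt pair and that $L$ is $\Qb$-Cartier. The coefficients of $\Delta=\sum_i(1-1/e_i)D_i$ lie in $[0,1)$, so $\Delta$ is a genuine boundary with standard coefficients; since $Z$ is Gorenstein, $K_Z$ is Cartier, and one must still check that the ramification divisors $D_i$ are $\Qb$-Cartier and that the singularities of the pair are klt. In the setting of terminal and canonical orders this klt-ness is the log-theoretic translation of the order having at worst canonical singularities (cf.~\cite{CI2}), so $(Z,\Delta)$ is klt and the theorem applies; establishing this correspondence cleanly, and confirming it persists for the normal Gorenstein centres that actually occur in the classification, is the delicate point.
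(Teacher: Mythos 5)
Your reverse direction matches the paper's (semiample $\Rightarrow$ nef, and nef $+$ big is, by Definition~\ref{almost del pezzo order} and the cited result from \cite{lazarsfeld2017positivity}, exactly the almost del Pezzo condition). For the forward direction, however, you take a genuinely different route from the paper, and your route has a gap you flag but do not close. You want to run the Kawamata--Shokurov base-point-free theorem on the pair $(Z,\Delta)$ with $L=-K_{\Xc}$, using $aL-(K_Z+\Delta)=(a+1)L$ nef and big. This requires (i) that $L$, hence $\Delta$, hence each $D_i$, be $\Qb$-Cartier, and (ii) that $(Z,\Delta)$ be klt (or at least dlt). Neither is among the hypotheses of the proposition, which assumes only that $Z$ is normal Gorenstein and says nothing about the order being terminal or canonical. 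Normal Gorenstein surfaces can carry non-rational (e.g.\ simple elliptic) singularities, which are not log terminal even with zero boundary and need not be $\Qb$-factorial, so the appeal to ``klt-ness is the log-theoretic translation of canonical singularities of the order'' both imports an unstated hypothesis and, even granting it, is a nontrivial assertion you would have to extract from \cite{chan2008canonical} or \cite{CI2} rather than wave at. As written, the proof does not go through for the class of surfaces the proposition actually quantifies over.

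The paper's argument sidesteps all of this: it never invokes a log MMP statement about the pair. It shows the single intersection-number inequality $K_Z\cdot K_{\Xc}>0$, obtained from $(-K_{\Xc})(-K_Z-\Delta)=(-K_{\Xc})^2>0$ (bigness and nefness) together with $-K_{\Xc}\cdot\Delta\geq0$ ($\Delta$ effective, $-K_{\Xc}$ nef), and then cites \cite[Lemma 3.1]{laface2011nef} to conclude that the nef and big divisor $-K_{\Xc}$ is semiample. If you want to salvage your approach, you would either need to add the terminality/canonicity hypothesis and prove the klt correspondence, or replace the base-point-free theorem by a surface-specific semiampleness criterion such as the one the paper uses; the numerical computation you carried out ($(a+1)L$ nef and big) is essentially the same input, so the difference is entirely in which black box converts nef $+$ positivity against $K_Z$ into semiampleness.
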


\begin{proof}
Let $\Xc$ be an order over a normal Gorenstein surface $Z$ and let $-K_{\Xc}$ be  semiample. Thus restriction of $-K_{\Xc}$ to any curve $C$, $-{K_{\Xc}}_{|_C}$, is semiample and therefore $-K_{\Xc}C\geq 0$. So $-K_{\Xc}$ is  nef. Now let $-K_{\Xc}$ be nef and big. We show that $K_Z.K_{\Xc}>0$ then by \cite[Lemma 3.1]{laface2011nef} we see that $-K_{\Xc}$ is semiample. As $-K_{\Xc}$ is big and nef and $\Delta$ is effective, then $-K_{\Xc}(-K_Z-\Delta)=(-K_{\Xc})^2>0$ and $-K_{\Xc}\Delta\geq 0$. So we get $K_Z.K_{\Xc}>0$.
\end{proof}

\begin{proposition}\cite[Proposition 6.1]{chan2008canonical}\label{canonical bundle of resolution} 
Let $\Xc$ be a canonical order and let $f:\Yc\rightarrow\Xc$ be its minimal resolution. Then $K_{\Yc}=f^*K_{\Xc}$.
\end{proposition}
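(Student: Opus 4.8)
The plan is to play the two hypotheses off against each other: being \emph{canonical} gives a lower bound on the discrepancies, being a \emph{minimal} resolution gives a sign condition on the intersection numbers $K_{\Yc}.E$, and these two one-sided inequalities collide with the negative definiteness of the exceptional intersection form to force rigidity. Throughout, the relevant intersection theory takes place on the normal centres, where Mumford's $\Qb$-valued intersection theory and the projection formula are available. Reading the discrepancy equation of the second definition above as $K_{\Yc}\equiv f^*K_{\Xc}+\sum_i a_iE_i$ over the $f$-exceptional curves $E_i$, the canonical hypothesis supplies $a_i\geq 0$ for every $i$, and Definition \ref{minimal resolution for canonical orders} supplies $K_{\Yc}.E_j\geq 0$ for every $f$-exceptional $E_j$.

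First I would intersect the discrepancy equation with a fixed exceptional curve $E_j$. Since $E_j$ is $f$-exceptional we have $f_*E_j=0$, so the projection formula gives $f^*K_{\Xc}.E_j=K_{\Xc}.f_*E_j=0$, whence
\[
K_{\Yc}.E_j=\Big(\sum_i a_iE_i\Big).E_j .
\]
Writing $A:=\sum_i a_iE_i$, minimality now reads $A.E_j\geq 0$ for every $j$. The next step is to expand $A^2=\sum_j a_j\,(A.E_j)$: each summand is the product of the nonnegative number $a_j$ with the nonnegative number $A.E_j$, so $A^2\geq 0$.

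On the other hand, $A$ is supported entirely on the exceptional locus of $f$, and the intersection matrix $(E_i.E_j)$ of the exceptional curves of a proper birational morphism of normal surfaces is negative definite. Hence $A^2\leq 0$, with equality only when $A=0$. Combining the two inequalities forces $A^2=0$, and therefore $A=0$; that is, every $a_i$ vanishes and $K_{\Yc}\equiv f^*K_{\Xc}$. The substantive input is exactly this negative definiteness, which is what converts the bounds coming from ``canonical'' and from ``minimal'' into the vanishing $A=0$; the only point requiring a line of care is checking that, even though $\Xc$ and $\Yc$ are orders, $K_{\Xc}$ and the $E_i$ are honest $\Qb$-divisors on the normal centres $Z_{\Xc}$ and $Z_{\Yc}$, so that both the projection formula and the negative-definiteness theorem apply verbatim.
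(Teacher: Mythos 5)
Your argument is correct: the two one-sided bounds ($a_i\geq 0$ from the canonical hypothesis, $A.E_j=K_{\Yc}.E_j\geq 0$ from Definition \ref{minimal resolution for canonical orders}) combine to give $A^2=\sum_j a_j(A.E_j)\geq 0$, and Mumford's negative definiteness of the exceptional intersection matrix over a normal surface then forces $A=0$. Note that the paper itself supplies no proof here --- it simply cites \cite[Proposition 6.1]{chan2008canonical} --- so there is nothing internal to compare against; your proof is the standard crepant-resolution argument and fills that gap self-containedly, with the one point deserving care (that $K_{\Xc}$ and the $E_i$ live as $\Qb$-divisors on the centres, so the projection formula and negative definiteness apply) correctly flagged.
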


\begin{corollary}\label{minimal resolution of canonical orders}
 Let $\Xc$ be a del Pezzo canonical order and let $f:\Yc\rightarrow\Xc$ be its minimal resolution. Then $\Yc$ is almost del Pezzo.
\end{corollary}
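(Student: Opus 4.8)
The plan is to push the positivity of $-K_{\Xc}$ forward to $\Yc$ through the single structural identity supplied by Proposition \ref{canonical bundle of resolution}, namely $K_{\Yc}=f^*K_{\Xc}$, and then simply check the two defining conditions of Definition \ref{almost del pezzo order}: that $K_{\Yc}^2>0$ and that $-K_{\Yc}.C\geq 0$ for every effective curve $C$. Since $\Xc$ is del Pezzo, $-K_{\Xc}$ is ample, so $K_{\Xc}^2>0$ and $-K_{\Xc}.C'>0$ for every effective curve $C'$ downstairs; the whole content is to see how these strict inequalities transport across the birational morphism $f$.

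First I would handle the self-intersection. Because $f$ is birational and $K_{\Yc}=f^*K_{\Xc}$, the projection formula gives $K_{\Yc}^2=(f^*K_{\Xc})^2=K_{\Xc}^2$, and the ampleness of $-K_{\Xc}$ forces $K_{\Xc}^2>0$. Hence $K_{\Yc}^2>0$, which is the first requirement for $\Yc$ to be almost del Pezzo. Note also that the centre $Z_{\Yc}$ of the minimal resolution is smooth, hence Gorenstein, so the hypotheses of Definition \ref{almost del pezzo order} are met.

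Next I would verify nefness of $-K_{\Yc}$. For an effective curve $C\subset Z_{\Yc}$ the projection formula again yields $-K_{\Yc}.C=-f^*K_{\Xc}.C=-K_{\Xc}.f_*C$. There are two cases. If $C$ is $f$-exceptional then $f_*C=0$, so $-K_{\Yc}.C=0\geq 0$; if $C$ is not contracted then $f_*C$ is a genuine effective curve and ampleness of $-K_{\Xc}$ gives $-K_{\Xc}.f_*C>0\geq 0$. In either case $-K_{\Yc}.C\geq 0$, so $-K_{\Yc}$ is nef, and together with $K_{\Yc}^2>0$ this makes $-K_{\Yc}$ nef and big, i.e. $\Yc$ is almost del Pezzo.

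The argument is essentially a one-line consequence of $K_{\Yc}=f^*K_{\Xc}$, so there is no serious obstacle; the only point deserving attention is conceptual rather than technical. It is precisely the degradation of the strict inequality $-K_{\Xc}.f_*C>0$ to the weak inequality $-K_{\Yc}.C\geq 0$ along the exceptional locus that explains why the resolution of a del Pezzo canonical order need only be \emph{almost} del Pezzo and not del Pezzo: the exceptional $K_{\Yc}$-zero curves are exactly the loci where ampleness is lost. Making this explicit is what justifies working with the almost del Pezzo class in the subsequent classification.
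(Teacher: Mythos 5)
Your proof is correct and is exactly the argument the paper intends: the corollary is stated as an immediate consequence of Proposition \ref{canonical bundle of resolution} ($K_{\Yc}=f^*K_{\Xc}$), and your use of the projection formula to get $K_{\Yc}^2=K_{\Xc}^2>0$ and $-K_{\Yc}.C=-K_{\Xc}.f_*C\geq 0$ (with equality precisely on $f$-exceptional curves) is the standard verification the paper leaves implicit. Your closing remark about why ampleness degrades to nef-and-big along the exceptional locus is a nice articulation of the point of the definition.
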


\begin{corollary}\label{MMP for almost terminal}
Let $\Yc$ be a terminal almost del Pezzo order over $Z_{\Yc}$ and  let $\Wc$ over $Z_{\Wc}$ be a minimal terminal order obtained by running the minimal model program on $\Yc$. Then $\Wc$ is almost del Pezzo.
\end{corollary}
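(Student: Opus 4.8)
The plan is to show that each elementary step of the minimal model program preserves both defining conditions of an almost del Pezzo order---namely $K^2>0$ and nefness of $-K$---and then to conclude by induction on the number of contractions. By Proposition \ref{contracting terminal orders is terminal}, running MMP on $\Yc$ consists of finitely many contractions $\pi:\Xc\rightarrow\Xc'$ (the process terminates because each contraction drops the Picard rank of the centre by one), each collapsing a single irreducible curve $E$ with $E^2<0$ and $K_{\Xc}.E<0$, and each again producing a terminal order over a Gorenstein centre. It therefore suffices to treat one such contraction, assuming $\Xc$ is almost del Pezzo and showing that $\Xc'$ is as well.

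First I would record the discrepancy. Writing $K_{\Xc}\equiv\pi^*K_{\Xc'}+aE$ and intersecting with $E$, the projection formula gives $\pi^*K_{\Xc'}.E=K_{\Xc'}.\pi_*E=0$, whence $K_{\Xc}.E=aE^2$. Since both $K_{\Xc}.E$ and $E^2$ are negative, $a=K_{\Xc}.E/E^2>0$; this positivity is the one fact that makes everything work, and I would flag it as the crux.

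For bigness, squaring $K_{\Xc}\equiv\pi^*K_{\Xc'}+aE$ and using $(\pi^*K_{\Xc'})^2=K_{\Xc'}^2$ together with $\pi^*K_{\Xc'}.E=0$ yields
\[
K_{\Xc'}^2=K_{\Xc}^2-a^2E^2>K_{\Xc}^2>0,
\]
since $E^2<0$. For nefness, let $C'$ be any effective curve on $Z_{\Xc'}$ with strict transform $\tilde{C}$ on $Z_{\Xc}$, so that $\pi_*\tilde{C}=C'$. Then by the projection formula, $E.\tilde{C}\geq 0$, and $K_{\Xc}.\tilde{C}\leq 0$ (as $-K_{\Xc}$ is nef),
\[
K_{\Xc'}.C'=(K_{\Xc}-aE).\tilde{C}=K_{\Xc}.\tilde{C}-a\,E.\tilde{C}\leq 0,
\]
where the last inequality uses $a>0$. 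Hence $-K_{\Xc'}$ is nef.

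Combining the two displays, $\Xc'$ is again almost del Pezzo, and iterating over the finitely many MMP contractions gives the claim for $\Wc$. The only place demanding care is the nefness step: one must pass to strict transforms and invoke the projection formula correctly, and the sign $a>0$ is exactly what forces the correction term $-a\,E.\tilde{C}$ to be non-positive rather than non-positive-failing. Everything else is a formal intersection computation valid on the Gorenstein centres involved.
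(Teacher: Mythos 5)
Your proof is correct and follows essentially the same route the paper intends, since the paper leaves this corollary as an immediate consequence of Lemma \ref{contraction of del pezzo orders} with the strict inequalities relaxed to non-strict ones. The only cosmetic difference is that the paper tests nefness against the total transform $f^*C'$, for which the $E$-term vanishes outright, whereas you use the strict transform together with the positivity of the discrepancy $a$; both are valid.
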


\begin{lemma}\cite[Theorem 1]{chan2003pezzo}\label{C is smooth rational}
 Let $Z$ be a surface with canonical singularities and let $C$ be an irreducible curve. If $(K_Z+C)C<0$ and $K_ZC\geq0$, then $C$ is a smooth rational curve.
\end{lemma}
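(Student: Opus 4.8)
The engine of the proof is the adjunction formula: on a smooth surface one has $(K+C)C=2p_a(C)-2$, and an irreducible curve satisfies $p_a\geq 0$ with equality precisely for smooth rational curves. If $Z$ were smooth, the hypothesis $(K_Z+C)C<0$ would force $2p_a(C)-2<0$, hence $p_a(C)=0$, and we would be finished immediately (note $K_ZC\geq 0$ would not even be needed). The whole difficulty is that $Z$ is only canonical, so $C$ may pass through singular points and the naive adjunction computation on $Z$ is unavailable. The plan is therefore to transfer the inequality to the minimal resolution, where adjunction is legitimate, and then descend.

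First I would take the minimal resolution $\pi:\widetilde Z\to Z$. Since canonical surface singularities are rational double points, this resolution is crepant, so $K_{\widetilde Z}=\pi^*K_Z$ and every exceptional curve $E_i$ is a $(-2)$-curve with $K_{\widetilde Z}E_i=0$ (this is the surface analogue of Proposition \ref{canonical bundle of resolution}). Let $\widetilde C$ be the strict transform of $C$, which is again irreducible, and write the Mumford pullback $\pi^*C=\widetilde C+\sum_i a_iE_i$, where the coefficients $a_i$ are the rational numbers determined by the equations $(\pi^*C)E_j=0$ for all $j$.

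Next I would record the two comparison facts. Pairing $K_{\widetilde Z}=\pi^*K_Z$ with $\pi^*C$ and using $K_{\widetilde Z}E_i=0$ gives $K_{\widetilde Z}\widetilde C=K_ZC\geq 0$. For the self-intersection, $C^2=(\pi^*C)^2=(\pi^*C)\widetilde C=\widetilde C^2+\sum_i a_i(E_i\widetilde C)$, and since $\widetilde C\cdot E_j\geq 0$ while the exceptional intersection matrix is negative definite, the defining system forces $a_i\geq 0$; hence every term $a_i(E_i\widetilde C)$ is nonnegative and $\widetilde C^2\leq C^2$. Combining the two facts yields $(K_{\widetilde Z}+\widetilde C)\widetilde C=K_{\widetilde Z}\widetilde C+\widetilde C^2\leq K_ZC+C^2=(K_Z+C)C<0$. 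Adjunction on the smooth surface $\widetilde Z$ then gives $2p_a(\widetilde C)-2<0$, so $p_a(\widetilde C)=0$ and $\widetilde C\cong\Pb^1$ is a smooth rational curve.

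It remains to descend to $C$ itself. The morphism $\pi|_{\widetilde C}:\widetilde C\to C$ is birational with $\widetilde C$ smooth, hence is the normalization, so $C$ is certainly rational; I expect this descent, together with verifying $a_i\geq 0$, to be the main obstacle. To upgrade rationality to smoothness I would exploit the hypothesis $K_ZC\geq 0$, which through $K_{\widetilde Z}\widetilde C\geq 0$ and $p_a(\widetilde C)=0$ gives $\widetilde C^2=-2-K_{\widetilde Z}\widetilde C\leq -2$ and constrains $\sum_i a_i(E_i\widetilde C)=(K_Z+C)C+2<2$; the delicate point is to turn this numerical control on how $\widetilde C$ meets the $(-2)$-configuration into the statement that $\pi|_{\widetilde C}$ is an isomorphism, i.e. that $C$ acquires no singularity at the images of the exceptional locus. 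This local analysis at the Du Val points, rather than the global adjunction estimate, is where the real work lies.
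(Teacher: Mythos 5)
First, a point of comparison that matters here: the paper does not prove this lemma at all --- it is imported verbatim from \cite{chan2003pezzo} --- so there is no internal argument to measure yours against, and your attempt has to stand on its own. The first two thirds of it do: canonical surface singularities are Du Val, the minimal resolution $\pi\colon\widetilde{Z}\to Z$ is crepant, the Mumford pullback $\pi^*C=\widetilde{C}+\sum_ia_iE_i$ has $a_i\geq 0$ (here you need not just negative definiteness but also $E_iE_j\geq 0$ for $i\neq j$, which of course holds), and the comparisons $K_{\widetilde{Z}}\widetilde{C}=K_ZC$ and $\widetilde{C}^2\leq C^2$ give $(K_{\widetilde{Z}}+\widetilde{C})\widetilde{C}<0$, hence $p_a(\widetilde{C})=0$ and $\widetilde{C}\cong\Pb^1$. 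That honestly proves $C$ is an irreducible rational curve.

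The gap is exactly the step you postpone, and it is not a formality, because the lemma asserts that $C\subset Z$ is \emph{smooth} and $\widetilde{C}\cong\Pb^1$ alone does not give this: if $\widetilde{C}$ met the exceptional fibre over a Du Val point $z$ in two points, or tangentially, the image $C$ would acquire two branches or a cusp at $z$. To rule this out you must actually spend the inequality you derived, $\sum_ia_i(E_i\cdot\widetilde{C})=(K_Z+C)C+2<2$. Concretely, at each singular point $z\in C$ one shows that the local contribution $\delta_z=\sum_{E_i\subset\pi^{-1}(z)}a_i(E_i\cdot\widetilde{C})$ is at least $2$ whenever $Z_f\cdot\widetilde{C}\geq 2$, where $Z_f$ is the fundamental cycle (a finite check against the inverses of the $A$--$D$--$E$ intersection matrices; e.g.\ for $A_n$ with $\widetilde{C}$ meeting the two end curves once each, $\delta_z$ is exactly $2$, which is why the constant $2$ in your bound is sharp). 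Since $\delta_z<2$, this forces $Z_f\cdot\widetilde{C}=1$, and then $\mathrm{mult}_zC=Z_f\cdot\widetilde{C}=1$ because $\mathfrak{m}_z\Oc_{\widetilde{Z}}=\Oc(-Z_f)$ for rational surface singularities; so $C$ is smooth at $z$, and smoothness away from $\operatorname{Sing}Z$ follows from that of $\widetilde{C}$. As written, your proof establishes rationality but not smoothness; the missing piece is a finite local verification rather than a new idea, but without it the stated conclusion is not reached.
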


Chan and Kulkarni showed that if an order $\Xc$ on a normal Gorenstein surface $Z$ is del Pezzo, then the centre is del Pezzo. We want to generalize their result to almost del Pezzo orders. The proof is mostly the same, however, we need to prove the following Lemma.

\begin{lemma}\label{Effective curve.K_X}
Let $\Xc$ be a maximal order over a normal Gorenstein surface $Z$. If $\Xc$ is almost del Pezzo, then for every irreducible curve $C$, $K_ZC\leq 0$.
\end{lemma}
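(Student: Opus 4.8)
I would argue by contradiction: suppose some irreducible curve $C$ has $K_Z C>0$, and contradict the assumption that $-K_{\Xc}=-(K_Z+\Delta)$ is nef and big (Definition \ref{almost del pezzo order}). The plan splits into an easy reduction to the case where $C$ lies in the ramification, a geometric identification of $C$ via Lemma \ref{C is smooth rational}, and a final step that must invoke the arithmetic of the order itself.

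\textbf{Step 1 (reduction).} Since $-K_{\Xc}$ is nef, $(K_Z+\Delta)C\le 0$. If $C$ is not a component of the ramification divisor $D$, then $\Delta C\ge 0$ because $\Delta$ is effective and $C\not\subseteq\operatorname{supp}\Delta$; hence $K_Z C\le -\Delta C\le 0$, contrary to hypothesis. So $C$ must be a ramification curve, say $C=D_j$ with degree $e_j\ge 2$. Write $\Delta=\bigl(1-\tfrac1{e_j}\bigr)C+\Delta'$ with $\Delta'\ge 0$, $C\not\subseteq\operatorname{supp}\Delta'$, so $\Delta'C\ge 0$.

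\textbf{Step 2 (identify $C$).} From $(K_Z+\Delta)C\le 0$ and $\Delta'C\ge 0$ we get $K_Z C+\bigl(1-\tfrac1{e_j}\bigr)C^2\le 0$; since $K_Z C>0$ and $e_j\ge 2$ this forces $C^2<0$, and because $0<1-\tfrac1{e_j}<1$,
\[
(K_Z+C)C=K_Z C+C^2<K_Z C+\Bigl(1-\tfrac1{e_j}\Bigr)C^2\le 0.
\]
Thus $(K_Z+C)C<0$ while $K_Z C\ge 0$, so Lemma \ref{C is smooth rational} shows $C$ is smooth rational and the genus formula gives $(K_Z+C)C=-2$, i.e. $K_Z C=-2-C^2$. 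The hypothesis $K_Z C>0$ then reads $C^2\le -3$, and substituting $K_Z C=-2-C^2$ back into $(K_Z+\Delta)C\le 0$ produces the numerical bound
\[
\Delta'C\le 2+\frac{C^2}{e_j},
\]
whose right-hand side is $\le \tfrac12$ when $e_j=2$.

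\textbf{Step 3 (the crux).} The purely numerical inequalities above are in fact satisfiable — for instance the section $C_0$ of $\Fb_3$ (with $C_0^2=-3$, $K_{\Fb_3}C_0=1$) passes every test, since $-\bigl(K_{\Fb_3}+\tfrac12 C_0\bigr)=\tfrac32 C_0+5F$ is nef and big. Hence the proof must use that $\Xc$ is a genuine maximal order, not merely the effective $\Qb$-divisor $\Delta$. Being maximal, $\Xc$ induces on $C$ a cyclic cover of degree $e_j$ whose branch locus lies in the points where $C$ meets the remaining ramification (and the singular points of $Z$ on $C$). As $C\cong\Pb^1$ admits no nontrivial connected cover that is étale in codimension one, this cover must genuinely branch, and Riemann--Hurwitz forces at least two branch points. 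Each such point is an intersection of $C$ with some $D_i$ ($e_i\ge 2$) and therefore contributes at least $1-\tfrac1{e_i}\ge\tfrac12$ to $\Delta'C$; so $\Delta'C\ge 1$. For $e_j=2$ this already gives $\Delta'C\ge 1>\tfrac12\ge 2+C^2/e_j\ge\Delta'C$, a contradiction.

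\textbf{Main obstacle.} The difficulty is entirely in Step 3: upgrading ``$C$ is a smooth rational ramification curve'' to a lower bound on $\Delta'\cdot C$ that beats $2+C^2/e_j$ \emph{uniformly in $e_j$}. The case $e_j=2$ closes cleanly as above, but for larger $e_j$ one must track the ramification indices of the secondary cover at each node, matching them to the allowed local crossing configurations for the orders in question (where the two indices at a node are constrained, e.g. $e$ and $ne$). I would organize the final estimate around that classification of local data, so that each branch point contributes a controlled amount and the total forces $\Delta'C$ above the ceiling $2+C^2/e_j$, completing the contradiction and hence establishing $K_Z C\le 0$ for every irreducible $C$.
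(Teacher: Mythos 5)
Your Steps 1 and 2 coincide with the paper's own proof: the same reduction to the case where $C$ is a component of the ramification divisor, and the same rearrangement of $K_{\Xc}C=(K_Z+\Delta)C\le 0$ (the paper writes it as $\tfrac1e K_ZC+(1-\tfrac1e)(K_Z+C)C+\Delta'C\le 0$) to force $(K_Z+C)C<0$ so that Lemma \ref{C is smooth rational} applies. At that point the paper simply declares that ``the same contradiction would be reached'' as in \cite[Theorem 12]{chan2003pezzo}; that is, it delegates your entire Step 3 to the cited reference, so you are attempting to supply a part of the argument the paper does not write out.

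In that Step 3 there is a genuine gap, which you yourself flag: the estimate $\Delta'C\ge 1$ coming from ``at least two branch points, each contributing at least $\tfrac12$'' beats your ceiling $\Delta'C\le 2+C^2/e_j\le 2-3/e_j$ only when $e_j=2$, and your plan for $e_j\ge 3$ is a programme, not a proof. The uniform fix is to use the full Riemann--Hurwitz degree count rather than merely the number of branch points. From $K_{\Xc}C\le 0$, $K_ZC>0$ and adjunction $(K_Z+C)C\ge -2$ for the smooth rational curve $C$, the paper's decomposition gives the upper bound $\Delta'C<2-\tfrac{2}{e_j}$. On the other hand, the degree-$e_j$ cyclic cover $\widetilde{D}\to C\simeq\Pb^1$ attached to the order is connected, so $-2\le \deg K_{\widetilde{D}}=-2e_j+e_j\sum_q\bigl(1-\tfrac{1}{f_q}\bigr)$ yields $\sum_q\bigl(1-\tfrac{1}{f_q}\bigr)\ge 2-\tfrac{2}{e_j}$, while the secondary ramification indices at the branch points satisfy $\sum_q\bigl(1-\tfrac{1}{f_q}\bigr)\le\Delta'C$ because each branch point sits on some $D_i$ with $f_q\mid e_i$. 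Together these give $2-\tfrac{2}{e_j}\le\Delta'C<2-\tfrac{2}{e_j}$, a contradiction uniform in $e_j$ and with no need for the detour through $C^2\le -3$; this is exactly the computation in \cite[Theorem 12]{chan2003pezzo} that the paper invokes, and the same Riemann--Hurwitz pattern the paper reuses in its $\Fb_1$ and $\Fb_2$ analyses.
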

\begin{proof}

 Chan and Kulkarni showed that if an order $\Xc$ on the centre $Z$ is del Pezzo, then $K_ZC<0$ for every irreducible curve $C\in Z$, \cite[Theorem 12]{chan2003pezzo}. To do so, by contradiction it is assumed  that there is an irreducible curve $C$, such that $K_{\Xc}C<0$ but $K_ZC\geq 0$. Then the curve $C$ is a smooth rational curve, and it leads to a contradiction. Here we only need to show that if for any curve $C$, $K_{\Xc}C\leq 0$ and  $K_ZC>0$, then $C$ is smooth rational. For then,  the same contradiction would be reached.
 
 Thus let $\Delta=\sum_i(1-\frac{1}{e_i})D_i$ be the ramification configuration for the order $\Xc$ and let $C$ be an irreducible curve in $Z$.  If $C$ is not one of the ramification divisors $D_i$, then $\Delta C\geq0$ and the arguments is proved by the following equation
\[
 K_XC=(K_Z+\Delta)C\leq0.
\]
If $C$ is a ramification divisor, then without loss of generality we can assume that $C=D_1$ and the  ramification degree for $C$, $e=e_1$. $\Xc$ is almost del Pezzo, so $K_{\Xc}C\leq0$.
\begin{align*}
 0\geq K_{\Xc}C&=\left(K_Z+\sum_i\left(1-\frac{1}{e_i}\right)D_i\right)C\\
 &=\left(\frac{1}{e}(K_ZC)+\left(1-\frac{1}{e}\right)(K_Z+C)C+\sum_{i\neq1}\left(1-\frac{1}{e_i}\right)D_i\right)C.
\end{align*}
By contradiction, let $K_ZC>0$. As $\left(\sum_{i\neq1}\left(1-\frac{1}{e_i}\right)D_i\right)C\geq0$, then $(K_Z+C)C<0$. So by Lemma \ref{C is smooth rational},  we conclude that $C$ is a smooth rational curve.
\end{proof}

\begin{theorem}\label{almost del pezzo centre} 
Let $\Xc$ be a maximal order on a normal Gorenstein surface $Z$. Then if $\Xc$ is almost del Pezzo, so is $Z$.
\end{theorem}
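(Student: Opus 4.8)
The plan is to show directly that the anticanonical class $-K_Z$ is nef and big, which (applying Definition \ref{almost del pezzo order} to $Z$ itself, i.e.\ to the trivial order with $\Delta=0$ and $K_{\Xc}=K_Z$) is precisely the assertion that $Z$ is almost del Pezzo. The argument splits into a nefness part and a bigness part, and parallels Chan and Kulkarni's treatment of the del Pezzo case in \cite[Theorem 12]{chan2003pezzo}; the single new ingredient needed when the strict inequalities of the del Pezzo setting are relaxed to non-strict ones is supplied by Lemma \ref{Effective curve.K_X}.

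First I would dispose of nefness, which is immediate. Since $\Xc$ is almost del Pezzo, Lemma \ref{Effective curve.K_X} gives $K_Z C\leq 0$ for every irreducible curve $C\subset Z$; equivalently $-K_Z$ meets every irreducible curve non-negatively, so $-K_Z$ is nef. No further work is required here.

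The substance is the bigness of $-K_Z$, and I expect this to be the only genuine obstacle. I would deliberately avoid a self-intersection computation: writing $-K_Z=-K_{\Xc}+\Delta$ and expanding $(-K_Z)^2=(-K_{\Xc})^2+2(-K_{\Xc})\cdot\Delta+\Delta^2$ is unhelpful, because although $(-K_{\Xc})^2>0$ and $(-K_{\Xc})\cdot\Delta\geq 0$ (nef times effective), the term $\Delta^2$ may be negative and is not obviously dominated. Instead I would argue in the pseudo-effective cone. As $\Xc$ is almost del Pezzo, $-K_{\Xc}$ is big, hence lies in the interior of the pseudo-effective cone of $Z$ (by \cite{lazarsfeld2017positivity}). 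The discriminant $\Delta=\sum_i\left(1-\frac{1}{e_i}\right)D_i$ is an effective $\Qb$-divisor, hence pseudo-effective. Since the big cone is the interior of the (convex) pseudo-effective cone, adding a pseudo-effective class to an interior class keeps one in the interior; therefore $-K_Z=-K_{\Xc}+\Delta$ is again big. Equivalently, by Kodaira's lemma one may write $-K_{\Xc}\equiv H+E$ with $H$ ample and $E$ effective, so that $-K_Z\equiv H+(E+\Delta)$ exhibits $-K_Z$ as ample plus effective.

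Combining the two parts, $-K_Z$ is nef and big, which is exactly the conclusion that $Z$ is almost del Pezzo. The key decision is to route the bigness through the pseudo-effective cone rather than through an intersection-number estimate; the Gorenstein and canonical hypotheses on $Z$ are what guarantee that $K_Z$ is an honest $\Qb$-Cartier divisor and that the nef, big, and pseudo-effective formalism on $N^1(Z)_{\Rb}$ is available.
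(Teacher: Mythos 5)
Your proof is correct, and the nefness half is exactly the paper's: both you and the paper get $-K_Z\cdot C\geq 0$ for every irreducible curve $C$ from Lemma \ref{Effective curve.K_X}. Where you genuinely diverge is the bigness of $-K_Z$. The paper does use an intersection-number estimate, but not the one you (rightly) reject: rather than expanding $(-K_Z)^2=(-K_{\Xc}+\Delta)^2$ and fighting the possibly negative $\Delta^2$, it passes through the cross term, chaining $0<K_{\Xc}^2=K_{\Xc}K_Z+K_{\Xc}\Delta\leq K_{\Xc}K_Z=K_Z^2+K_Z\Delta\leq K_Z^2$, where the first inequality uses $K_{\Xc}\Delta\leq 0$ (nef against effective) and the second reuses Lemma \ref{Effective curve.K_X} to get $K_Z\Delta\leq 0$; then $-K_Z$ nef with $K_Z^2>0$ is big. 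Your route instead stays in the N\'eron--Severi cone: $-K_{\Xc}$ big lies in the interior of the pseudo-effective cone, $\Delta$ is pseudo-effective, so $-K_Z=-K_{\Xc}+\Delta$ stays in the interior, or equivalently Kodaira's lemma writes $-K_Z$ as ample plus effective. Your argument is shorter and only invokes Lemma \ref{Effective curve.K_X} once (for nefness), at the cost of importing the cone formalism of \cite{lazarsfeld2017positivity} on a possibly singular normal surface; the paper's argument is entirely elementary intersection theory of the kind already used throughout Section \ref{Minimal Models of CdPOs}, which is presumably why it was preferred. Either proof is acceptable.
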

\begin{proof}
$\Xc$ is almost del Pezzo, so $K_{\Xc}\Delta\leq0$ and $0<K_{\Xc}^2$, also by Lemma \ref{Effective curve.K_X}, $K_Z\Delta\leq0$. Then
\begin{align*}
0<K_{\Xc}^2&=K_{\Xc}(K_Z+\Delta)\\
&=K_{\Xc}K_Z+K_{\Xc}\Delta\\
&\leq K_{\Xc}K_Z\\
&=(K_Z+\Delta)K_Z\\
&=K_Z^2+K_Z\Delta\\
&\leq K_Z^2.
\end{align*}
This together with Lemma \ref{Effective curve.K_X} finishes the proof.
\end{proof}

\begin{theorem}\label{minimal terminal almost del Pezzo order}
Let  $\Wc$ be a  minimal terminal almost del Pezzo order on $Z$. Then we have one of the followings 

\begin{enumerate}
\item
$Z=\Pb^2$, and $\Wc$ is del Pezzo;
\item
$Z$  is a rational ruled surface. More precisely, $Z=\Fb_n$ for $n=0,1,$ or $2$.
\end{enumerate}
\end{theorem}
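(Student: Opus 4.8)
The plan is to reduce everything to the structural trichotomy of Corollary~\ref{centre of minimal terminal order} and then eliminate or identify each alternative using the almost del Pezzo hypothesis. First I would record two consequences of the hypotheses. By Theorem~\ref{almost del pezzo centre} the centre $Z$ is itself almost del Pezzo, so $-K_Z$ is nef and big and in particular $K_Z^2>0$; and since $\Wc$ is a terminal order, its centre $Z$ is smooth (as for all terminal orders). Because $\Wc$ is \emph{minimal} terminal, Definition~\ref{minimal terminal order} says no irreducible curve $C\subset Z$ satisfies both $K_{\Wc}C<0$ and $C^2<0$, so there is nothing to blow down and Corollary~\ref{centre of minimal terminal order} applies with $Z'=Z$ and $\Xc'=\Wc$. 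Thus exactly one of the three alternatives holds: $K_{\Wc}$ is nef; or $Z$ is a ruled surface $\pi:Z\to C$ with $-K_{\Wc}F>0$ for a fibre $F$; or $Z\simeq\Pb^2$ with $-K_{\Wc}$ ample.

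Next I would dispose of the nef alternative. If $K_{\Wc}$ were nef, then $K_{\Wc}C\ge 0$ for every curve $C$; but almost del Pezzo (Definition~\ref{almost del pezzo order}) gives $-K_{\Wc}C\ge 0$, i.e. $K_{\Wc}C\le 0$, for every effective $C$. Hence $K_{\Wc}C=0$ for all effective curves, so $K_{\Wc}$ is numerically trivial and $K_{\Wc}^2=0$, contradicting $K_{\Wc}^2>0$. The $\Pb^2$ alternative immediately yields case~(1), since ampleness of $-K_{\Wc}$ is precisely the del Pezzo condition.

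It remains to analyse the ruled alternative, and this is where the main work lies: one must translate between the order canonical $K_{\Wc}$, which governs the contraction in Corollary~\ref{centre of minimal terminal order}, and the centre canonical $K_Z$, which is constrained by Theorem~\ref{almost del pezzo centre}. I would first pin down the base: $Z$ is a smooth geometrically ruled surface over $C$, so $K_Z^2=8\bigl(1-g(C)\bigr)$; combined with $K_Z^2>0$ this forces $g(C)=0$, whence $C\simeq\Pb^1$ and $Z=\Fb_n$ for some $n\ge 0$. To bound $n$, I would use that $-K_Z$ is nef: writing $C_0$ for the section with $C_0^2=-n$, adjunction gives $-K_Z\cdot C_0=2-n$, so nefness forces $2-n\ge 0$, i.e. $n\le 2$. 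This yields $Z=\Fb_n$ with $n\in\{0,1,2\}$, completing case~(2).

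The step I expect to be the main obstacle is the ruled case, specifically making rigorous that the ``ruled surface'' produced by the order minimal model program is a genuine smooth $\Pb^1$-bundle, so that the formula $K_Z^2=8(1-g)$ and the Hirzebruch classification apply, together with keeping the nefness of $-K_{\Wc}$ carefully distinct from that of $-K_Z$. Once the smoothness of $Z$ and the geometric ruling are secured, both the determination of the base as $\Pb^1$ and the bound $n\le 2$ are routine intersection-theoretic checks.
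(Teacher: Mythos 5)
Your proposal is correct and follows essentially the same route as the paper: reduce to the structure theorem for minimal terminal orders, send the $\Pb^2$ alternative to case (1), and in the ruled case combine $K_Z^2=8(1-g)>0$ with nefness of $-K_Z$ on the minimal section (both supplied by Theorem~\ref{almost del pezzo centre}) to get $g=0$ and $n\le 2$. The only difference is that you explicitly eliminate the ``$K_{\Wc}$ nef'' branch of Corollary~\ref{centre of minimal terminal order} via $K_{\Wc}^2>0$, a step the paper's proof silently absorbs into its citation of \cite[Corollary 3.20]{CI2}; this is a welcome clarification but not a different argument.
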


\begin{proof}
 By \cite[Corollary 3.20]{CI2} we know for minimal  terminal orders, we have one of the following

 \begin{enumerate}
\item
$Z=\Pb^2$, and $\Wc$ is del Pezzo;
\item
$Z\rightarrow C$  is a ruled surface for a smooth rationa curve $C$.
\end{enumerate}
So we only need to show if for  a minimal terminal almost del Pezzo order the later occurs, then the surface is rationally ruled and  it is $\Fb_0, \Fb_1$, or $\Fb_2$.

Let $\Wc$ be a minimal almost del Pezzo order on $Z$ and let $\pi:Z\rightarrow C$ be the morphism surjecting $Z$ to the curve $C$. We note the arithmetic genus of $C$ by $g$. By the genus formula for ruled surfaces we have 
\begin{align*}
 K_{Z}C_0&=n+2g-2~\text{and}\\
 K_{Z}^2&=8(1-g),
\end{align*}
where $n=-C_0^2$. By Theorem \ref{almost del pezzo centre} we have that $Z$ is almost del Pezzo, so $K_{Z}^2>0$ and $K_{Z}C_0\leq 0$. 
Therefore $g=0$ and $n\leq 2$, i.e.
$Z_{\Wc}=\Fb_n$ for $0\leq n\leq 2$.
\end{proof}

%%%%%%%%%%%%%%%%%%%%%%%%%%%%%%%%%%%%%%%%%%%%%%%
\subsection{Minimal TAdPOs over Ruled Surfaces}\label{section Minimal almost del Pezzo terminal orders on rational ruled surfaces}
In this section we classify minimal TAdPOs over ruled surfaces $\Pb^1\times\Pb^1, \Fb_1$ and $\Fb_2$ . We let $Z$ denote any of these ruled surfaces if it is not specified which. 

\begin{proposition}\cite[Lemma 23]{chan2003pezzo}\label{genus for p-max}
Let $p$ be a prime integer dividing some ramification degree $e_i$ and let $p^{\text{max}}$ be the largest power of $p$ dividing any of the ramification degrees. Let $D_p$ be the union of all ramification divisors $D_i$ whose ramification degrees are divisible by $p^{\text{max}}$, then $p_a(D_p)\geq 1$, where $p_a(D_p)$ denotes the arithmetic genus of $D_p$. 
\end{proposition}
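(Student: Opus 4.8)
The plan is to translate the ramification data of $\Xc$ along $D_p$ into a covering of the curve $D_p$ and then read off the genus bound from the topology of that covering. Concretely, Artin's analysis of maximal orders on a smooth surface attaches to each ramification component $D_i$ a cyclic cover $\widetilde{D}_i\to D_i$ of degree $e_i$ (the secondary ramification), which is \'etale over $D_i$ away from the points where $D_i$ meets the other components of $D$, and whose local monodromy at each such crossing is controlled by the ramification degrees of the two branches through that point. First I would recall this structure and its compatibility at the nodes of $D$, which forces the familiar divisibility relation ($e_i\mid e_j$ or $e_j\mid e_i$) between the degrees of two crossing branches; in particular, wherever a component of $D_p$ meets a component outside $D_p$, the outside degree must divide the one that is divisible by $p^{\text{max}}$.

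Next I would pass to the $p$-primary part. Taking the $\Zb/p^{\text{max}}\Zb$-quotient of the secondary data on the components of $D_p$ produces a cyclic cover $\rho:\widetilde{D}_p\to D_p$ of degree $p^{\text{max}}>1$. The point to establish is that $\rho$ is \'etale over all of $D_p$ and nontrivial, equivalently that it corresponds to a nonzero class in $H^1(D_p,\Zb/p^{\text{max}}\Zb)$, i.e. to nontrivial $p^{\text{max}}$-torsion in $\operatorname{Pic}^0(D_p)$. Since $p\mid e_i$ for every $D_i\subset D_p$, the induced cyclic cover is nowhere split, so $\rho$ is not the trivial cover. Then I would invoke the topology of $D_p$: a connected nodal curve with $p_a=0$ is a tree of smooth rational curves, which is simply connected and has $\operatorname{Pic}^0=0$, hence admits no nontrivial connected \'etale cyclic cover. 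Applying this to a connected component of $D_p$ carrying nontrivial covering data yields $p_a(D_p)\geq 1$.

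The main obstacle is precisely the \'etaleness of $\rho$. At a point $P=D_i\cap D_j$ with $D_i\subset D_p$ but $D_j\not\subset D_p$, the curve $D_p$ is smooth at $P$, yet the secondary cover $\widetilde{D}_i\to D_i$ may ramify there, so I must check that its inertia at $P$ has order prime to $p$ -- equivalently, that this inertia dies in the $\Zb/p^{\text{max}}\Zb$-quotient -- which is exactly where the maximality of $p^{\text{max}}$ enters, since the $p$-part of any neighbouring degree is strictly smaller than $p^{\text{max}}$. Establishing this from Artin's local classification of the crossing, together with the compatible gluing of the per-component covers at the nodes internal to $D_p$ (so that the global monodromy is carried by the dual graph and detects exactly the cycles contributing to $p_a$), is the heart of the argument; the genus bound itself is then a formal consequence.
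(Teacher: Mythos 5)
A preliminary remark: the paper does not prove this proposition at all --- it is quoted as \cite[Lemma 23]{chan2003pezzo} --- so your attempt can only be measured against the argument in that reference. Your overall strategy (package the secondary ramification into a degree-$p^{\text{max}}$ cyclic cover of $D_p$, show it is \'etale and nontrivial, and contradict the triviality of $\pi_1$ and $\operatorname{Pic}^0$ of a tree of rational curves) is in the right spirit, but the step you yourself single out as the heart of the argument fails. At a crossing $P=D_i\cap D_j$ with $D_i\subset D_p$ and $D_j\not\subset D_p$, the inertia of $\widetilde{D}_i\to D_i$ at $P$ is cyclic of some order $r$ dividing $e_j$, and its image under $\Zb/e_i\twoheadrightarrow\Zb/p^{\text{max}}\Zb$ has order equal to the $p$-part of $r$: that quotient map kills exactly the prime-to-$p$ part of $\Zb/e_i$ and is \emph{injective} on its $p$-primary subgroup. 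Maximality of $p^{\text{max}}$ only tells you that the $p$-part of $e_j$ is at most $p^{\text{max}}/p$; it does not make it trivial. So whenever $p^{\text{max}}>p$ your cover $\rho$ can be genuinely ramified at such points, the class in $H^1(D_p,\Zb/p^{\text{max}}\Zb)$ does not exist, and the argument collapses. (There is a second problem: at an internal node $D_i\cap D_j$ with both components in $D_p$ the two per-component covers are in general ramified at the node itself, so there is no \'etale cover of the nodal curve $D_p$ to glue to in the first place.)

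The proof in \cite{chan2003pezzo} avoids globalizing and instead works one component at a time, allowing ramification but keeping track of where the inertia is \emph{full}. If some connected component of $D_p$ were a tree of smooth rational curves, pick a leaf $D_i\simeq\Pb^1$ of that tree. The degree-$p^{\text{max}}$ quotient of $\widetilde{D}_i\to D_i$ is a connected cyclic cover of $\Pb^1$ of degree $p^{\text{max}}>1$, branched only over $D_i\cap(D-D_i)$; since the product of the local monodromies is trivial and the non-generators of $\Zb/p^{\text{max}}\Zb$ form a proper subgroup, at least \emph{two} branch points must have monodromy generating all of $\Zb/p^{\text{max}}\Zb$. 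Full inertia at $P\in D_i\cap D_j$ forces $p^{\text{max}}\mid r\mid e_j$, i.e. $D_j\subset D_p$. Hence every rational component of $D_p$ meets the rest of $D_p$ in at least two points, no connected component can be a tree of rational curves, and $p_a(D_p)\geq 1$. If you wish to salvage your cohomological formulation you must weaken ``\'etale'' to ``ramified with non-full inertia at crossings leaving $D_p$'' and then run precisely this monodromy count; as written, the \'etaleness claim is false and with it the appeal to simple connectivity.
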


Let $D$ be a ramification divisor of some order over the rational ruled surface $\Fb_n$. If $p$ is a prime number dividing any ramification degree and $D_p=a_pC_0+b_pF$ for some $a_p$ and $b_p$, then by genus formula and Proposition \ref{genus for p-max} we have
\begin{equation}\label{genus}
2p_a(D_p)=(a_p-1)(2b_p-na_p-2)\geq 2.
\end{equation}

\begin{remark}\label{ramification degree a=3}
Let $\Xc$ denote a minimal terminal order over a rational ruled surface $Z$ and let $D=\cup D_i\equiv 3C_0+bF$ be its ramification divisor. Then the ramification degrees of the divisors intersecting $F$ is $2$. Further, by Proposition \ref{genus for p-max} and Equation \ref{genus} all other ramification degrees divide $2$, therefore they are all equal.
\end{remark}

\subsubsection*{Z=$\Pb^1\times\Pb^1$}\label{subsection Z=P*P}
Let $\Xc$ be a minimal terminal order over $Z=\Pb^1\times\Pb^1$ and let $D=\cup D_i\equiv aC_0+bF$ be the ramification divisors and the ramification degrees. Then   $2\leq a,b\leq 3$ and all the ramification degrees $e_i=e$. Further, $e=2$ if $2C_0+2F<[D]$. Considering Proposition \ref{centre of del pezzo on PP}, we see that if $\Xc$ is a minimal TAdPO over $\PP$ then it is actually del Pezzo  and the classification is as before.

\subsubsection*{Z=$\Fb_1$}\label{subsection Z=F1}
Now assume that $\Xc$ denotes a minimal TAdPO over $Z=\Fb_1$. And let $\cup D_i$ $\{e_i\}$ denote its ramification divisors and the ramification degrees.  Also let $C_0$ be the minimal section with  $C_0^2=-1$, and $F$ a fixed fibre. For every $i$ and for suitable $a_i$ and $b_i$ we have 
$D_i\equiv a_iC_0+b_iF$. We also  set $D:=\cup D_i\equiv aC_0+bF$ for non-negative integers $a$ and $b$. 

 Since $\Xc$ is a minimal terminal order and $C_0^2<0$, then $K_{\Xc}C_0\geq 0$. On the other hand, $\Xc$ is almost del Pezzo, so $K_{\Xc}C_0\leq 0$. Thus $K_{\Xc}C_0=0$.
 \begin{align*}
  0=K_{\Xc}C_0 & =\left(-2C_0-3F+\sum(1-\frac{1}{e_i})(a_iC_0+b_iF)\right)C_0\\
  & =2-3+\sum(1-\frac{1}{e_i})(-a_i+b_i)
 \end{align*}
  \begin{equation}\label{F_1 a_i,b_i}
   \Rightarrow~~~  \sum(1-\frac{1}{e_i})b_i=\sum(1-\frac{1}{e_i})a_i+1
  \end{equation}

\begin{proposition}
The minimal section $C_0$ on a minimal TAdPO over $\Fb_1$ is unramified.
\end{proposition}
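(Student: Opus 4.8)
The plan is to argue by contradiction. Suppose $C_0$ is a ramification divisor, say $C_0=D_1$ with ramification degree $e=e_1$, and write $R:=\sum_{i\geq 2}D_i$ for the remaining ramification, so $D=C_0+R$ and (since ramification divisors are reduced) $R$ carries no $C_0$-component. The first step is to feed the already-established identity $K_{\Xc}C_0=0$ into the expansion $\Delta=(1-\tfrac1e)C_0+\sum_{i\geq2}(1-\tfrac{1}{e_i})D_i$. Using $K_ZC_0=-1$ and $C_0^2=-1$ on $\Fb_1$ this yields
\[
0=K_{\Xc}C_0=-1-\Big(1-\tfrac1e\Big)+\sum_{i\geq2}\Big(1-\tfrac{1}{e_i}\Big)(D_i\cdot C_0),
\]
so that $\sum_{i\geq2}(1-\tfrac{1}{e_i})(D_i\cdot C_0)=2-\tfrac1e$. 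Thus $C_0$ genuinely meets $R$, and the weighted number of nodes along $C_0$ is pinned down; in the principal case where all degrees equal $e=2$ this collapses to $C_0\cdot R=3$.

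The heart of the argument is the secondary ramification that $C_0$ must carry. Because $\Xc$ is a maximal order, restricting its Brauer class to the ramification curve $C_0\cong\Pb^1$ produces a cyclic cover of $\Pb^1$ branched exactly at the nodes $C_0\cap R$, with local invariants the indices ($\mu$, $-\mu$) attached to those nodes. Such a cover exists only if the reciprocity (sum-zero) law holds for these invariants in $\Zb/e\Zb$; this is the curve-level refinement of Proposition \ref{genus for p-max}. I would show the node data forced by the displayed equation violates reciprocity. The cleanest instance is $e=2$: the double cover of $\Pb^1\cong C_0$ would be branched over the $C_0\cdot R=3$ nodes, but a double cover of $\Pb^1$ branches over an even number of points, a contradiction. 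For the general (a priori unequal) degrees I would reduce to a prime $p\mid e$ and pass to the associated $\Zb/p\Zb$-cover, equivalently to the curve $D_p$ of Proposition \ref{genus for p-max}: since $p_a(C_0)=0<1$, that proposition forces $C_0\subsetneq D_p$, and the genus inequality \ref{genus} then bounds $a_p,b_p$ tightly enough that the induced branch data on $C_0$ again cannot be balanced.

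I expect the main obstacle to be precisely this reciprocity step — identifying the local secondary-ramification invariants at the nodes of $C_0$ and proving their sum cannot vanish — since the purely numerical constraints ($K_{\Xc}C_0=0$, the bound $(K_Z+\Delta)^2>0$ forcing the discriminant coefficients small, and the genus inequality \ref{genus}) are by themselves jointly satisfiable with $C_0\subset D$ and do \emph{not} close the argument. Two technical points will need care: first, the reduction of the possibly unequal $e_i$ to a single prime $p$ so that the relevant cover of $C_0$ is genuinely cyclic of order $p$; and second, confirming that $R$ contributes no secondary ramification beyond the nodes $C_0\cap R$, which I would obtain from the normal-crossing structure of the ramification of a terminal order. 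Once the branch data along $C_0$ is seen to be unbalanced, the contradiction — and hence the unramifiedness of $C_0$ — is immediate.
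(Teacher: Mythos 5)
Your mechanism is the paper's mechanism: assume $C_0=D_1$ is ramified, extract $\sum_{i\geq2}\bigl(1-\tfrac{1}{e_i}\bigr)(D_i\cdot C_0)=2-\tfrac{1}{e_1}$ from $K_{\Xc}C_0=0$, and derive a parity contradiction from the cyclic cover $\widetilde{D}\rightarrow C_0\simeq\Pb^1$ attached to the ramification of the order along $C_0$. Your $e=2$ computation (a double cover of $\Pb^1$ cannot branch over the $3$ points of $C_0\cap R$) is exactly the paper's argument in that case.

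The gap is the general case, which you explicitly leave open and for which your proposed route is both unexecuted and unnecessary. You predict that the numerical identity alone does not close the argument and that one must verify a sum-zero reciprocity condition in $\Zb/e\Zb$ after reducing to a prime $p$ and invoking Proposition \ref{genus for p-max}; but you never show that the genus inequality actually forces the branch data to be unbalanced, so as written the proof only covers $e=2$. In fact no reciprocity analysis is needed: the cover $\widetilde{D}\rightarrow C_0$ of degree $e_1$ exists as part of the ramification data of the maximal order, and Riemann--Hurwitz applied to it with secondary ramification indices $e_i$ occurring $D_i\cdot C_0$ times gives
\[
\deg K_{\widetilde{D}}=-2e_1+e_1\sum_{i\geq2}\Bigl(1-\frac{1}{e_i}\Bigr)(D_i\cdot C_0)=-2e_1+e_1\Bigl(2-\frac{1}{e_1}\Bigr)=-1,
\]
which is odd and therefore impossible for any smooth projective curve, with no case analysis on the $e_i$ and no appeal to Proposition \ref{genus for p-max}. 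This is how the paper concludes; your $e=2$ branch-point count is the specialization of this one-line computation, and running it at the level of the full degree-$e_1$ cover is what closes the case you flagged as the obstacle.
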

\begin{proof}
Suppose instead that the ramification divisors are $D_1, \ldots, D_n$ with $D_1=C_0$, so $a_1=1$ and $b_1=0$. Equation  \ref{F_1 a_i,b_i} states
\begin{equation}\label{F_1 a_i,b_i 2}
\sum_1^n(1-\frac{1}{e_i})(b_i-a_i)=1.
\end{equation}
Let $\widetilde{D}\rightarrow D_1\simeq\Pb^1$ be the degree $e_1$ cover corresponding to the ramification of the order on $D_1$. Now all the $e_i|e_1$ so the secondary ramification indices on $D_1$ are $e_2, \ldots, e_n$ which occur $D_i.D_1=b_i-a_i$ times. Hence Riemann-Hurwitz and Equation \ref{F_1 a_i,b_i 2} give
\begin{align*}
\deg K_{\widetilde{D}}&=-2e_1+e_1\sum_{i=2}^n\left(1-\frac{1}{e}\right)(b_i-a_i)\\
&=-e_1-1-e_1\left(1-\frac{1}{e_1}\right)+e_1\sum_{i=2}^n\left(1-\frac{1}{e}\right)(b_i-a_i)\\
&=-e_1-1-e_1\sum_{i=1}^n\left(1-\frac{1}{e}\right)(b_i-a_i)=-1.
\end{align*}
This contracts the fact that $\deg K_{\widetilde{D}}$ is even.
\end{proof}
 
 \begin{theorem}\label{almost del pezzo on F1}
 Let $\Xc$ be a minimal TAdPO over $\Fb_1$ and let $D=\cup D_i\equiv aC_0+bF$ be its ramification divisors.  Then 
 \begin{enumerate}
 \item
 $D\equiv 2C_0+4F$~or
 \item
 $D\equiv 3C_0+5F$.
 \end{enumerate}
 Furthermore, the  ramification degrees are all equal to $2$.
 \end{theorem}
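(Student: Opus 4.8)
The plan is to pin down both the class $D\equiv aC_0+bF$ and the ramification degrees by converting the almost del Pezzo hypotheses into numerical constraints on the two weighted sums $\alpha:=\sum_i(1-\tfrac1{e_i})a_i$ and $\beta:=\sum_i(1-\tfrac1{e_i})b_i$, where $D_i\equiv a_iC_0+b_iF$. On $\Fb_1$ one has $K_Z\equiv-2C_0-3F$, together with $C_0^2=-1$, $F^2=0$, $C_0F=1$, so $\Delta\equiv\alpha C_0+\beta F$ and $K_{\Xc}\equiv(\alpha-2)C_0+(\beta-3)F$. By the preceding proposition $C_0$ is unramified, and Equation \ref{F_1 a_i,b_i} reads $\beta=\alpha+1$; substituting yields the clean expression $K_{\Xc}\equiv(\alpha-2)(C_0+F)$, which I would exploit throughout.

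First I would extract the bound $\alpha<2$. Since $(C_0+F)^2=1$ we get $K_{\Xc}^2=(\alpha-2)^2$, so the almost del Pezzo condition $K_{\Xc}^2>0$ forces $\alpha\neq2$. As $-K_{\Xc}$ is nef and the effective cone of $\Fb_1$ is generated by $C_0$ and $F$ (Remark \ref{effective cone}), nefness need only be tested on these: $-K_{\Xc}C_0=0$ holds automatically, while $-K_{\Xc}F=2-\alpha$, so $\alpha\le2$, hence $\alpha<2$. Next I would bound $a=\sum_ia_i$: every $e_i\ge2$ gives $\alpha\ge\tfrac12a$, so $a\le3$; and the genus inequality (Proposition \ref{genus for p-max} with Equation \ref{genus}, $n=1$) says that for any prime $p$ dividing some $e_i$ the class $D_p\equiv a_pC_0+b_pF$ satisfies $(a_p-1)(2b_p-a_p-2)\ge2$, which is impossible unless $a_p\ge2$; thus $a\ge a_p\ge2$. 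Therefore $a\in\{2,3\}$.

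The case $a=3$ is quick: Remark \ref{ramification degree a=3} shows every ramification degree equals $2$, whence $\alpha=\tfrac32$, $\beta=\tfrac52$, and $b=2\beta=5$, giving $D\equiv3C_0+5F$. The substantive case is $a=2$, where I must still prove that all $e_i=2$; granting this, $\beta=\tfrac12b=\alpha+1=2$ yields $b=4$ and $D\equiv2C_0+4F$. In both situations the conclusion collapses to $b=a+2$ with all degrees equal to $2$, as claimed.

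The hard part is forcing $e_i=2$ when $a=2$. Here the genus inequality already forces $a_p=2$ for every prime $p$ dividing a degree, so every component with $a_i>0$ lies in each $D_p$, i.e.\ carries the maximal $p$-power for every such $p$; comparing valuations across all these primes then shows that these components share one common degree $e$ and that every ramification degree divides $e$. It remains to rule out $e>2$. For this I would use the identity $\sum_i(1-\tfrac1{e_i})(b_i-a_i)=1$ (this is $\beta-\alpha=1$), in which every summand is non-negative because $b_i-a_i=D_iC_0\ge0$: the components with $a_i>0$ contribute $(1-\tfrac1e)(b^{(1)}-2)$ with $b^{(1)}=\sum_{a_i>0}b_i$, while each fibral component contributes $1-\tfrac1{e'_j}\ge\tfrac12$. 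Feeding in the genus bound $b_p\ge3$ to control $b^{(1)}$ and the divisibility $e'_j\mid e$, one checks that the total can equal exactly $1$ only when $e=2$: any $e\ge3$ either makes the $a_i>0$ contribution alone overshoot $1$, or leaves a residue $\tfrac1e<\tfrac12$ that no fibral term can supply. I expect this last integrality and divisibility bookkeeping—particularly the subcase in which fibral components of degree strictly smaller than $e$ occur—to be the only genuinely delicate point; everything else is linear algebra in $\mathrm{Pic}(\Fb_1)$.
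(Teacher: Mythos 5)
Your proposal is correct and follows essentially the same route as the paper: the identity $\sum_i\left(1-\frac{1}{e_i}\right)(b_i-a_i)=1$ coming from $K_{\Xc}C_0=0$ with $C_0$ unramified, combined with the genus inequality of Proposition \ref{genus for p-max} applied to a prime isolating the horizontal components (giving $a_p=2$, $b_p\geq 3$), forces every ramification degree to equal $2$, after which $b=a+2$ yields the two cases. The only differences are cosmetic: you make explicit the bound $a\in\{2,3\}$ and the decomposition $K_{\Xc}\equiv(\alpha-2)(C_0+F)$, which the paper leaves implicit, and the ``delicate bookkeeping'' you flag at the end is exactly the paper's observation that a nonzero term $\left(1-\frac{1}{e_i}\right)(b_i-a_i)$ in a sum equal to $1$ forces $e_i=2$.
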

 
 \begin{proof} 
In Equation \ref{F_1 a_i,b_i 2}, the intersection numbers $b_i-a_i\geq0$ since $C_0$ is unramified. Hence the non-zero terms in \ref{F_1 a_i,b_i 2} can only come from summing $\frac{1}{2}$'s corresponding to $e_i=2$. In particular, all ramified fibres have ramification index $2$. Suppose the ramified (bi)-section has ramification index greater than $2$. Then we can find some prime $p$ such that $D_p$ is just the ramified bi-section. We may assume $a=2$ so Equation \ref{genus} gives $a_p=2$ and $b_p\geq3$. Hence, at least one of the ramified bi-sections has $b_i-a_i>0$ and, giving rise to a non-zero term in Equation \ref{F_1 a_i,b_i 2}, must have ramification index $2$, a contradiction.

Now that all the indices are equal to $2$, Equation \ref{F_1 a_i,b_i} becomes
\[
\frac{1}{2}b=\frac{1}{2}a+1.
\]
Therefore, if $a=2$ or $a=3$, then respectively $b=4$ and $b=5$
 \end{proof}

\subsubsection*{Z=$\Fb_2$}\label{subsection Z=F2}
Let $C_0$ be the minimal section with $C_0^2=-2$ and $F$ a fixed fibre of the ruled surface $\Fb_2$.  Recall that the canonical divisor is $K_Z\equiv -2C_0-4F$. For suitable $a_i$ and $b_i$ we have 
$D_i\equiv (a_iC_0+b_iF)$. We also  set $D:=\cup D_i$ and $aC_0+bF=\cup (a_iC_0+b_iF)$ for non negative integers $a$ and $b$.

\begin{proposition}\label{F_2 3 cases}
Consider a minimal TAdPO over $\Fb_2$ ramified on a bisection. Then one of the following holds:
\begin{enumerate}
\item
$C_0$ is ramified and the secondary ramification on $C_0$ occurs at precisely two points,
\item
$D$ consists of two sections, both linearly equivalent to $C_0+2F$,
\item
$D\sim 2C_0+4F$ consists of a single bi-section.
\end{enumerate}
\end{proposition}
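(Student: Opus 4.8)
The plan is to distill a single numerical identity out of minimality plus the almost del Pezzo hypothesis, and then read off the three configurations according to whether or not $C_0$ is itself a ramification component. Write $\Xc$ for the order, $D_1=C_0$ when $C_0$ is ramified, and recall that on $\Fb_2$ the genus formula gives $K_ZC_0=0$ while $C_0^2=-2<0$. Since $\Xc$ is a minimal terminal order, Definition \ref{minimal terminal order} forces $K_{\Xc}C_0\ge 0$; since $\Xc$ is almost del Pezzo, $-K_{\Xc}$ is nef, so $K_{\Xc}C_0\le 0$. Hence $K_{\Xc}C_0=0$, and because $K_{\Xc}=K_Z+\Delta$ with $K_ZC_0=0$ this is precisely
\[
\Delta\cdot C_0 \;=\; \sum_i\left(1-\frac{1}{e_i}\right)(b_i - 2a_i) \;=\; 0,\qquad (\ast)
\]
whose every term is sign-controlled: an irreducible component $D_i\ne C_0$ satisfies $D_i\cdot C_0=b_i-2a_i\ge 0$, while $C_0$ itself (if ramified) contributes the lone strictly negative term $-2(1-1/e)$. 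Throughout I read ``ramified on a bisection'' as $D\cdot F=a=2$; the arithmetic genus inequality (\ref{genus}), which forces $a_p\ge 2$ for any prime $p$ dividing a ramification degree, is what guarantees we are genuinely in this regime.

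Next I would split on whether $C_0$ is a ramification component. If it is \emph{not}, then every summand in $(\ast)$ is non‑negative and hence vanishes: each $D_i$ satisfies $b_i=2a_i$, i.e. $D_i\cdot C_0=0$, and in particular there is no ramified fibre, since a fibre would give $b_i-2a_i=1>0$. Thus every component is disjoint from $C_0$ and lies in $|a_i(C_0+2F)|$ with $a_i\ge 1$, and the partition $\sum_i a_i=a=2$ leaves exactly two possibilities: a single irreducible bisection $D\sim 2C_0+4F$ (case 3), or two sections each linearly equivalent to $C_0+2F$ (case 2).

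If instead $C_0$ \emph{is} ramified, with ramification degree $e=e_1$, I would run the Riemann–Hurwitz argument on the induced cyclic cover $\phi\colon\widetilde{C_0}\to C_0\cong\Pb^1$, exactly as in the $\Fb_1$ computation preceding Theorem \ref{almost del pezzo on F1}. The secondary ramification sits over the points where the remaining components meet $C_0$, with indices $e_i\mid e$ by Proposition \ref{genus for p-max}, so
\[
\deg K_{\widetilde{C_0}} \;=\; -2e + e\sum_{i\neq 1}\left(1-\frac{1}{e_i}\right)(b_i - 2a_i).
\]
Rearranging $(\ast)$ gives $\sum_{i\neq 1}(1-1/e_i)(b_i-2a_i)=2(1-1/e)$, whence $\deg K_{\widetilde{C_0}}=-2$ and $\widetilde{C_0}$ has genus $0$. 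A connected cyclic cover of $\Pb^1$ of genus $0$ is totally ramified over exactly two points, so the secondary ramification meets $C_0$ at precisely two points — this is case 1. As a consistency check, combining this count of intersection points with $D\cdot F=2$ also pins the total class at $2C_0+4F$, matching cases 2 and 3.

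I expect the delicate step to be this last one, the production of ``precisely two points.'' One must verify that $\widetilde{C_0}$ is connected, so that Riemann–Hurwitz genuinely forces two totally ramified branch points rather than several partially ramified ones (for composite $e$ the connectivity is exactly what rules out, say, three index‑$2$ branch points); one must use $e_i\mid e$ from Proposition \ref{genus for p-max}; and one must arrange that the other components meet $C_0$ transversally so that ``branch points'' and ``intersection points'' coincide. By contrast, the non‑negativity of the non‑$C_0$ terms in $(\ast)$ and the enumeration of the partitions of $a=2$ are routine, so the whole argument hinges on the cover computation — which, pleasantly, concludes oppositely to the $\Fb_1$ case precisely because $C_0^2=-2$ rather than $-1$.
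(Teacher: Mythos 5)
Your proposal is correct and follows essentially the same route as the paper: the identity $K_{\Xc}C_0=0$ from minimality plus nefness of $-K_{\Xc}$, the sign analysis of $\sum_i(1-1/e_i)(b_i-2a_i)=0$ when $C_0$ is unramified (giving cases 2 and 3), and the Riemann--Hurwitz computation $\deg K_{\widetilde{C_0}}=-2$ on the cyclic cover forcing exactly two branch points when $C_0$ is ramified (case 1). The paper's proof is the same argument in slightly more compressed form.
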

\begin{proof}
Let $D\equiv aC_0+bF=\cup (a_iC_0+b_iF),~\{e_i\}$ be a ramification configuration for a minimal AdPO over $\Fb_2$. 
Similar to the case $\Fb_1$, we know $K_{\Xc}C_0=0$. Therefore,
 \begin{align}\label{F2 divisors}
  0=K_{\Xc}C_0 & =\left(-2C_0-4F+\sum_{i=1}^n(1-\frac{1}{e_i})(a_iC_0+b_iF)\right)C_0\nonumber\\
  & =4-4+\sum_{i=1}^n(1-\frac{1}{e_i})(-2a_i+b_i)\nonumber\\
&=\sum_{i=1}^n(1-\frac{1}{e_i})(b_i-2a_i).
 \end{align}
If $C_0$ is unramified, then all the intersection numbers $b_i-2a_i\geq0$, so the terms in Equation \ref{F2 divisors} are also non-negative and hence zero. This gives cases 2 and 3. We now assume that $D_1=C_0$ is ramified. Again let $\widetilde{D}\rightarrow D_1$ be the cyclic cover describing the ramification of the order at $D_1=C_0$. Now Equation \ref{F2 divisors} and Riemann-Hurwitz give
\[
2e_1\left(1-\frac{1}{e_1}\right)=e_1\sum_{i=1}^n\left(1-\frac{1}{e_i}\right)D_i.D_1=\deg K_{\widetilde{D}}+2e_1.
\]
Hence $\deg K_{\widetilde{D}}=-2$ and $\widetilde{D}\simeq\Pb^1$. But any cyclic cover $\Pb^1\rightarrow\Pb^1$ is ramified at precisely two points so we are in case 1.
\end{proof}
 \begin{theorem}\label{almost del pezzo on F2}
 Let $\Xc$ be a minimal TAdPO over  $\Fb_2$ and let $D,~\{e_i\}$ be its ramification divisors and degrees. Then
 \begin{enumerate}
 \item
 $D\equiv 2C_0+4F$~or
 \item
 $D\equiv 3C_0+6F$.
 \end{enumerate}
 Furthermore, the ramification degrees are all equal and in the second case they are $2$.
 \end{theorem}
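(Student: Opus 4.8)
The plan is to run the same strategy as in the $\Fb_1$ case (Theorem \ref{almost del pezzo on F1}), organized around the single numerical identity $K_{\Xc}C_0=0$ recorded in Equation \ref{F2 divisors}, and to pin down the class $aC_0+bF$ of $D$ by bounding $a$ first and then solving for $b$. Writing $\alpha:=\sum_i(1-\tfrac1{e_i})a_i$ and $\beta:=\sum_i(1-\tfrac1{e_i})b_i$, I first record that $K_{\Xc}C_0=0$ is exactly $\beta=2\alpha$, and then compute the two intersection numbers $K_{\Xc}F=\alpha-2$ and $K_{\Xc}^2=2(\alpha-2)^2$ on $\Fb_2$ (using $C_0^2=-2$, $C_0F=1$, $F^2=0$ and $K_{\Fb_2}\equiv-2C_0-4F$). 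Since $\Xc$ is almost del Pezzo, $-K_{\Xc}$ is nef and big (Definition \ref{almost del pezzo order}), so $K_{\Xc}F\le 0$ forces $\alpha\le 2$ while $K_{\Xc}^2>0$ forces $\alpha\neq 2$; hence $\alpha<2$. Because every $1-\tfrac1{e_i}\ge\tfrac12$, this gives $\tfrac12 a\le\alpha<2$, so $a\le 3$.

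Next I would produce the lower bound $a\ge 2$ from the genus constraint. By Proposition \ref{genus for p-max} the $p$-maximal part $D_p$ has $p_a(D_p)\ge 1$, and Equation \ref{genus} (with $n=2$) reads $(a_p-1)(b_p-a_p-1)\ge 1$, which is impossible unless $a_p\ge 2$; hence $a\ge a_p\ge 2$. Thus $a\in\{2,3\}$, and the two values will correspond to the two conclusions of the theorem. The value $a=3$ is the quick case: Remark \ref{ramification degree a=3} forces every ramification degree to equal $2$, whence $\alpha=\tfrac12\cdot 3=\tfrac32$ and $\beta=2\alpha=3=\tfrac12 b$, giving $b=6$ and $D\equiv 3C_0+6F$.

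The substance of the argument is the case $a=2$, where I want to conclude $b=4$ and that all $e_i$ coincide. The clean idea is a sign analysis of Equation \ref{F2 divisors}: each term is $(1-\tfrac1{e_i})(b_i-2a_i)=(1-\tfrac1{e_i})(D_i\cdot C_0)$, and for an irreducible component $D_i\neq C_0$ one has $D_i\cdot C_0\ge 0$. When $C_0$ is unramified every term is therefore nonnegative, so the vanishing of the sum forces $D_i\cdot C_0=0$ for all $i$; this rules out fibre components (as $F\cdot C_0=1$) and forces each section to be $\equiv C_0+2F$ and each bisection to be $\equiv 2C_0+4F$, so that $a=2$ yields $D\equiv 2C_0+4F$. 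Equality of the degrees then follows from a per-prime version of the genus bound: for any prime $p$ dividing some $e_i$, $D_p$ must have $a_p\ge 2$, so a lone section can never be a $D_p$; hence for every $p$ the two section-type components share the maximal $p$-power, which forces their degrees to be equal.

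The hard part will be the remaining sub-case, where $C_0$ itself is ramified, since then the $C_0$-term $(1-\tfrac1{e_1})(-2)$ is negative and the naive sign argument no longer closes. Here I would invoke Proposition \ref{F_2 3 cases}: its case $1$ tells me that the cyclic cover of $C_0$ is ramified at exactly two points, which translates into the partner section meeting $C_0$ in exactly two points, i.e. $D_2\equiv C_0+4F$; combined with the per-prime genus argument (a $D_p$ containing $C_0$ but not the partner section would be a genus-zero section, a contradiction) this pins down $e_1=e_2$, excludes extra fibres through Equation \ref{F2 divisors}, and again gives $D\equiv 2C_0+4F$ with equal degrees. I expect the bookkeeping of the secondary ramification indices in this sub-case, reconciling Proposition \ref{F_2 3 cases} with the sign and $p$-adic valuation analysis, to be where the real care is needed.
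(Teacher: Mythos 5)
Your proposal is correct and follows essentially the same route as the paper: both pivot on the identity $K_{\Xc}C_0=0$ (Equation \ref{F2 divisors}), dispose of $a=3$ via Remark \ref{ramification degree a=3}, and handle $a=2$ through Proposition \ref{F_2 3 cases} together with the per-prime genus constraint. The only substantive difference is that you explicitly justify $a\in\{2,3\}$ (via $K_{\Xc}F\le 0$, $K_{\Xc}^2=2(\alpha-2)^2>0$, and Equation \ref{genus}), a step the paper's proof leaves implicit; that addition is correct and welcome.
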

 
 \begin{proof}
 Let $D\equiv aC_0+bF=\cup (a_iC_0+b_iF),~\{e_i\}$ be a ramification configuration for a minimal AdPO over $\Fb_2$. 
If $a=3$,  by Remark \ref{ramification degree a=3} we know all the ramification degrees are $2$. So, by Equation \ref{F2 divisors} we get $b=6$.
If $a=2$, all ramified sections have the same ramification degrees and the ramification degrees of the ramified fibres are given by secondary ramification degrees, which by case 1 in Proposition \ref{F_2 3 cases} are equal to the ramification degree of $C_0$. So all the ramifications degrees are the same, and again, by Equation \ref{F2 divisors} we get $b=4$.

\end{proof}
Considering  minimal TAdPO classified in this section, one can observe the following corollary.
\begin{corollary}
Let $\Xc$ be a minimal TAdPO over $Z$. Then all the degrees are equal, say to $e$, and there is an effective divisor $M$ such that the ramification divisor $D -K_Z+M$. In particular, $K_{\Xc}=\frac{1}{e}(K_Z+(e-1)M)$.
\end{corollary}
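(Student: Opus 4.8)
The plan is to treat this as a bookkeeping corollary that simply assembles the case-by-case classifications obtained earlier in the section. By Theorem~\ref{minimal terminal almost del Pezzo order} a minimal TAdPO $\Xc$ has centre $Z\in\{\Pb^2,\Fb_0,\Fb_1,\Fb_2\}$, writing $\Fb_0=\PP$, and in each case the admissible ramification data $(D,\{e_i\})$ have been pinned down. The assertion that all ramification degrees share a common value $e$ is immediate: for $Z=\Pb^2$ the order is del Pezzo (Theorem~\ref{minimal terminal almost del Pezzo order}, case~1) and the equality of degrees is recorded in Table~\ref{table}; for $Z=\PP$ it is part of Proposition~\ref{centre of del pezzo on PP}; and for $Z=\Fb_1,\Fb_2$ it is the ``furthermore'' clause of Theorems~\ref{almost del pezzo on F1} and~\ref{almost del pezzo on F2}.

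Next I would exhibit the effective divisor $M$ with $D\sim -K_Z+M$, that is $M\sim D+K_Z$, case by case. For $Z=\Pb^2$ one has $-K_{\Pb^2}=3H$, so $M=(d-3)H$ with $d=\deg D\in\{3,4,5\}$ is effective. For $Z=\Fb_0$, where $-K_Z=2C_0+2F$, the bound $2C_0+2F\le[D]\le 3C_0+3F$ from Proposition~\ref{centre of del pezzo on PP} makes $M=D+K_Z$ effective. For $Z=\Fb_1$, where $-K_Z=2C_0+3F$, the two cases $D\equiv 2C_0+4F$ and $D\equiv 3C_0+5F$ of Theorem~\ref{almost del pezzo on F1} give $M\equiv F$ and $M\equiv C_0+2F$, both effective. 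For $Z=\Fb_2$, where $-K_Z=2C_0+4F$, the cases $D\equiv 2C_0+4F$ and $D\equiv 3C_0+6F$ of Theorem~\ref{almost del pezzo on F2} give $M=0$ and $M\equiv C_0+2F$, again effective.

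Finally I would deduce the formula for $K_{\Xc}$ by one uniform computation, valid once the two preceding points are established. Since every degree equals $e$ we have $\Delta=\left(1-\tfrac{1}{e}\right)D$, and substituting $D\sim -K_Z+M$ yields
\begin{align*}
K_{\Xc}=K_Z+\Delta&=K_Z+\left(1-\tfrac{1}{e}\right)(-K_Z+M)\\
&=\tfrac{1}{e}K_Z+\tfrac{e-1}{e}M=\tfrac{1}{e}\bigl(K_Z+(e-1)M\bigr),
\end{align*}
as claimed. I do not expect a genuine obstacle here: the statement is a summary of the classifications rather than a new argument, and the only step carrying any content is the effectivity of $M$ in the ruled cases, which is immediate once the ramification classes of Theorems~\ref{almost del pezzo on F1} and~\ref{almost del pezzo on F2} are in hand.
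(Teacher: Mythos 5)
Your proposal is correct and follows exactly the route the paper intends: the paper offers no written proof, merely noting that the corollary is observed from the classifications of the section, and your case-by-case check over $Z=\Pb^2,\PP,\Fb_1,\Fb_2$ together with the computation $K_{\Xc}=K_Z+\left(1-\tfrac{1}{e}\right)(-K_Z+M)=\tfrac{1}{e}\left(K_Z+(e-1)M\right)$ is precisely that observation made explicit. The effectivity of $M$ in each case and the equality of the ramification degrees are correctly sourced from Table~\ref{table}, Proposition~\ref{centre of del pezzo on PP}, and Theorems~\ref{almost del pezzo on F1} and~\ref{almost del pezzo on F2}.
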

%%%%%%%%%%%%%%%%%%%%%%%%%%%%%%%%%%%%%%%%%%%%%%%%%%%

\section{Classification of CdPOs}\label{Classification of CdPOs}
 In this section we will give the classification of all CdPOs. Let $\Xc$ be a canonical order. In the previous section, we showed that there exists  the following diagram
 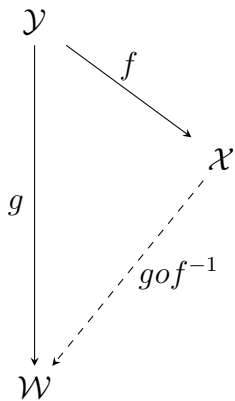
\begin{figure}[H]
 \begin{center}
 \begin{tikzpicture}
  \matrix (m) [matrix of math nodes,row sep=3em,column sep=4em,minimum width=2em]
  {
     \Yc &  \\
      & \Xc \\
      & \\
      \Wc& \\};
  \path[-stealth]
    (m-1-1) edge  node [above] {$f$} (m-2-2)
            edge node [left] {$g$} (m-4-1)
   (m-2-2) edge [dashed] node [right] {$gof^{-1}$} (m-4-1);
    \end{tikzpicture}
 \caption{Resolution of canonical orders to the minimal model}
 \label{resolution diagram for orders}
\end{center}
\end{figure}
\noindent where $f:\Yc\rightarrow \Xc$ is the unique minimal resolution for $\Xc$; meaning that $\Xc$ is resolved to a terminal order $\Yc$ over a smooth surface $Z_{\Yc}$, and $\Wc$ is a minimal terminal order over $Z_{\Wc}=\Pb^2, \Pb^1\times\Pb^1, \Fb_1$, or $\Fb_2$. Further, if $\Xc$ is a del Pezzo order, then $\Wc$ is almost del Pezzo, however $W$ is actually del Pezzo when $Z_{\Wc}=\Pb^2$ .  So we can blow up a minimal terminal (almost) del Pezzo order $\Wc$ over $Z$ to classify CdPOs. Once the order $\Wc$ is blown up by $f:\Yc\rightarrow \Wc$, then $\Yc$ should be an almost del Pezzo order. Moreover, we need to have a $K_{\Yc}$-zero curve in order to do the contraction  $f:\Yc\rightarrow \Xc$ where $\Xc$ is a canonical del Pezzo order.

For the special case where the ramification is anti-canonical we have the following result in order to find $k_{\Yc}$-zero curves.
\begin{proposition}\label{Z iff X a}
Let $\Wc$ be a minimal terminal order over $Z_{\Wc}$ with anti-canonical ramification divisors. Then  $\Wc$ is almost del Pezzo if and only if $Z_{\Wc}$ is. Furthermore, with above setting in Figure  \ref{resolution diagram for orders},   if $\Yc$ is obtained from blowups of $\Wc$ at points of the ramification divisor, $K_{\Yc}$-zero curves are preciesely the $(-2)$-curves on $Z_{\Yc}$.
\end{proposition}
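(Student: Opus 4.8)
The plan is to reduce everything to the identity $K_{\Wc}=\frac{1}{e}K_{Z_{\Wc}}$ supplied by Proposition \ref{Z iff X}, which holds precisely because the ramification is anti-canonical with all degrees equal to some common value $e$. For the first assertion I would transport the two defining inequalities of ``almost del Pezzo'' (Definition \ref{almost del pezzo order}) across this proportionality. Since $K_{\Wc}^2=\frac{1}{e^2}K_{Z_{\Wc}}^2$, positivity of one self-intersection is equivalent to positivity of the other; and since $-K_{\Wc}\cdot C=\frac{1}{e}(-K_{Z_{\Wc}})\cdot C$ for every effective curve $C$, the nef condition $-K_{\Wc}\cdot C\geq 0$ holds for all $C$ exactly when $-K_{Z_{\Wc}}\cdot C\geq 0$ does. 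Hence $\Wc$ is almost del Pezzo if and only if $Z_{\Wc}$ is.

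For the second assertion, I would first observe that because we blow up only points lying on the ramification divisor, the ``furthermore'' clause of Proposition \ref{Z iff X} (equivalently Proposition \ref{D=Z+M} with $M=0$) keeps the ramification of $\Yc$ anti-canonical, so the same proportionality persists on $\Yc$: namely $K_{\Yc}=\frac{1}{e}K_{Z_{\Yc}}$. Consequently, for any irreducible curve $E\subset Z_{\Yc}$ one has $K_{\Yc}\cdot E=0$ if and only if $K_{Z_{\Yc}}\cdot E=0$, and the statement is reduced to a purely commutative claim on the smooth surface $Z_{\Yc}$: the irreducible curves orthogonal to $K_{Z_{\Yc}}$ are exactly the $(-2)$-curves.

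To prove that reduced statement I would use that $Z_{\Yc}$ is a weak (almost) del Pezzo surface. Indeed $\Yc$ is almost del Pezzo by Corollary \ref{minimal resolution of canonical orders}, so $Z_{\Yc}$ is almost del Pezzo by Theorem \ref{almost del pezzo centre}; equivalently, from $K_{\Yc}=\frac{1}{e}K_{Z_{\Yc}}$ one reads off directly that $(-K_{Z_{\Yc}})^2=e^2K_{\Yc}^2>0$ and that $-K_{Z_{\Yc}}$ is nef. Given an irreducible curve $E$ with $K_{Z_{\Yc}}\cdot E=0$, the Hodge index theorem applied to $-K_{Z_{\Yc}}$ (a class of positive self-intersection) forces $E^2<0$, since $E$ is a nonzero effective class orthogonal to a class of positive square. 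Adjunction then yields $2p_a(E)-2=E^2+K_{Z_{\Yc}}\cdot E=E^2<0$, whence $p_a(E)=0$ and $E^2=-2$, so $E$ is a smooth rational $(-2)$-curve. Conversely, if $E$ is a $(-2)$-curve then adjunction gives $K_{Z_{\Yc}}\cdot E=-2-E^2=0$, and the proportionality returns $K_{\Yc}\cdot E=0$.

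The computations here are all routine; the one step carrying the real content is the Hodge-index argument pinning $E^2=-2$, whose only prerequisite is that the anti-canonical class of $Z_{\Yc}$ be nef and big. That is exactly what the first part of the proposition secures (together with Theorem \ref{almost del pezzo centre}), so the main care lies in checking the hypotheses of the Hodge index theorem rather than in any delicate estimate.
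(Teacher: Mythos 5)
Your proof is correct, and for the second assertion it takes a genuinely different route from the paper. The first half coincides with the paper's (both transport the two almost-del-Pezzo inequalities across the proportionality $K_{\Wc}=\frac{1}{e}K_{Z_{\Wc}}$). For the ``furthermore'' clause the paper invokes \cite[Proposition 6.2]{chan2008canonical} to know that the $f$-exceptional curves have genus zero, and then reads off $C^2=-2$ from the genus formula $-2=eK_{\Yc}C+C^2$ together with $K_{\Yc}C=0$; you instead prove rationality internally, applying the Hodge index theorem to the nef and big class $-K_{Z_{\Yc}}$ to force $E^2<0$ for any irreducible $E$ orthogonal to it, and then pinning $E^2=-2$, $p_a(E)=0$ by adjunction and parity. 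Your route is more self-contained --- its only input is that $Z_{\Yc}$ is a weak del Pezzo surface, which the proportionality (or Theorem \ref{almost del pezzo centre}) already supplies --- and it cleanly establishes both inclusions behind the word ``precisely,'' whereas the paper's argument as written only treats the $f$-exceptional curves and leaves the converse (every $(-2)$-curve is $K_{\Yc}$-zero) to the same adjunction computation run backwards. What the paper's citation buys is the genus-zero statement in the general canonical-order setting; what your Hodge-index argument buys is independence from that external result, at no extra cost in this situation. The one hypothesis you should make explicit, as Proposition \ref{Z iff X} does, is that all ramification degrees are equal to a common $e$, since the identity $K_{\Wc}=\frac{1}{e}K_{Z_{\Wc}}$ on which everything rests requires it.
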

\begin{proof}
The first part of the proposition can be seen like Proposition \ref{Z iff X}. Now let $f:\Yc\rightarrow \Xc$ be the minimal resolution of $\Xc$ and $g:\Yc\rightarrow\Wc$ represent blowups some points of the anti-canonical ramification divisor of $\Wc$. Also, let $C$ be an $f$-exceptional curve. Then by  \cite[Proposition 6.2]{chan2008canonical} the genus of $C$, $g_C$ is zero. Therefore, by genus formula, we have $-2=K_{Z_{\Yc}}C+C^2=eK_{\Yc}C+C^2$, which proves the last part of the result. Note that $K_{Z_{\Yc}}=eK_{\Yc}$ as the blowups are at points of the ramification locus.
\end{proof}
In Proposition \ref{Y dP M}  criteria are given to check if an order is del Pezzo or not. We can generalize the result for AdPOs as follows. It also gives a good description of $K_{\Yc}$-zero curves, which will be used in this Section to classify CdPOs.
\begin{proposition}\label{Y adP M}
If $\Yc$ is almost del Pezzo, then the following condition holds on the blowup $f:Z'\rightarrow Z$ at points $\Sigma\subset D$. Let $C\subset Z'$ be an effective curve. Then the multiplicity $m=mult_{\Sigma}f_*C\leq2-(e-1)M.f_*C+f_*C^2$. Also, $\Sigma$ contains no point on a $(-2)$-curve. Further,  $K_{\Yc}$-zero curves are precisely the curves $C$ such that $f_*C$  have exactly $2-(e-1)M.f_*C+f_*C^2$ blown up points.
\end{proposition}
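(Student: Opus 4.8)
The plan is to run the same genus-formula computation as in Proposition \ref{Y dP M}, replacing ampleness of $-K_{\Yc}$ by nefness; this turns the strict inequality there into a non-strict one and simultaneously identifies the $K_{\Yc}$-zero curves with its equality case. By Remark \ref{effective cone} it is enough to test effective curves on the generators of the effective cone of $Z'$, which are smooth rational, so that adjunction reads $K_{Z'}.C=-2-C^2$. Combining this with Proposition \ref{D=Z+M} (so that $K_{\Yc}=\frac{1}{e}(K_{Z'}+(e-1)f^*M)$), the projection formula $f^*M.C=M.f_*C$, and $C^2=f_*C^2-m$ gives, for such a curve $C\subset Z'$,
\[
eK_{\Yc}.C=K_{Z'}.C+(e-1)f^*M.C=-2-f_*C^2+m+(e-1)M.f_*C,
\]
where $m=mult_{\Sigma}f_*C$. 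Since $\Yc$ is almost del Pezzo, $-K_{\Yc}$ is nef by Definition \ref{almost del pezzo order}, so $K_{\Yc}.C\leq 0$; as $e>0$ this gives $eK_{\Yc}.C\leq 0$, and rearranging yields exactly $m\leq 2-(e-1)M.f_*C+f_*C^2$. Because $e>0$, the equality $K_{\Yc}.C=0$ is equivalent to equality in this bound, i.e. to $f_*C$ meeting $\Sigma$ in precisely $2-(e-1)M.f_*C+f_*C^2$ points, which is the asserted description of the $K_{\Yc}$-zero curves.

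For the assertion that $\Sigma$ meets no $(-2)$-curve I would argue by contradiction. Suppose some $p\in\Sigma$ lies on a $(-2)$-curve $C'$ of the minimal centre $Z$, i.e. a smooth rational curve with $(C')^2=-2$, so that $K_Z.C'=0$ by adjunction. In the setting of this section the order $\Wc$ over $Z$ is a minimal almost del Pezzo order, so by Definition \ref{minimal terminal order} minimality forces $K_{\Wc}.C'\geq 0$ (as $(C')^2<0$) while almost del Pezzoness forces $K_{\Wc}.C'\leq 0$; hence $K_{\Wc}.C'=0$, and since $eK_{\Wc}.C'=K_Z.C'+(e-1)M.C'=(e-1)M.C'$ this gives $(e-1)M.C'=0$. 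Taking $C$ to be the strict transform of $C'$ on $Z'$, so that $f_*C=C'$ and $m=mult_{\Sigma}C'\geq 1$, the identity above specialises to
\[
eK_{\Yc}.C=-2-(C')^2+m+(e-1)M.C'=m+(e-1)M.C'=m\geq 1,
\]
so $K_{\Yc}.C>0$, contradicting the nefness of $-K_{\Yc}$. Therefore no point of $\Sigma$ can lie on a $(-2)$-curve.

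The step I expect to be the main obstacle is this last one, and specifically its extension to iterated blowups with infinitely near points. When $\Sigma$ contains infinitely near points the relevant $(-2)$-curves need not be visible on $Z$ itself but appear only at intermediate stages of the tower $f:Z'\to Z$, so the contradiction argument must be carried out blowup by blowup, each time re-deriving the $K$-triviality of the negative curve in question from minimality and almost del Pezzoness before applying the genus-formula identity. A secondary technical point is the bookkeeping in $C^2=f_*C^2-m$: when a point of $\Sigma$ is blown up with multiplicity greater than one (for instance at the node of a nodal cubic), one must check that the correct multiplicity is recorded so that the identity, and hence both the inequality and the equality characterisation, remain valid.
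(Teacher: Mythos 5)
Your proof follows the paper's approach exactly: the paper's own proof is the one-line remark that one replaces the strict inequalities of Proposition \ref{Y dP M} by non-strict ones, which is precisely your genus-formula computation with nefness of $-K_{\Yc}$ in place of ampleness, together with reading off the equality case. Your additional contradiction argument ruling out points of $\Sigma$ on $(-2)$-curves (including its extension to $(-2)$-exceptional curves at intermediate stages) is correct and supplies a detail the paper leaves implicit.
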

\begin{proof}
We only need to replace strict inequalities in Proposition \ref{Y dP M} by inequalities.
\end{proof}

\subsection{CdPOs Obtained from TdPOs over $\Pb^2$}\label{Canonical Orders on P2}

Let $D$ be the ramification divisors over $\Pb^2$ corresponding to a TdPO $\Wc$. Then $3\leq \deg(D)\leq 5$. We firstly let $D$ be of degree $3$. So all the ramification degrees are the same, say $e$. 

Let $\Wc$ be a TdPO over $\Pb^2$ and let $f:\Yc\rightarrow \Wc$ be a blowup at a point $p$ not in $D$, then  $\Yc$ is almost del Pezzo if $e=2$; however it actually remains del Pezzo, so there is no $K_{\Yc}$-zero curve. So Let  $f:\Yc\rightarrow \Wc$ denote a blowup at a point $p$ out of $D$  with the exceptional $E_p$ and then blowing up a point $q\in E_p$ with the exceptional $E_q$. Then $E_p^2=-2$ and $(E_q)^2=-1$. We have $K_{\Yc}^2=K_{\Wc}^2-2=\frac{1}{4}$ and further,
\[
K_{\Yc}\equiv f^*(K_{\Wc})+2E_q+E_q.
\]
Let $C=C_0+aE_p+bE_q$ be an effective curve in $Z_{\Yc}$ where neither $C_0-E_p$ nor $C_0-E_q$ is effective. Then
\begin{align*}
K_{\Yc}D&=(f^*(K_{\Wc})+2E_{q}+E_p)C\\
&=f^*(K_{\Wc})C+(2E_{q}+E_p)C\\
&=K_{\Wc}f_*C+(2E_{q}+E_p)C\\
&=\dfrac{-3}{2}d+(2E_{q}+E_p)(C_0+aE_p+bE_{q})\\
&=\dfrac{-3}{2}d+(2E_{q}+E_p)C_0+(2E_{q}+E_p)(aE_p+bE_{q})\\
&\leq\dfrac{-3}{2}d+d-b\\
&\leq 0.
\end{align*}

Note that the only case that $(K_{\Yc}+\Delta_{\Yc})C=0$ is when $C=aE_p$ for any positive integer $a$.

If we blow up one more point, then $K^2\leq 0$, so the order would not be (almost) del Pezzo.  

Now let $f:\Yc\rightarrow \Wc$ refer to blowing up two points  $p\notin D$ and $q\in D$. Then
\begin{align*}
K_{\Yc}&\equiv f^*(K_{\Wc})+E_p+\frac{1}{2}E_q;\\
K_{\Yc}^2&=\frac{9}{4}-1-\frac{1}{4}>0;\\
K_{\Yc}D&=(f^*(K_{\Wc})+E_p+\frac{1}{2}E_q)D\\
&=-\frac{3d}{2}+E_pC+\frac{1}{2}E_qC-a-\frac{b}{2}\\
&\leq -a-\frac{b}{2},
\end{align*}
where $D\equiv C+aE_p+bE_q$ and $d=\deg(f_*C)$. In order for the equality to hold, $a$ and $b$ should be $zero$ and multiplicity of $C$ at both $p$ and $q$ should be $d$. Therefore, $D:=\tilde{\ell}$ where $\ell$ is the line going through $p$ and $q$.

\begin{definition}\label{almost general position P2}
Let $f:Z\rightarrow \Pb^2$ be a sequence of blowups at points $\Sigma=\{p_1, \cdots, p_n\}$ in order and let  $E_1, \cdots, E_n$  be the corresponding exceptional curves, $1\leq i\leq n$. Note that the points are not in the same surfaces; however, they are all in blowups of $\Pb^2$ and we allow infinitely near points.  The set of points $p_1, \cdots, p_n$ is in almost general position if:

 \begin{enumerate}
  \item No four points (counting the multiplicities) are on a line.
  \item No seven points (counting the multiplicities) are on a conic.
  \item No point of a $(-2)$-exceptional curve is blown up.
 \end{enumerate}
\end{definition}

\begin{theorem}\label{canonical del Pezzo orders by P2}
 Let $\Wc$ be a minimal TAdPO over $\Pb^2$ with ramification divisor $D$ of degree $3$. Also let $g:\Yc\rightarrow \Wc$ represent the blowups of $\Wc$. Then, each of the following cases gives a contractible  $K_{\Yc}$-zero curve $E$ such that if $f:\Yc\rightarrow\Xc$ contracts $E$, then $\Xc$ is a CdPO. These actually classify all $k_{\Yc}$-zero curves.
 \begin{enumerate}
\item
  Blowing up a point $p\in D$ twice, to get the exceptional curves $E_1$ and $E_2$, where $E_1^2=-2$ and $E_2^2=-1$; $E:=E_1$.
  \item
  Blowing up  $3$ points in $D$, counting multiplicities, where all the points belong to  a line $l$; $E:=l$.
 \item
  Blowing up  $6$ points in $D$, counting multiplicities, where all the points belong to  a conic $C$; $E:=C$.
  \item 
  Blowing up  points $p\in D$ and $q\notin D$ where $e=2$; $E$ is the line going through $p$ and $q$.
  \end{enumerate}
\end{theorem}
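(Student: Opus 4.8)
The plan is to exploit that a degree-$3$ ramification divisor on $\Pb^2$ is anti-canonical, $D\sim 3H\sim -K_{\Pb^2}$, so in the notation of Proposition \ref{Y adP M} we have $M=0$ and $K_\Wc=\tfrac1e K_{\Pb^2}$. I would split the analysis into the three configurations in which every blown-up point lies on the ramification locus (cases 1--3) and the single ``mixed'' configuration in which one point lies off $D$ (case 4), since these are governed by genuinely different mechanisms.

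For the blowups lying entirely in $D$, the ramification stays anti-canonical after each blowup (Proposition \ref{Z iff X}), so Proposition \ref{Z iff X a} applies: $\Yc$ is almost del Pezzo exactly when $Z_\Yc$ is weak del Pezzo, and the $K_\Yc$-zero curves are precisely the $(-2)$-curves on $Z_\Yc$. I would then enumerate these. By Remark \ref{effective cone} it suffices to test generators of the effective cone, so with $M=0$ the equality case of Proposition \ref{Y adP M}, namely $\mathrm{mult}_\Sigma f_*C = 2 + (f_*C)^2$, need only be solved for $f_*C$ a line or a conic, together with the exceptional classes arising from infinitely near points. A line ($(f_*C)^2=1$) forces exactly $3$ points on it (case 2); a conic ($(f_*C)^2=4$) forces exactly $6$ points on it (case 3); an infinitely near pair $q\in E_p$ makes the strict transform $E_1$ of the first exceptional curve a $(-2)$-curve (case 1). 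The corresponding upper bounds---no four points on a line, no seven on a conic, no point of a $(-2)$-curve blown up---are exactly the almost general position conditions of Definition \ref{almost general position P2}, which is what keeps $\Yc$ almost del Pezzo.

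For the mixed case I would run the direct intersection computation already indicated before Definition \ref{almost general position P2}: with $e=2$, $p\in D$ and $q\notin D$ one has $K_\Yc\equiv f^*K_\Wc+\tfrac12 E_p+E_q$ and $K_\Yc^2=\tfrac94-\tfrac14-1=1>0$. Writing $\tilde\ell=H-E_p-E_q$ for the strict transform of the line through $p$ and $q$ gives $K_\Yc\cdot\tilde\ell=-\tfrac32+\tfrac12+1=0$ with $\tilde\ell^2=-1$, so $\tilde\ell$ is a $K_\Yc$-zero curve; the estimate displayed there shows any other effective class meets $K_\Yc$ strictly negatively unless it is exactly $\tilde\ell$, giving uniqueness. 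This is case 4, and here the zero curve is a $(-1)$-curve on the centre rather than a $(-2)$-curve, which is precisely why it falls outside the scope of Proposition \ref{Z iff X a}.

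Finally I would check that each listed $E$ is crepant-contractible and that the contraction is del Pezzo. Since $f$ contracts $E$ to a point, $f^*K_\Xc\cdot E=0$, and writing $K_\Yc\equiv f^*K_\Xc+aE$ and intersecting with $E$ gives $aE^2=K_\Yc\cdot E=0$; as $E^2\in\{-1,-2\}$ this forces $a=0$, so $K_\Yc=f^*K_\Xc$ and $\Xc$ has canonical singularities. Because $\Yc$ is almost del Pezzo, $-K_\Yc$ is nef and big and vanishes exactly on the contracted curve, so $-K_\Xc$ is ample and $\Xc$ is a CdPO. I expect the completeness assertion to be the main obstacle: making the enumeration of $K_\Yc$-zero curves exhaustive requires care with infinitely near points and with curves having higher multiplicity at a blown-up point (where the simple relation $(f_*C)^2=C^2+\mathrm{mult}_\Sigma f_*C$ breaks down and the full genus formula is needed), and one must also confirm that no configuration off $D$ beyond case 4 yields an extra zero curve while preserving $K_\Yc^2>0$.
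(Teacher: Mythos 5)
Your proposal is correct and follows essentially the same route as the paper: cases 1--3 are handled by reducing to Proposition \ref{Z iff X a} (anti-canonical ramification, so $K_{\Yc}$-zero curves are the $(-2)$-curves), and case 4 by the direct intersection computation of Proposition \ref{blowup 2 points out of D}. You in fact supply more than the paper's two-line proof does --- the crepancy check $K_{\Yc}=f^*K_{\Xc}$ and the honest flag that the completeness claim needs the full genus formula for curves with multiplicity greater than one at a blown-up point (e.g.\ a cubic nodal at one of eight points, which appears in the introduction's version of this theorem but not in the four cases listed here).
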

\begin{proof}
The first three cases are clear by Proposition \ref{Z iff X a}. For the last case see Proposition \ref{blowup 2 points out of D}.
\end{proof}

\begin{theorem}
 Let $\Wc$ be a minimal TAdPO over $\Pb^2$. Let $D$ be the  ramification divisor of degree $4$. Let $g:\Yc\rightarrow \Wc$ represent blowups of $\Wc$ at points $\Sigma=\{p_1, \cdots, p_n\}$. Then $\Yc$ is almost del Pezzo if and only if $\Sigma\subset D$, $n\leq 3$ and no three points of $\Sigma$ are collinear.
\end{theorem}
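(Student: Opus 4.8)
The plan is to use that $D\sim 4H=-K_{\Pb^2}+H$, so that the effective divisor of Proposition \ref{D=Z+M} is a single line, $M=H$, and to reduce at once to ramification degree $e=2$. Indeed all the $e_i$ are equal (Table \ref{table}), so at every point of $D$ the coefficient of Lemma \ref{coefficient of E} satisfies $a\geq 1/e$, while outside $D$ it equals $1$; since Equation \ref{self intersect P^2} gives $K_{\Wc}^2=1/9$ when $e=3$, Proposition \ref{decrease of K_X^2} forces $K_{\Yc}^2\leq 1/9-1/9=0$ after even one blowup, so no nontrivial blowup of an $e=3$ order is almost del Pezzo and we may assume $e=2$, $K_{\Wc}^2=1$. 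Then Proposition \ref{D=Z+M} gives $2K_{\Yc}\equiv -2H+\sum_{i=1}^nE_i$ and hence $K_{\Yc}^2=\tfrac14(4-n)=1-\tfrac n4$.

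For the forward implication, suppose $\Yc$ is almost del Pezzo. If some $p_i\notin D$ its exceptional curve enters $K_{\Yc}$ with coefficient $a=1$ (Lemma \ref{coefficient of E}), so Proposition \ref{decrease of K_X^2} gives $K_{\Yc}^2\leq K_{\Wc}^2-1=0$, contradicting $K_{\Yc}^2>0$; thus $\Sigma\subset D$ and every coefficient is $\tfrac12$. Then $K_{\Yc}^2=1-\tfrac n4>0$ forces $n\leq 3$. Finally, were three points of $\Sigma$ collinear on a line $\ell$, the strict transform $\widetilde\ell=H-E_{i_1}-E_{i_2}-E_{i_3}$ would be an effective curve with $2K_{\Yc}.\widetilde\ell=(-2H+\sum E_i).(H-E_{i_1}-E_{i_2}-E_{i_3})=-2+3=1>0$, contradicting the nefness of $-K_{\Yc}$; so no three points are collinear. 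This is precisely the line case of the multiplicity bound in Proposition \ref{Y adP M}, namely $\operatorname{mult}_\Sigma\ell\leq 2-H.\ell+\ell^2=2$.

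For the converse, assume $\Sigma\subset D$, $n\leq 3$, and no three points collinear. Then $K_{\Yc}^2=1-\tfrac n4\geq\tfrac14>0$, so $-K_{\Yc}$ is big, and by Proposition \ref{Y adP M} together with Remark \ref{effective cone} it suffices to verify $\operatorname{mult}_\Sigma f_*C\leq 2-H.f_*C+(f_*C)^2$ for the generators $C$ of the effective cone of $Z'$. For $n\leq 3$ these generators are the exceptional $(-1)$- and $(-2)$-classes together with the strict transforms of lines, whose images $f_*C$ are either a point or a line; the point case is vacuous, and the line case reads $\operatorname{mult}_\Sigma\ell\leq 2$, which is exactly the no-three-collinear hypothesis (a conic would only impose $\operatorname{mult}_\Sigma\leq 4$, automatic since $n\leq 3$). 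Hence $-K_{\Yc}$ is nef and big and $\Yc$ is almost del Pezzo.

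The delicate point, which I would treat carefully, is the bookkeeping when $\Sigma$ contains infinitely near points. One must check that an infinitely near point blown up on the strict transform of $D$ still has coefficient $\tfrac12$ (so that $\Sigma\subset D$ and the formula $2K_{\Yc}\equiv -2H+\sum E_i$ persist), and that the $(-2)$-curves thereby produced, namely the classes $E_i-E_j$, satisfy $2K_{\Yc}.(E_i-E_j)=0$ and are therefore harmless for nefness. The crux is then the claim that for $n\leq 3$ the only effective generator that can make $K_{\Yc}$ positive is the trisecant class $H-E_{i_1}-E_{i_2}-E_{i_3}$ excluded by hypothesis; this rests on the degree $\geq 6$ del Pezzo effective-cone description invoked in Remark \ref{effective cone}, and is where I expect the real work to lie.
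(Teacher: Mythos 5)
Your proposal is correct and follows essentially the same route as the paper: rule out $e=3$, points off $D$, and $n\geq 4$ by the self-intersection computation $K_{\Yc}^2=K_{\Wc}^2-\sum a_i^2$ (the paper delegates the first two to Theorem \ref{blowups degree 4}, whose proof is the same calculation you redo inline), and then obtain the no-three-collinear condition from the multiplicity bound of Proposition \ref{Y adP M} checked on the generators of the effective cone via Remark \ref{effective cone}. Your write-up is somewhat more explicit than the paper's (e.g.\ the direct verification that a trisecant gives $2K_{\Yc}.\widetilde{\ell}=1>0$ and the remarks on infinitely near points), but the underlying argument is the same.
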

\begin{proof}
By Theorem \ref{blowups degree 4} we know all the blowups should be at points in $D$ and also $e=2$. Let  $D\sim-K_{\Pb^2}+H$ be the ramification divisors in $\Pb^2$ for a line $H$. Equations \ref{self intersect P^2} shows $(K_{\Pb^2}+\Delta_{\Pb^2})^2=1$.
 So there are only $3$ blowups allowed. Now let $f:Z_{\Yc}\rightarrow \Pb^2$ be blowups at $n$ points, $n\leq3$, with exceptional curves $E_i$, $1\leq i\leq n$. By Remark \ref{effective cone} we see that the effective cone is generated by exceptional curves and $\widetilde{H}$ for some line $H\in \Pb^2$. Since no three point are collinear
\[
mult_{\Sigma}H\leq2=2-H^2+H^2
\]
and for any point $p\in\Sigma$,
\[
mult_{\Sigma}p\leq2-H.p+p^2.
\]
\end{proof}

The last case to check for the orders on the projective plane is the orders with ramification divisors of degree $5$. But looking at calculations in the proof of Theorem \ref{blowups degree 5}, we see that the first blowup $g:\Yc\rightarrow \Wc$ results in $K_{\Yc}^2\leq 0$. So the order $\Yc$ can not be almost del Pezzo.

\subsection{CdPOs Obtained from TdPOs over Rational Ruled Surfaces }
\subsubsection*{The case $Z=\PP$}
Let $\Wc$ be a minimal TAdPO over $\PP$ but we see that in this case $\Wc$ is actually del Pezzo. Let $D$ denote the ramification divisors corresponding to $\Wc$ and let $C_0$ and $F$ denote two perpendicular fixed fibres. Then $2C_0+2F\leq [D]\leq3C_0+3F$ and $\Delta=\left(1-\dfrac{1}{e}\right)D$. We know that $e=2$ if $2C_0+2F< [D]$.  Recall that the canonical divisor of $\Wc$ is as the following.
\begin{align*}
K_{\Wc}=K_{\PP}+\Delta_{\PP}&\equiv (-2C_0-2F)+\left(1-\dfrac{1}{e}\right)(aC_0+bF),\\
&=\left(a-2-\dfrac{a}{e}\right)C_0+\left(b-2-\dfrac{b}{e}\right)F,
\end{align*}
for suitable $a$ and $b$.
Recalling the diagram in Figure \ref{resolution diagram for orders}, we seek to find a $K_{\Yc}$-zero curve $E\in Z_{\Yc}$. This lets us blow down $\Yc$ to a CdPO $\Xc$ by contracting $E$. Before that, we need the following definition.

\begin{definition}\label{almost general position P1}
Let $\Sigma$ be a set of points in $\PP$. The points of $\Sigma$ are in almost general position if $|\Sigma|<8$ and any irreducible curve of the form $aC_0+bF$ contains no more than  $2(a+b)$ of the points. If $C$ is an irreducible $(a,b)$-curve, we say $C$ is in $\Sigma$-almost general position if it contains exactly $2(a+b)$ points of $\Sigma$
\end{definition}

\begin{theorem}\label{canonical del Pezzo orders by P11}
Let $\Wc$ be a minimal TAdPO over $\PP$ with ramification divisors $D$ and ramification degree $e$. Let $g:\Yc\rightarrow\Wc$ be a sequence of blowups at points $\Sigma=\{p_1, \cdots, p_n\}$. Then each of the followings gives a contractible $K_{\Yc}$-zero curve $E$ such that if $f:\Yc\rightarrow\Xc$ contracts $E$, then $\Xc$ is a CdPO. And this actually classifies all CdPOs.
\begin{enumerate}
\item
$D\equiv2C_0+2F$, $e=2$, and $\Sigma=\{p\}$ is a single point, where $p\notin D$. Then $E$ is the proper transform of any fibre (in any direction) passing through $p$.
\item
$D\equiv3C_0+2F$, $e=2$, and $\Sigma=\{p\}$ is a single point, where $p\in D$. Then $E$ is the proper transform of  any fibre in $[F]$ passing through $p$.
\item
$D\equiv3C_0+3F$, $e=2$, and $\Sigma=\{p\}$ is a single point, where $p\in D$. Then $E$ is the proper transform  of any fibre (in any direction) passing through $p$.
\item
$D\equiv2C_0+2F$, $e$ is free, and $\Sigma\subset D$ is a set of points in almost general position. Then $E$ is the proper transform of the blowup of  any irreducible curve in  $\Sigma$-almost general position.
\end{enumerate}
Since in $\PP$ irreducible $(a,b)$-curves  and $(b,a)$-curves are isomorphic, we only worked with one case.

\end{theorem}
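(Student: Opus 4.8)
The plan is to fix the minimal model $\Wc$, which by Proposition \ref{centre of del pezzo on PP} has $[D]$ equal to one of $2C_0+2F$, $3C_0+2F$, or $3C_0+3F$ up to swapping the two rulings, and, for each class, to (i) decide when a blow-up $g:\Yc\to\Wc$ at $\Sigma$ keeps $\Yc$ almost del Pezzo, and (ii) read off the $K_{\Yc}$-zero curves. The construction direction is uniform: if $E$ is an irreducible $K_{\Yc}$-zero curve and $f:\Yc\to\Xc$ contracts it, then from $K_{\Yc}\cdot E=0$ the discrepancy vanishes, so $K_{\Yc}=f^*K_{\Xc}$ and $\Xc$ is canonical; since $-K_{\Yc}$ is nef and big and trivial exactly along the contracted curve, $-K_{\Xc}$ becomes ample and $\Xc$ is a CdPO. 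For the converse claim that this classifies all CdPOs I would invoke Figure \ref{resolution diagram for orders}: any CdPO has a minimal resolution $\Yc$ that is terminal almost del Pezzo by Corollary \ref{minimal resolution of canonical orders}, with minimal model $\Wc$ over $\PP$ one of the above classes, so every such $\Xc$ is recovered by contracting a $K_{\Yc}$-zero curve of such a $\Yc$.

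The anti-canonical class $D\equiv 2C_0+2F$ produces cases 1 and 4. For case 4 (where $\Sigma\subset D$) I would apply Proposition \ref{Z iff X a}: the ramification stays anti-canonical, $\Yc$ is almost del Pezzo precisely when $Z_{\Yc}$ is a weak del Pezzo surface, i.e. when $\Sigma$ is in almost general position, and the $K_{\Yc}$-zero curves are exactly the $(-2)$-curves of $Z_{\Yc}$. A direct computation $K_{Z_{\Yc}}\cdot\widetilde{C}=-2(a+b)+\sum_i m_i$ for the proper transform of an irreducible curve $f_*C\equiv aC_0+bF$ shows these are precisely the proper transforms of curves meeting $\Sigma$ in exactly $2(a+b)$ points, which is the definition of $\Sigma$-almost general position. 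Case 1 (where $p\notin D$) is genuinely different, since the ramification becomes $D'\sim -K_{Z_{\Yc}}+E_p$; here $K_{\Yc}^2=8/e^2-1$ forces $e=2$ and a second off-$D$ blow-up would give $K_{\Yc}^2\le 0$, and I would check directly that $K_{\Yc}\cdot\widetilde{\ell}=K_{\Wc}\cdot\ell+1=0$ for the proper transform of any fibre $\ell$ through $p$ (in either direction), a $(-1)$-curve contracting to the desired CdPO.

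For cases 2 and 3 the inequality $2C_0+2F<[D]$ forces $e=2$ by Proposition \ref{centre of del pezzo on PP}, and I would write $D\sim -K_{\PP}+M$ with $M=C_0$ respectively $M=C_0+F$. Blowing up a point $p\in D$ keeps $K_{\Yc}^2=K_{\Wc}^2-\tfrac14>0$ by Proposition \ref{decrease of K_X^2} and Equation \ref{self intersect P^1}, and by the $K_{\Yc}$-zero criterion in Proposition \ref{Y adP M} a fibre $\ell$ through $p$ is $K_{\Yc}$-zero exactly when it carries $2-(e-1)M\cdot\ell+\ell^2$ blown-up points, i.e. exactly one. Evaluating this count gives $1$ for an $[F]$-fibre but $2$ for a $[C_0]$-fibre when $M=C_0$ (since $M\cdot F=1$ while $M\cdot C_0=0$), which is exactly why only the $[F]$-direction survives in case 2; when $M=C_0+F$ both directions return the value $1$, giving case 3. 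The complementary direct computation $K_{\Yc}\cdot\widetilde{\ell}=K_{\Wc}\cdot\ell+\tfrac12$ confirms these values.

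The hardest part will be exhaustiveness together with the bookkeeping in case 4. One must verify that every admissible blow-up sequence is captured and that the $(-2)$-curves of $Z_{\Yc}$, correctly accounting for infinitely near points, multiplicities, and the constraint from Proposition \ref{Y adP M} that no point of an existing $(-2)$-curve be blown up, are exactly the proper transforms of irreducible curves in $\Sigma$-almost general position. The delicate numerical point is reconciling the ``$2(a+b)$ points'' count with the genus-zero form $m=2+f_*C^2-(e-1)M\cdot f_*C$ of Proposition \ref{Y adP M}: the two agree precisely on the rational classes $a=1$ or $b=1$, which is consistent because only a rational $f_*C$ can have a $(-2)$-curve as its proper transform. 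Finally, the reduction of $(a,b)$ to $(b,a)$ noted at the end of the statement follows from the ruling-swap automorphism of $\PP$.
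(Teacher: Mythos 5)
Your proposal is correct and follows essentially the same route as the paper: case 1 via the computation of Proposition \ref{blowup p notin PP} showing the transform of a fibre through $p$ is a $K_{\Yc}$-zero $(-1)$-curve, cases 2 and 3 via the multiplicity count of Proposition \ref{Y adP M} with $M=C_0$ and $M=C_0+F$ respectively, and case 4 via Proposition \ref{Z iff X a} after checking that the $\Sigma$-almost-general-position curves of bidegree $(1,0)$, $(1,1)$, $(2,1)$ become $(-2)$-curves. The extra detail you supply (the discrepancy-zero argument for why the contraction is canonical, the exhaustiveness via the resolution diagram, and the observation that the $2(a+b)$ count agrees with $2+2ab$ exactly when $a=1$ or $b=1$) is left implicit in the paper but is consistent with it.
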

\begin{proof}
For case \textit{1} see Proposition \ref{blowup p notin PP}. Cases \textit{2} and \textit{3} follow from Proposition \ref{Y adP M}. Case \textit{4} we only need to show that any curve in  $\Sigma$-almost general position is a $(-2)$-curve then we use Proposition \ref{Z iff X a}. Let $C$ be an irreducible $(a,b)$-curve containing $2(a+b)$ points of $\Sigma$. Then $a+b<4$ and $C$ only can have bi-degrees $(1,0)$, $(1,1)$, and $(2,1)$. It is easy to check that blowing up these curves at $2$, $4$, and $6$ points respectively give $(-2)$-curves.
\end{proof}

\subsubsection*{The cases $Z=\Fb_1$}
Let $\Wc$ be a TAdPO over the ruled  surface $\Fb_1$ with  ramification divisors  $D$. By Theorem \ref{almost del pezzo on F1},  $D\equiv 2C_0+4F$ or $D\equiv 3C_0+5F$ and the ramification degrees are all equall to $2$. Further we have the following equations for the canonical divisors.
\begin{align*}
 D\equiv 2C_0+4F:~~&K_{\Wc}=-2C_0-3F+\dfrac{1}{2}\left(2C_0+4F\right)=-(C_0+F)\\
 D\equiv 3C_0+5F:~~&K_{\Wc}=-2C_0-3F+\dfrac{1}{2}\left(3C_0+5F\right)=-\dfrac{1}{2}(C_0+F)		
\end{align*}

\begin{lemma}\label{non del pezzo f1 2}
Let $\Wc$ be a minimal TAdPO over $\Fb_1$ with ramification divisors $D\equiv 3C_0+5F$. Let $g:\Yc\rightarrow \Wc$ be any blowup of $\Wc$. Then Y is not almost del Pezzo.
\end{lemma}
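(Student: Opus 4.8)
The plan is to reduce the statement to a single self-intersection count, exploiting that $K^2$ can only decrease under blowups together with the fact that $K_{\Wc}^2$ is already too small to survive even one blowup. The key quantities are $K_{\Wc}^2$ and the coefficient $a$ attached to the exceptional curve.

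First I would record $K_{\Wc}^2$. From the displayed computation preceding the lemma, the hypothesis $D\equiv 3C_0+5F$ gives $K_{\Wc}\equiv-\tfrac12(C_0+F)$, so using the intersection numbers $C_0^2=-1$, $C_0\cdot F=1$, $F^2=0$ on $\Fb_1$ I obtain
\[
K_{\Wc}^2=\tfrac14(C_0+F)^2=\tfrac14(-1+2+0)=\tfrac14.
\]

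Next I would control the coefficient $a$ produced by a blowup. By Theorem \ref{almost del pezzo on F1} every ramification degree equals $2$, so $e=2$ throughout. Lemma \ref{coefficient of E} then forces $a=1$ when the blown-up point lies off $D$, and $a=\tfrac1e=\tfrac12$ at a smooth point of $D$; at a node of $D$ both crossing degrees equal $2$, so $n=1$ and $a=\tfrac1{ne}=\tfrac12$ as well. Hence $a\geq\tfrac12$ in every case. Applying Proposition \ref{decrease of K_X^2} to the first blowup then yields
\[
K_{\Yc}^2=K_{\Wc}^2-a^2=\tfrac14-a^2\leq 0,
\]
and any further blowup only subtracts more, keeping the self-intersection $\leq 0$. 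Since Definition \ref{almost del pezzo order} requires $K_{\Yc}^2>0$ for an almost del Pezzo order, $\Yc$ fails to be almost del Pezzo, which is the claim.

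There is no serious obstacle: the argument is a one-line inequality once the two inputs are in place. The only point deserving emphasis is \emph{why} the hypothesis $D\equiv 3C_0+5F$ is essential, and this is exactly the crux. For the companion case $D\equiv 2C_0+4F$ one instead finds $K_{\Wc}^2=(C_0+F)^2=1$, which leaves room to subtract $a^2\leq 1$ and still remain positive; it is precisely the smaller value $K_{\Wc}^2=\tfrac14$ here, combined with the uniform minimal ramification degree $e=2$ that pins $a\geq\tfrac12$, that leaves no room and forces $K_{\Yc}^2\leq 0$.
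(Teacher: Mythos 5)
Your proposal is correct and follows essentially the same route as the paper: compute $K_{\Wc}^2=\tfrac14$ and observe that any blowup subtracts $a^2$ with $a=1$ (off $D$) or $a=\tfrac12$ (on $D$), leaving $K_{\Yc}^2\leq 0$, which is incompatible with being almost del Pezzo. Your explicit check that a node of $D$ also gives $a=\tfrac12$ is a small extra care the paper leaves implicit, but it does not change the argument.
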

\begin{proof}
Let $g:\Yc\rightarrow\Wc$ denote a blowup at a point $p$. Then we have the following equations
 \begin{align*}
  p\notin D:~~K_{\Yc}^2&=\left(g^*(K_\Wc)+E\right)^2\\
  &=\dfrac{1}{4}(C_0+F)^2-1<0\\
  p\in D:~~K_{\Yc}^2&=(g^*(K_\Wc)+\dfrac{1}{2}E)^2\\
  &=\dfrac{1}{4}(C_0+F)^2-\dfrac{1}{4}=0
 \end{align*}
 which can not occur for almost del Pezzo surfaces.
\end{proof}

Reviewing calculations in the proof of Theorem \ref{almost del pezzo on F1} we see that if $\Wc$ is a TAdPO over Hirzebruch surface $\Fb_1$ with  ramification divisors $D$, then $C_0$ is a $K_{\Wc}$-zero curve. Therefore contracting $C_0$ gives a blowdown to a CdPO. 

If $D\equiv 3C_0+5F$, then by Lemma \ref{non del pezzo f1 2} there is no blowup to an almost del Pezzo order, meaning that $C_0$ is the only contractible curve. So $\Xc$ is a canonical del Pezzo order over $\Pb^2$ with ramification divisors of degree $5$ and ramification degree $e=2$.

If $D\equiv 2C_0+4F$, it needs a more detailed discussion. We claim that if $\Wc$ is blown up to a TAdPO, then the blowups are at  points of $D$. Otherwise if we blowup a point $p\notin D$ then 
 \begin{align*}
 K_{\Yc}^2&=\left(g^*(K_\Wc)+E\right)^2\\
  &=(-C_0-F)^2-1\\
  &=0,
 \end{align*}
 which is not true for almost del Pezzo orders.

\begin{definition}\label{almost general position F1}
Let $\Sigma=\{p_1, \cdots, p_n\}$ be a set of points in $\Fb_1$. $\Sigma$ is in almost general position if $|\Sigma|<8$ and any irreducible curve of the form $aC_0+bF$ contains no more than $2+a(2b-a-1)$ points of $\Sigma$.
\end{definition}

Now we classify del Pezzo orders with canonical singularities for which the minimal terminal del Pezzo order is over $\Fb_1$ and $D\equiv 2C_0+4F$. In the next two theorems, we  assume that the blowups are done in orders. Namely, the $i$-th blowup is at the point $p_i$ for $1\leq i\leq n$ and $\Sigma\subset D$. Note that if $p_1$ and $p_2$ are infinitely near, then depending on $p_1$ if it is a singular point or not $E_1$ may be a ramification divisor. Since $p_2$ must be in $D$ if $E_1\in D$, then any point on $E_1$ can be blown up, but if $E_1$ is not a ramification divisor, then $p_2$ is the only point of the intersection of $E_1$ and $D$.

\begin{theorem}
Let $\Wc$ be a minimal TAdPO over $\Fb_1$ with ramification divisors $D\equiv 2C_0+4F$. Also Let $g:\Yc\rightarrow\Wc$ be a sequence of blowups at points $\Sigma=\{p_1, \cdots, p_n\}\subset D$. Then $\Yc$ is almost del Pezzo if and only if $n\leq 3$ and $\Sigma$ is in almost general position.
\end{theorem}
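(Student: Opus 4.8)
The plan is to check the two defining properties of an almost del Pezzo order (Definition \ref{almost del pezzo order}) separately: positivity of $K_{\Yc}^2$, which pins down the numerical bound $n\le 3$, and nefness of $-K_{\Yc}$, which pins down almost general position.

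First I would record the numerical data. By Theorem \ref{almost del pezzo on F1} the ramification degree is $e=2$, and since $-K_{\Fb_1}\equiv 2C_0+3F$ while $D\equiv 2C_0+4F$, we have $D\sim -K_{\Fb_1}+F$; that is, in the notation of Proposition \ref{D=Z+M} the effective divisor is $M=F$. As $\Sigma\subset D$, Proposition \ref{D=Z+M} gives, for the blowup $g:Z_{\Yc}\rightarrow\Fb_1$,
\[
K_{\Yc}=\tfrac12\bigl(K_{Z_{\Yc}}+g^*F\bigr),
\]
a clean expression holding uniformly whether the blown-up points are smooth points, nodes, or infinitely near points of $D$. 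Using $K_{Z_{\Yc}}^2=K_{\Fb_1}^2-n=8-n$, $K_{Z_{\Yc}}.g^*F=K_{\Fb_1}.F=-2$, and $(g^*F)^2=0$, this yields $K_{\Yc}^2=\tfrac14(4-n)=1-\tfrac n4$. Hence $K_{\Yc}^2>0$ if and only if $n\le 3$, which in particular forces $|\Sigma|<8$.

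For the nef condition I would invoke Proposition \ref{Y adP M} with $e=2$ and $M=F$. For an effective curve $C\subset Z_{\Yc}$ with $g_*C\equiv aC_0+bF$ one computes $M.g_*C=a$ and $g_*C^2=2ab-a^2$, so the inequality of Proposition \ref{Y adP M} reads
\[
\mathrm{mult}_\Sigma g_*C\le 2-a+(2ab-a^2)=2+a(2b-a-1),
\]
which is exactly the bound appearing in Definition \ref{almost general position F1}. By Remark \ref{effective cone} it suffices to test this on the generators of the effective cone of $Z_{\Yc}$, namely its $(-1)$- and $(-2)$-curves; pushing these forward to $\Fb_1$ produces the irreducible $aC_0+bF$ classes whose multiplicities the almost-general-position condition controls. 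Thus $-K_{\Yc}$ is nef precisely when $\Sigma$ is in almost general position, and combining this with the $K_{\Yc}^2$ computation gives the stated biconditional.

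The main obstacle I expect is the bookkeeping hidden in the last step: one must verify that running over the $(-1)$- and $(-2)$-curve generators really reproduces every irreducible $aC_0+bF$ multiplicity constraint of Definition \ref{almost general position F1}, and in particular that the clause of Proposition \ref{Y adP M} forbidding a blown-up point on a $(-2)$-curve is correctly captured — blowing up a point of a $(-2)$-curve would create a $(-3)$-curve $E'$ with $K_{\Yc}.E'=\tfrac12>0$, hence $-K_{\Yc}.E'<0$, destroying nefness. Handling the infinitely near configurations, whose exceptional trees can contain such $(-2)$-curves, is where the argument demands the most care.
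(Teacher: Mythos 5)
Your proposal is correct and reaches the same numerical facts as the paper, but it organizes the nefness half differently. The $K_{\Yc}^2$ computation is identical in substance: the paper notes $K_{\Wc}^2=(C_0+F)^2=1$ and that each blowup at a point of $D$ (with $e=2$) drops the self-intersection by $\tfrac14$, giving $n<4$, which is exactly your $K_{\Yc}^2=\tfrac14(4-n)$ via Proposition \ref{D=Z+M}. For nefness, however, the paper argues by hand: it writes a general effective curve as $\widetilde{aC_0+bF}+r_1E_1+r_2E_2+r_3E_3$, expands $K_{\Yc}\cdot C$ directly against $g^*(-C_0-F)+\sum a_iE_i$ to get $\leq 0$ under almost general position, and then for the converse exhibits the three explicit failure modes ($\widetilde{C_0}$ when a point of $C_0$ is blown up, $\widetilde{F}$ when a fibre carries multiplicity $\geq 3$, and the $(-3)$-curve created by blowing up a point of a $(-2)$-exceptional curve). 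You instead route both directions through Proposition \ref{Y adP M} with $M=F$, $e=2$, observing that the bound $2-(e-1)M.g_*C+g_*C^2$ evaluates to $2+a(2b-a-1)$ and so reproduces Definition \ref{almost general position F1} verbatim; this is cleaner and actually explains where that formula comes from, and your $(-3)$-curve computation $K_{\Yc}.E'=\tfrac12$ matches the paper's remark. The one thing your route leans on that the paper's direct computation avoids is the sufficiency claim of Remark \ref{effective cone} (checking only generators of the effective cone), which the paper justifies only for genuinely del Pezzo centres; to make your argument airtight you should note that under almost general position $Z_{\Yc}$ is a weak del Pezzo surface of degree $\geq 5$, so its effective cone is still generated by rational curves of non-positive self-intersection and the reduction to generators remains valid. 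This is a fixable bookkeeping point, not a gap in the idea.
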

\begin{proof}
Let $g:\Yc\rightarrow\Wc$ be a sequence of blowups at points $\Sigma=\{p_1, \cdots, p_n\}$. Since all the blowups are at points of $D$ and $e=2$, each blowup reduces the self-intersection $K_{\Wc}$ by $\dfrac{1}{4}$. Moreover, $K_{\Wc}^2=(C_0+F)^2=1$. Then  $K_{\Yc}^2>0$ implies $n<4$.
For the rest of the proof we first show that if $\Sigma$ is in almost general position, then $\Yc$ is almost del Pezzo. And then we show that if $\Sigma$ is not in almost general position, then $\Yc$ is not almost del Pezzo.
Let $C=\widetilde{aC_0+bF}+r_1E_1+r_2E_2+r_3E_3$ be an effective curve in $Z_{\Yc}$, where $r_2$ and $r_3$ can be zero depending on the number of blowups $n$. Then
\[
K_{\Yc}=g^*(-C_0-F)+a_1E_1+a_2E_2+a_3E_3,
\]
where $a_i\in\left\{\dfrac{1}{2},1,\dfrac{3}{2}\right\}$ depend on the blowups and the tree of exceptional curves.
\begin{align*}
K_{\Yc}.C&=(g^*(-C_0-F)+a_1E_1+a_2E_2+a_3E_3).C\\
&=(-C_0-F)(aC_0+bF)+(a_1E_1+a_2E_2+a_3E_3)(aC_0+bF)\\
&+\underbrace{\sum_1^3a_iE_i.\sum_1^3r_iE_i}_{\leq0}\\
&\leq-b+b\underbrace{(a_1E_1+a_2E_2+a_3E_3).F}_{\leq1}\\
&\leq 0.
\end{align*}
Now if $\Sigma$ is not in almost general position, then at least one of the three conditions fails. If $g:\Yc\rightarrow\Wc$ denotes a blowup at a point $p\in C_0$. Then
 \begin{align*}
 K_{\Yc}\tilde{C_0}&=\left(g^*(K_\Wc)+\dfrac{1}{2}E\right)\tilde{C_0}\\
 &=g^*(K_\Wc).\tilde{C_0}+\dfrac{1}{2}E.\tilde{C_0}\\
 &=(-C_0-F).C_0+\dfrac{1}{2}\\
 &=\dfrac{1}{2}.
 \end{align*}
Which is against the definition of almost del Pezzo surfaces.

If there is a fibre $F$ with multiplicity more than $2$ at $\Sigma$, then
 \begin{align*}
 K_{\Yc}\tilde{F}&=\left(g^*(K_\Wc)+\sum_ia_iE_i\right)\tilde{F}\\
 &=g^*(K_\Wc).\tilde{F}+\left(\sum_ia_iE_i\right)\tilde{F}\\
 &\geq(-C_0-F).F+\dfrac{3}{2}\\
 &=\dfrac{1}{2}.
 \end{align*}
 And finally, there is no blowup at a point of a $(-2)$-exceptional curve for the obvious reason that there is no $(-3)$-curve in the resolution of canonical orders.
 
%\begin{centering}
%\begin{tikzpicture}[xshift=1cm]
%\node (1) at (0,1.1) {\tiny $E_1^2=-3$};
%\node (1) at (2.6,1.4) {\tiny $E_3^2=-1$};
%\node (1) at (4.6,1) {\tiny $E_2^2=-2$};
%\draw [black]  (-0.5,0) -- (2,2); 
%\draw [black] (1,2) -- (3.5,0);
%draw [black] (2.5,0) -- (5,2);
%\node (1) at (8,1.1) {\tiny $E_3^2=-1$};
%\node (1) at (10.6,1.4) {\tiny $E_1^2=-3$};
%\node (1) at (12.6,1) {\tiny $E_2^2=-1$};
%\draw [black, xshift=8cm]  (-0.5,0) -- (2,2); 
%\draw [black, xshift=8cm] (1,2) -- (3.5,0);
%\draw [black, xshift=8cm] (2.5,0) -- (5,2);
%\node (1) at (2.6,-1) {$(a_1,a_2,a_3)=(\dfrac{1}{2},1,2)$};
%\node (1) at (10.6,-1) {$(a_1,a_2,a_3)=(\dfrac{1}{2},1,1)$};
%\end{tikzpicture}
%\end{centering}
%Then
%\begin{align*}
% K_{\Yc}E_1&=\left(g^*(K_\Wc)+\sum_ia_iE_i\right)E_1\\
 %&=g^*(K_\Wc).E_1+\left(\sum_ia_iE_i\right)E_1\\
% &= 0+\dfrac{1}{2}\\
 %&=\dfrac{1}{2}.
 %\end{align*}
 \end{proof}
\begin{theorem}\label{canonical del Pezzo orders by F1}
Let $\Wc$ be a minimal TAdPO over $\Fb_1$ with ramification divisors $D\equiv 2C_0+4F$. Also Let $g:\Yc\rightarrow\Wc$ be a sequence of blowups at points $\Sigma=\{p_1, \cdots, p_n\}$ in almost general position. Each of the following is a $K_{\Yc}$-zero curve $E$ such that if $f:\Yc\rightarrow\Xc$ contracts $E$, then $\Xc$ is a CdPO.
\begin{enumerate}
\item
The section $C_0$.
\item
Any fibre $\tilde{F}$, where multiplicity of $F$ at $\Sigma$ is $2$.
\item
An exceptional curve $E$, where $E^2=-2$.
\end{enumerate}
\end{theorem}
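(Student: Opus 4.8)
The plan is to reduce the whole statement to the single identity $K_{\Yc}=\tfrac12\bigl(K_{Z_{\Yc}}+g^{*}F\bigr)$ and then test each candidate curve against $K_{\Yc}$ by elementary intersection theory. Since $-K_{\Fb_1}\equiv 2C_0+3F$ and $D\equiv 2C_0+4F$, we have $D\sim -K_{\Fb_1}+M$ with the effective divisor $M=F$; as every blowup is at a point of $D$ and $e=2$, Proposition \ref{D=Z+M} gives exactly this expression for $K_{\Yc}$. I would record this first, and also note that the bound $2+a(2b-a-1)$ of Definition \ref{almost general position F1} specialises at $(a,b)=(1,0)$ to $0$, so almost general position forbids blowing up any point of $C_0$; hence $\tilde{C_0}=g^{*}C_0$ with $\tilde{C_0}^{2}=-1$.

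With the formula in hand the three verifications are short. For the section, $K_{\Yc}.\tilde{C_0}=\tfrac12\bigl(K_{\Fb_1}.C_0+F.C_0\bigr)=\tfrac12(-1+1)=0$. For a fibre meeting $\Sigma$ with multiplicity $2$ one has $\tilde{F}=g^{*}F-E_i-E_j$, so $\tilde{F}^{2}=-2$, and then the genus formula gives $K_{Z_{\Yc}}.\tilde{F}=-2-\tilde{F}^{2}=0$ while $g^{*}F.\tilde{F}=F.g_{*}\tilde{F}=F^{2}=0$, whence $K_{\Yc}.\tilde{F}=0$. For a $(-2)$-exceptional curve $E$ we have $g_{*}E=0$, so $g^{*}F.E=0$, and the genus formula again gives $K_{Z_{\Yc}}.E=-2-E^{2}=0$, so $K_{\Yc}.E=0$. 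In every case $E^{2}<0$ (namely $-1$ for $C_0$ and $-2$ otherwise), so $E$ lies in the exceptional locus of the anticanonical morphism and is contractible.

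To conclude that the contraction produces a CdPO I would argue as follows. Because $\Yc$ is almost del Pezzo (by the preceding theorem, $\Sigma$ being in almost general position with $n\leq 3$), $-K_{\Yc}$ is nef and big, hence semiample by the proposition characterising almost del Pezzo orders; the associated anticanonical morphism $f:\Yc\to\Xc$ then exists, contracts precisely the $K_{\Yc}$-trivial curves, and makes $-K_{\Xc}$ ample with $K_{\Xc}^{2}=K_{\Yc}^{2}>0$. Since $K_{\Yc}.E=0$ on every contracted curve, $f$ is crepant, i.e.\ $K_{\Yc}=f^{*}K_{\Xc}$; as $\Yc$ is terminal this exhibits $f$ as the minimal resolution of $\Xc$, so Proposition \ref{canonical bundle of resolution} read backwards shows $\Xc$ has no worse than canonical singularities. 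Thus $\Xc$ is a CdPO, and the contracted curves are exactly those of the three listed types.

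The hard part will be this last paragraph rather than the arithmetic: one must be sure that contracting the $K_{\Yc}$-trivial locus is a legitimate contraction of orders, that no $K$-trivial curve survives so that $-K_{\Xc}$ is genuinely ample, and that the resulting singularities are canonical. The first two points rest on semiampleness of $-K_{\Yc}$, which both guarantees the anticanonical model and pins down the contracted locus, while the third follows from Proposition \ref{canonical bundle of resolution} together with the remark that contracting a $(-2)$-curve introduces only an $A_1$ du Val point, which is canonical; so the second and third cases do not worsen the singularities.
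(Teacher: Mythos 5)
Your proposal is correct and follows essentially the same route as the paper: both reduce to the identity $K_{\Yc}=\tfrac12\left(K_{Z_{\Yc}}+g^{*}F\right)$ (Proposition \ref{Y adP M}/\ref{D=Z+M} with $M=F$, $e=2$) and verify each of the three curves by the same intersection arithmetic, the paper citing Proposition \ref{Y adP M} for cases 1 and 2 and computing case 3 directly. Your final paragraph on why the crepant contraction of the $K_{\Yc}$-trivial locus yields a canonical del Pezzo order is not in the paper's proof of this theorem but correctly supplies what the paper delegates to its general framework (the discussion around Figure \ref{resolution diagram for orders} and Proposition \ref{canonical bundle of resolution}).
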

\begin{proof}
Cases \textit{1} and \textit{2} follow from Proposition \ref{Y adP M} with $M=F$.
For case \textit{3}  let $E_1$ and $E_2$ be a tree of exceptional curves where  $E_1^2=-2$. Then
\begin{align*}
K_{\Yc}.E_1&=\left(g^*(-C_0-F)+\dfrac{1}{2}E_1+E_2\right).E_1\\
 &=0+\dfrac{1}{2}(-2)+1=0
\end{align*}
\end{proof}
\subsubsection*{The cases $Z=\Fb_2$}
Let $\Wc$ be a a TAdPO over the surface $\Fb_2$ with  ramification divisors  $D$. By Theorem \ref{almost del pezzo on F2},  $D\equiv 2C_0+4F$ or $D\equiv 3C_0+6F$ and the ramification degrees are all equal to $e$ and further in the second case  $e=2$. Then we have the following equations for the canonical divisors.
\begin{align}\label{2*}
 D\equiv 2C_0+4F:~~&\left\{\begin{array}{ll}K_{\Wc}&=-\dfrac{1}{e}\left(2C_0+4F\right)\\
 K_{\Wc}^2&=\dfrac{8}{e^2}\\\end{array}\right.
\end{align}

\begin{align}\label{3*}
 D\equiv 3C_0+6F:~~&\left\{\begin{array}{ll}K_{\Wc}&=-\dfrac{1}{2}C_0-F\\	
 K_{\Wc}^2&=\dfrac{1}{2}\\\end{array}\right.
\end{align}
We want to know how many and what types of blowups give a $K_{\Yc}$-zero curve where we denote the sequence of blowups by $f:\Yc\rightarrow\Wc$. We start to classify the case $D\equiv 3C_0+6F$ as it is very restrictive. By  Equation \ref{3*}  we know only one single blowup keeps the order almost del Pezzo and it has to be at a point $p\in D$. By calculations in the proof of Theorem \ref{almost del pezzo on F2}  we know that $K_{\Yc}.C_0=0$ and so $p\notin C_0$. Let $C=\widetilde{bF}+rE$ be an effective curve in $Z_{\Yc}$. Then
\begin{align*}
K_{\Yc}C&=\left(f^*\left(-\dfrac{1}{2}C_0-F\right)+\dfrac{1}{2}E\right)C\\
&=f^*\left(-\dfrac{1}{2}C_0-F\right).C+\dfrac{1}{2}E.C\\
&=\left(-\dfrac{1}{2}C_0-F\right).bF+\dfrac{1}{2}E.(\widetilde{bF}+rE)\\
&=-\dfrac{b}{2}+\dfrac{b}{2}-\dfrac{r}{2}.
\end{align*}
So if $g:\Yc\rightarrow\Xc$ contracts $C_0$ or the fibre $F$ where the blowup is at a point $p\in F$, then $\Xc$ is a CdPO.

The classification of minimal TAdPOs over $\Fb_2$ with ramification divisors $D\equiv 2C_0+4F$ is more enormous. This is actually in two extents, a blowup at a point out of $D$ and on the other hand, more blowups at points in $D$ keep the order almost del Pezzo.

\begin{definition}\label{Z2 Y adp}
Let $\Sigma=\{p_i\}_i$ be a set of points in $\Fb_2$. $\Sigma$ is in almost general position if $|\Sigma|<8$ and no more than $2+a(2b-2a)$ lie on an irreducible curve of the form $aC_0+bF$.
\end{definition}

\begin{proposition}\label{almost general theorem F2}
Let $\Wc$ be a minimal TAdPO over $\Fb_2$ with ramification divisor $D\equiv 2C_0+4F$. Also let $f:\Yc\rightarrow \Wc$ represent a sequence of blowups at the points $\Sigma=\{p_i\}_i$. Then $\Yc$ is almost del Pezzo if and only if the points are in almost general position and one of the followings occurs
\begin{enumerate}
\item
If there is a blowup at a point $p_i\notin D$, then $n\leq 4$ and all other points are in $D$, and  non of them should lie on the same fibre as the one $p_i$ does.
\item
$n\leq7$ and  $\Sigma\subset D$.
\end{enumerate}
\end{proposition}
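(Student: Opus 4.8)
The plan is to prove both implications by splitting the ``almost del Pezzo'' condition into its two numerical-geometric halves — bigness ($K_{\Yc}^2>0$) and nefness ($-K_{\Yc}.C\geq0$ for all effective $C$) — and exploiting that $D\equiv 2C_0+4F=-K_{\Fb_2}$ is \emph{anti-canonical}. Because the ramification is anti-canonical with all degrees equal to $e$, we have $K_{\Wc}=\frac{1}{e}K_{\Fb_2}$, so by Proposition \ref{Z iff X} a blowup at a point of $D$ keeps the ramification anti-canonical ($M=0$), whereas a blowup at a point $p\notin D$ produces $M=E_p$ in the sense of Proposition \ref{D=Z+M}. This lets me treat the in-$D$ points with the clean anti-canonical machinery and isolate the single off-$D$ point separately.

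First I would pin down the number and type of blowups using Proposition \ref{decrease of K_X^2}. Starting from $K_{\Wc}^2=8/e^2$, a blowup at a smooth point of $D$ lowers $K^2$ by $1/e^2$ (by Lemma \ref{coefficient of E}), while a blowup off $D$ lowers it by $1$. If every $p_i\in D$, then $K_{\Yc}^2=(8-n)/e^2$, positive exactly when $n\leq 7$, giving case 2. If some $p_i\notin D$, then $K_{\Yc}^2\leq 8/e^2-1-(n-1)/e^2=(9-n)/e^2-1$, and positivity forces $e=2$ and $n\leq 4$; moreover a second point off $D$ would give $8/e^2-2>0$, hence $e^2<4$, which is impossible for $e\geq2$. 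This establishes that at most one point lies off $D$ and yields the bounds $n\leq4$ (case 1) and $n\leq7$ (case 2).

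Second, for nefness I would invoke the almost-del-Pezzo criterion of Proposition \ref{Y adP M}. For the sub-configuration lying in $D$ one has $M=0$, so the criterion reads $\operatorname{mult}_\Sigma f_*C\leq 2+(f_*C)^2$ for every effective $C$; writing $f_*C\equiv aC_0+bF$ and using $(f_*C)^2=2a(b-a)$ turns this into $\operatorname{mult}_\Sigma f_*C\leq 2+a(2b-2a)$, which is exactly Definition \ref{Z2 Y adp} together with the prohibition on blowing up points of $(-2)$-curves, and by Remark \ref{effective cone} it suffices to test the finitely many generators of the effective cone. This settles case 2 in both directions. For case 1 the only extra curve to control is the fibre $F_p$ through the off-$D$ point, and I would compute directly $K_{\Yc}.\tilde{F}_p=\frac{1}{e}K_{\Fb_2}.F+E_p.\tilde{F}_p+\frac{1}{e}\sum_{q_i\in F_p}E_{q_i}.\tilde{F}_p=-\frac{2}{e}+1+\frac{m}{e}$, where $m$ is the number of the remaining in-$D$ points on $F_p$. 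With $e=2$ this equals $m/2$, so nefness forces $m=0$ — precisely the requirement that no other point lie on the fibre through $p$ — and in that case $\tilde{F}_p$ is the desired $K_{\Yc}$-zero curve.

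The main obstacle I anticipate is the bookkeeping for infinitely near points. I must confirm that $K_{\Yc}^2=K_{\Wc}^2-\sum a_i^2$ and the multiplicity bound of Proposition \ref{Y adP M} remain valid when points are infinitely near, using the correct coefficients $a_i=1/(ne)$ at singular points of $D$ from Lemma \ref{coefficient of E}, and that ``counting multiplicities'' in Definition \ref{Z2 Y adp} faithfully encodes the self-intersection drop $\tilde{C}^2=(f_*C)^2-\operatorname{mult}_\Sigma f_*C\geq -2$ of the strict transforms, so that no curve worse than a $(-2)$-curve appears. Once this is checked, the remaining verification on the effective-cone generators is routine.
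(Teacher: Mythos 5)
Your overall architecture is the same as the paper's: you use Proposition \ref{decrease of K_X^2} together with $K_{\Wc}^2=8/e^2$ to force $e=2$ and at most one point off $D$, and to get the bounds $n\leq4$ (case 1) and $n\leq7$ (case 2); you reduce case 2 to Proposition \ref{Y adP M} with $M=0$, which reproduces Definition \ref{Z2 Y adp}; and your computation $K_{\Yc}.\tilde F_p=-\tfrac2e+1+\tfrac me$ is exactly the paper's fibre calculation showing that no in-$D$ point may share a fibre with the off-$D$ point. So the necessity direction and all of case 2 match the paper step for step.

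The gap is in the sufficiency direction of case 1, where you assert that ``the only extra curve to control is the fibre $F_p$ through the off-$D$ point.'' This is not justified: every irreducible curve through $p$ acquires the full contribution $E_p.\tilde C=\operatorname{mult}_p(f_*C)$ rather than the $\tfrac1e$-weighted contribution coming from the in-$D$ exceptional divisors, so each such curve must be re-examined. Concretely, for $C$ irreducible with $f_*C\equiv aC_0+bF$, multiplicity $r_p$ at $p$ and multiplicities $r_i$ at the in-$D$ points, one gets
\[
K_{\Yc}.\tilde C=-b+r_p+\tfrac12\textstyle\sum_i r_i,
\]
and for $a=1$, $b=2$ a section through $p$ and three in-$D$ points --- a configuration permitted by Definition \ref{Z2 Y adp}, which allows $2+1\cdot(4-2)=4$ points on such a curve, and compatible with your fibre condition --- yields $-2+1+\tfrac32=\tfrac12>0$. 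The paper's own proof does attempt to treat general $(a,b)$-curves, by splitting $\widetilde{aC_0+bF}$ into $\widetilde{aC_0}+\widetilde{bF}$ (itself a delicate step for irreducible curves), but your proposal omits this check entirely. You would need either to verify nefness on all generators of the effective cone of $Z_{\Yc}$ passing through $p$, or to tighten the general-position hypothesis so as to exclude configurations like the one above. Note also that Proposition \ref{Y adP M} is stated only for $\Sigma\subset D$, so your appeal to it in case 1 requires the $M=E_p$ extension you sketch to be carried out explicitly rather than cited.
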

\begin{proof}
Let $\Yc$ be almost del Pezzo. Use the fact that the centre of $\Yc$ is almost del Pezzo to make sure that $\Sigma$ satisfies the conditions of Definition \ref{Z2 Y adp}. By Equation \ref{2*} we know that if there is any blowup at a point out of $D$, then $e=2$ and therefore $K_{\Wc}^2=2$. Further each blowup at a point out of $D$ reduces the self-intersection of the order  by one, thus only one  point $p_i$ can lie out of $D$. Then after blowing up a point out of $D$, there can be at most  three more blowups and they are all at points of $D$ as each such blowup reduces the self-intersection of the order  by $\dfrac{1}{4}$.

Let $g:\Yc\rightarrow \Wc$ represent a sequence of blowups at  points $p_1\notin D$ and $\{p_2, p_3, p_4\}\subset D$ with the exceptional curves $\{E_i\}$ respecting indices. If by contradiction $p_1$ and $p_2\in F$ for some fibre $F$, then
\begin{align*}
K_{\Yc}.\tilde{F}&=\left(f^*\left(-C_0-2F\right)+E_1+\dfrac{1}{2}\sum_{i=1}^3a_iE_i\right)\tilde{F}\\
&=-1+E_1\tilde{F}+\dfrac{1}{2}\underbrace{\sum_{i=1}^3a_iE_i\tilde{F}}_{>0}>0,
\end{align*}
which is a contradiction. 

Now let $\Sigma$ be in almost general position and case $1$ occurs. Then obviously $K_{\Yc}^2>0$. Further let $C$ be a curve in $Z_{\Yc}$ if $C=E_j$ for some $j$, then
\begin{align*}
K_{\Yc}.E_j&=\left(f^*\left(-C_0-2F\right)+E_1+\dfrac{1}{2}\sum_{i=1}^3a_iE_i\right)E_j\\
&=\left(E_1+\dfrac{1}{2}\sum_{i=1}^3a_iE_i\right)E_j<0.
\end{align*}
If $C=\widetilde{aC_0+bF}$ for non-negative $a$ and $b$  and some fibre $F$, then
\begin{align*}
K_{\Yc}.(\widetilde{aC_0+bF})&=\left(f^*\left(-C_0-2F\right)+E_1+\dfrac{1}{2}\sum_{i=1}^3a_iE_i\right)(\widetilde{aC_0+bF})\\
&=\underbrace{\left(f^*\left(-C_0-2F\right)+E_1+\dfrac{1}{2}\sum_{i=1}^3a_iE_i\right)(\widetilde{aC_0})}_{=0}\\
&+\underbrace{\left(f^*\left(-C_0-2F\right)+E_1+\dfrac{1}{2}\sum_{i=1}^3a_iE_i\right)(\widetilde{bF})}_{<0}<0.
\end{align*}
Finally case 2, where $\Sigma\subset D$, follows from Proposition \ref{Y adP M} with $M=0$.
\end{proof}

 The classification for $K_{\Yc}$-zero curves is as the following.
\begin{theorem}\label{canonical del Pezzo orders by F2}
 Let $\Wc$ be a minimal TAdPO over $\Fb_2$ with ramification divisor $D\equiv 2C_0+4F$ and ramification degree $e$. Also let $g:\Yc\rightarrow \Wc$ represent the blowups of $\Wc$. Then each of the followings gives  a $K_{\Yc}$-zero curve $E$ such that if $f:\Yc\rightarrow\Xc$ contracts $E$, then the order $\Xc$ is a CdPO.
\begin{enumerate}
 \item 
  The section $C_0$
 \item
  Blowing up  points $p\notin D$, $E:=F$ where $F$ is the fibre passing $p$.
  \item
  Blowing up  a set of points $\Sigma=\{p_1,\cdots, p_n\}\subset D$ in almost general position where $n\leq7$,  $E:=F$ is any fibre with multiplicity $2$ at $\Sigma$.
 \item
 $E:=E_j$ where $E_j$ is $(-2)$-exceptional curve.
 \end{enumerate}
 Note that all the blowups are assumed to satisfy conditions of Proposition \ref{almost general theorem F2}.
\end{theorem}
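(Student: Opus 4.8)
The plan is to reduce all four cases to a single structural observation and then handle the one exceptional case by direct computation. Since $K_{\Fb_2}\equiv -2C_0-4F$, the ramification divisor $D\equiv 2C_0+4F$ is anti-canonical, so in the notation of Proposition \ref{D=Z+M} we have $M=0$ and $K_{\Wc}=\tfrac1e K_Z$. Consequently, for any blowup $g:\Yc\to\Wc$ at points of $D$, Proposition \ref{Z iff X a} identifies the $K_{\Yc}$-zero curves with the $(-2)$-curves on $Z_{\Yc}$, while Proposition \ref{Y adP M} (with $M=0$) gives the numerical reformulation: an effective curve $C$ is $K_{\Yc}$-zero precisely when $f_*C$ passes through exactly $2+f_*C^2$ of the blown-up points. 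I would first record these two reductions, then verify that each listed $E$ is either a $(-2)$-curve or satisfies $K_{\Yc}.E=0$ directly.

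For the three cases where all blowups lie on $D$ (Cases 1, 3, 4), I would argue as follows. In Case 1, $C_0^2=-2$ and $C_0.D=C_0.(2C_0+4F)=-4+4=0$, so $C_0$ is disjoint from $D$; since Proposition \ref{Y adP M} forbids blowing up points of a $(-2)$-curve, $C_0$ survives as a $(-2)$-curve on $Z_{\Yc}$ and is therefore $K_{\Yc}$-zero. In Case 3, a fibre $F$ of multiplicity $2$ at $\Sigma$ has proper transform $\tilde F=f^*F-E_i-E_j$ with $\tilde F^2=F^2-2=-2$, again a $(-2)$-curve; equivalently this is the instance $f_*C=F$, $f_*C^2=0$ of the criterion $2+f_*C^2=2$. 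Case 4 is immediate from Proposition \ref{Z iff X a}, which identifies the $(-2)$-exceptional curves as $K_{\Yc}$-zero.

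Case 2 is the one case outside the anti-canonical framework, because the point $p\notin D$ is blown up with discrepancy $a=1$ (Lemma \ref{coefficient of E}) and $e=2$ is forced by Proposition \ref{almost general theorem F2}. Here I would compute directly. Writing $K_{\Yc}=f^*(-C_0-2F)+E_1+\tfrac12\sum_{i\ge 2}a_iE_i$ with $E_1$ over $p$ and the remaining exceptional curves over points of $D$ on other fibres, and $\tilde F=f^*F-E_1$, I obtain
\begin{align*}
K_{\Yc}.\tilde F&=(-C_0-2F).F+E_1.(f^*F-E_1)+\tfrac12\sum_{i\ge 2}a_iE_i.(f^*F-E_1)\\
&=-1+1+0=0,
\end{align*}
where the last sum vanishes because $p$ shares no fibre with the other blown-up points (the hypothesis of Proposition \ref{almost general theorem F2}). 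Thus $\tilde F$ is $K_{\Yc}$-zero even though $\tilde F^2=-1$ on $Z_{\Yc}$.

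Finally, in each case the passage to a CdPO is the same. Since $\Yc$ is terminal almost del Pezzo by Proposition \ref{almost general theorem F2}, $-K_{\Yc}$ is nef and big, and contracting a $K_{\Yc}$-zero curve $E$ produces $f:\Yc\to\Xc$ with $K_{\Yc}=f^*K_{\Xc}$ by Proposition \ref{canonical bundle of resolution}, so $\Xc$ has canonical singularities and $(-K_{\Xc})^2=(-K_{\Yc})^2>0$. The hard part is the ampleness of $-K_{\Xc}$: nefness and bigness are inherited, but the del Pezzo property requires $-K_{\Xc}.C'>0$ for every curve $C'\subset Z_{\Xc}$, which fails exactly if some $K_{\Yc}$-zero curve other than $E$ survives the contraction. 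Hence the real content is that the four items exhaust all $K_{\Yc}$-zero curves in the given configuration, so that contracting them realizes the anticanonical model; I expect this completeness claim — tying the list to the generators of the effective cone via Remark \ref{effective cone} — to be the main obstacle, rather than the individual intersection computations.
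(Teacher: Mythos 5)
Your verification matches the paper's own proof essentially step for step: Cases 1, 3 and 4 reduce to Proposition \ref{Z iff X a} (anti-canonical ramification, so $K_{\Yc}$-zero curves are exactly the $(-2)$-curves of $Z_{\Yc}$), and Case 2 is the same direct computation $K_{\Yc}.\tilde F=-1+E_1.\tilde F=0$; the only slip is the aside that $C_0.D=0$ makes $C_0$ ``disjoint from $D$'' --- when $C_0$ is itself a ramified component (case 1 of Proposition \ref{F_2 3 cases}) it meets the rest of $D$ in two points, but your operative argument (no point of a $(-2)$-curve is blown up, so $C_0$ survives as a $(-2)$-curve) is unaffected. The ampleness concern you raise at the end --- that contracting a single $K_{\Yc}$-zero curve gives an ample $-K_{\Xc}$ only if no other $K_{\Yc}$-zero curve survives --- is a fair observation, but the paper's proof does not address it either and consists solely of the intersection computations, so on the ground the paper actually covers your argument is complete and equivalent.
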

\begin{proof}
Cases 1 and 3 follow from Proposition \ref{Z iff X a}. Case 2 is as follows.
\begin{align*}
K_{\Yc}.\tilde{F}&=\left(f^*\left(-C_0-2F\right)+E_1\right)\tilde{F}\\
&=-1+E_1\tilde{F}=0.
\end{align*}
For Case 4 let $f:\Yc\rightarrow\Wc$ represent two blowups at a point $p$. The $p$ has to be in $D$. Let $E_1$ and $E_2$ be the corresponding blowups where $E_1^2=-1$ and $E_2^2=-2$. Then
\begin{align*}
K_{\Yc}.E_2&=\left(f^*\left(-C_0-2F\right)+\frac{2}{e}E_1+\dfrac{1}{e}E_2\right)E_2=0
\end{align*}
\end{proof}
%%%%%%%%%%%%%%%%%%%%%%%%%%%%%%%%%%%%%%%%%%%%%%%%%%

\bibliographystyle{amsplain}
%\bibliography{refrences}{}

\end{document}